\documentclass[10pt]{amsart}

\usepackage{amsfonts,amssymb,amsmath,amscd,amstext}
\usepackage[colorlinks=true,linkcolor=blue,citecolor=blue]{hyperref}
\usepackage[utf8]{inputenc}
\usepackage{microtype}
\usepackage{graphicx}
\usepackage{changes}
\usepackage{comment}
\usepackage{mathtools}
\usepackage[a4paper,top=2.5cm,bottom=2.5cm,left=2.5cm,right=2.5cm]{geometry}
\renewcommand{\leq}{\leqslant}
\renewcommand{\geq}{\geqslant}
\renewcommand{\le}{\leqslant}
\renewcommand{\ge}{\geqslant}

\newcommand{\rr}{{\mathbb{R}}}
\newcommand{\G}{{\mathbb{G}}}

\newcommand{\nn}{{\mathbb{N}}}

\newcommand{\Om}{\Omega}
\newcommand{\eps}{\varepsilon}

\renewcommand{\d}{d_{\mathfrak g}}
\newcommand{\ga}{\gamma}

\newcommand{\loc}{\rm loc}
\newcommand{\sub}{\partial_{X,N}u}

\newcommand{\ol}{d_{\sigma^\star}}
\newcommand{\olk}{d_{\sigma_K^\star}}
\newcommand{\otk}{d_{\sigma_{\tilde K}^\star}}
\newcommand{\oth}{d_{\sigma_{\tilde H}^\star}}
\newcommand{\sk}{\sigma_K^\star}

\newcommand{\escpr}[1]{\langle#1\rangle}

\newcommand{\average}{{\mathchoice {\kern1ex\vcenter{\hrule height.4pt
				width 6pt
				depth0pt} \kern-9.7pt} {\kern1ex\vcenter{\hrule height.4pt width 4.3pt
				depth0pt}
			\kern-7pt} {} {} }}
\newcommand{\ave}{\average\int}

\DeclareMathOperator{\lip}{Lip}
\newtheorem{theorem}{Theorem}[section]
\newtheorem{proposition}[theorem]{Proposition}
\newtheorem{lemma}[theorem]{Lemma}

\theoremstyle{definition}

\newtheorem{remark}[theorem]{Remark}

\newtheorem{definition}[theorem]{Definition} 

\theoremstyle{remark}

\numberwithin{equation}{section}

\definechangesauthor[name=Gianmarco, color=brown]{G}
\definechangesauthor[name=Simone, color=purple]{S}
\definechangesauthor[name=Fares, color=blue]{F}

\thanks{Keywords. Hamilton-Jacobi equations, Carnot groups, Monge solutions, discontinuous Hamiltonians.\\
\indent MSC. 35F21, 35R03}
\begin{document}

\title[Monge solutions for discontinuous Hamilton--Jacobi equations in Carnot groups]{Monge solutions for discontinuous Hamilton-Jacobi equations in Carnot groups}

\author[F. Essebei]{Fares Essebei}
\address[Fares Essebei]{ 
Dipartimento di Matematica, 
Universit\`a di Pisa, 
Largo Bruno Pontecorvo 5, 56127 Pisa, Italy }
\email[Fares Essebei]{fares.essebei@dm.unipi.it}
\author[G. Giovannardi]{Gianmarco Giovannardi}
\address[Gianmarco Giovannardi]{Dipartimento di Matematica e Informatica "U. Dini", Università degli Studi di Firenze, Viale Morgani 67/A, 50134 Firenze, Italy}
\email[Gianmarco Giovannardi]{gianmarco.giovannardi@unifi.it}
\author[S. Verzellesi]{Simone Verzellesi}
\address[Simone Verzellesi]{Department of Mathematics, University of Trento, Via Sommarive 14, 38123 Povo (Trento), Italy}
\email[Simone Verzellesi]{simone.verzellesi@unitn.it}

\thanks{G. Giovannardi has been supported by INdAM–GNAMPA 2023 Project \emph{Variational and non-variational problems with lack of compactness}.
S. Verzellesi has been supported by INdAM–GNAMPA 2023 Project \emph{Equazioni differenziali alle derivate parziali di tipo misto o dipendenti
da campi di vettori}.}

\maketitle

\begin{abstract}

  In this paper we study  Monge solutions to  stationary   Hamilton--Jacobi equations associated to discontinuous Hamiltonians in the framework of Carnot groups. After showing the equivalence between Monge and viscosity solutions in the continuous setting, we prove existence and uniqueness for the Dirichlet problem, together with a comparison principle and a stability result.
\end{abstract}
\section{Introduction}
The purpose of this work is to lay out the framework for the study of discontinuous Hamilton-Jacobi equations in the sub-Riemannian setting of Carnot groups. In this regard, we generalise the Euclidean theory by addressing the major challenges implied by the degenerate structure that characterises sub-Riemannian geometry. The study of Hamilton--Jacobi equations plays an important role in modern analysis, and its applications are related to many research areas, e.g. control theory and mathematical physics. The interested reader can find complete surveys of this topic in the monographs \cite{lions,bardicapuzzo,sine}.
The prototypical stationary Hamilton--Jacobi equation is the so-called \emph{eikonal equation}, that is 
\begin{equation}\label{ee}
    |\nabla u|=f(x)
\end{equation}
on $\Om$, where $\Om$ is a domain in $\rr^n$ and $f$ is a continuous function. The study of this kind of equations is typically carried out in the setting of \emph{viscosity solutions} (cf. \cite{CL,users}). Thanks to the effort of many authors (cf. \cite{CL,lions,CraEvaLi,BARDI,bp90,bj90,bcess,ds06} and references therein), problem \eqref{ee} has been generalized by considering first-order differential equations of the general form
\begin{equation*}
    H(x,u,\nabla u)=0
\end{equation*}
on $\Om$, together with their evolutionary counterparts. Here $H$, the so called \emph{Hamiltonian}, is a continuous function which usually satisfies suitable convexity and coercivity properties. A further step has been made by taking into account the case in which the Hamiltonian is not assumed to be continuous (cf. \cite{i85,NewSu,s02,cs03,monge}). In all these papers the authors had to adapt the definition of viscosity solutions taking into account the new measurable setting. In particular, in \cite{NewSu} the authors introduced the notion of \emph{Monge solution} to the eikonal-type equation
\begin{equation*}
    H(\nabla u)=n(x)
\end{equation*}
on $\Om$, where $H$ is convex and continuous and $n$ is lower semicontinuous. The importance of this notion, which is shown by the authors to be equivalent to the viscosity one when $n$ is continuous, is motivated by the fact that the classical \emph{Hopf--Lax formula} (cf. \cite{lions}) does not provide in general a viscosity solution if $n$ is only lower semicontinuous. On the other hand, the setting of Monge solutions is shown to be the right one to establish existence, uniqueness, comparison and stability results. The results in \cite{NewSu} have been later generalized in \cite{monge}, where the authors extended the notion of Monge solution to discontinuous Hamilton--Jacobi equations of the form
\begin{equation}\label{euclhjdavbri}
    H(x,\nabla u)=0
\end{equation}
on $\Om$. Here $H$ is only assumed to be Borel measurable, together with some mild assumptions in the gradient variable. In particular, a crucial hypothesis in \cite{monge} consists in requiring that there exists a positive constant $\beta>0$ such that
\begin{equation}\label{firstcoer}
  H(x,p)\leq 0\quad\implies\quad|p|\leq \beta
\end{equation}
for any $x\in\Om$ and any $p\in\rr^n$. This condition, which can be seen as a weak coercivity requirement, turns out to be fundamental even in the classical viscosity approach. However, as it is well known (cf. e.g. \cite{soravia}), there are many interesting situations in which \eqref{firstcoer} fails. As a significant instance, one can consider the eikonal-type equation
\begin{equation}\label{firsteiksub}
    |\nabla u\cdot C(x)^T|= 1
\end{equation}
on $\Om\subseteq \rr^3$, where $C(x)$ is a $2\times 3$ matrix whose rows encodes the coefficients of the vector fields
\begin{equation*}
    X_1|_q=\frac{\partial}{\partial x_1}+x_2\frac{\partial}{\partial t}\qquad\text{and}\qquad X_2|_q=\frac{\partial}{\partial x_2}-x_1\frac{\partial}{\partial t},
\end{equation*}
where we denoted points $q\in\rr^3$ by $q=(x_1,x_2,t)$. Being the kernel of $C(x)^T$ non-trivial, it is easy to notice that the Hamiltonian associated to \eqref{firsteiksub} does not satisfy \eqref{firstcoer}. A standard approach to overcome this difficulty consists in changing the underlying geometry of the ambient space. Indeed, $X_1$ and $X_2$ make up a particular example of generating vector fields associated to a \emph{Carnot-Carathéodory structure}, that is the sub-Riemannian Heisenberg group $\mathbb H^1$ (cf. \cite{CDPTbook, gromov} for an exhaustive survey of the topic). Therefore \eqref{firsteiksub} can be rephrased by considering the corresponding \emph{sub-Riemannian} equation
\begin{equation}
\label{subeiko}
    |Xu|= 1
\end{equation}
on $\Om$, where $Xu$ denotes the so-called \emph{horizontal gradient} associated to the vector fields $X_1$ and $X_2$ (cf. Section \ref{calculus}). The sub-Riemannian eikonal equation \eqref{subeiko} has been studied in the viscosity setting in \cite{D2007} in general Carnot--Carathéodory spaces, whereas more general equations has been considered for instance in \cite{StrofMan,Strof,Biro,soravia,CGPV,bdm09, BG19,MR3168637}.  In the broad generality of metric spaces the notion of viscosity solution for the Hamilton--Jacobi equation has been studied by \cite{GHN15,AF14}. In \cite{GS15} the authors introduced a different notion of metric viscosity solution for continuous Hamiltonians $H(x,u, |\nabla u|)$ based  on the local metric slope $|\nabla u|$, that is a generalized notion of the gradient norm of $u$ in metric spaces, and they showed several comparison and existence results. Moreover, in \cite{LSZ21} the authors studied the eikonal equation \eqref{ee} in  complete and rectifiable connected metric spaces, providing  the equivalence between their notion of viscosity solutions and Monge solutions when the right hand side $f$ is continuous with respect to the metric distance. To the best of our knowledge, in a general metric space a notion of metric gradient is not available, and only the local metric slope $|\nabla u|$ can be considered (cf. \cite{ags}). Accordingly, the last entry of the metric Hamiltonian is a scalar and not a vector. Indeed, during the preparation of this manuscript, we became aware of the work \cite{LSX}, where the authors study the eikonal equation in metric measure spaces with a discontinuous  inhomogeneous term.  However in Carnot--Carathéodory spaces, that are examples of length metric spaces, the notion of horizontal gradient $Xu$ is well-known and general stationary discontinuous Hamiltonians $H(x,Xu)$ can be considered.

In the present paper, inspired by \cite{monge}, we study sub-Riemannian Hamilton--Jacobi equations of the form
\begin{equation}\tag{H-J}
\label{eq:HJI}
H(x,Xu)=0
\end{equation}
on $\Om$, where here and in the following $\Om$ denotes a subdomain of a \emph{Carnot group} $\mathbb G$ of rank $m$, $Xu$ is the horizontal gradient associated to $\G$ and the Hamiltonian $H:\Om\times\rr^m \longrightarrow \rr$ satisfies the following structural assumptions (H):
\begin{itemize}
\label{H}
\item[$(\text{H}_1)$]  $H: \Om \times \rr^m \to \rr$ is Borel measurable;
\item[$(\text{H}_2)$] The set
\[
Z(x):=\{p\in\rr^m\,:\,H(x,p) \le 0\}
\]
is closed, convex and $\partial Z(x)=\{p\in\rr^m\,:\,H(x,p)=0\}$ for any $x\in\Om$;
\item[$(\text{H}_3)$] There exist $\alpha > 1$ such that 
$$\hat{B}_{\frac{1}{\alpha}}(0) \subset Z(x) \subset \hat{B}_{\alpha}(0)$$ 
for any $x\in\Om$, where $\hat{B}_{\alpha}(0)$ is Euclidean  open  ball of radius $\alpha$ centered at the origin in $\rr^m$.
\end{itemize}
We recall that  a \emph{Carnot group $\G$ of dimension $n$ and rank $m$} is a connected and simply connected $n$-dimensional Lie group whose Lie algebra $\mathfrak g$ of left-invariant vector fields admits a suitable \emph{stratification}, and is generated via Lie brackets by a particular $m$-dimensional vector subspace, the \emph{first layer} $\mathfrak g_1$, for which a basis $X_1,\ldots,X_m$ is fixed. We refer to Section \ref{carnotgroups} for a brief introduction to Carnot groups.
Since the exponential map associated to a Carnot group is a global diffeomorphism (cf. \cite{BonLanUgu}), here and in the following we identify $\G$ with $\rr^n$, with group law inherited by $\G$ by means of the \emph{Campbell--Baker--Hausdorff formula} (cf. \cite{BonLanUgu}). 
In the following, we refer to $X_1,\ldots,X_m$ as \emph{generating horizontal vector fields.} Moreover, we recall that an absolutely continuous curve
$\gamma\colon [0,T]\to \mathbb{G}$ is said to be \emph{horizontal} if there exists $a(t)=(a_1(t),\ldots, a_m(t))\in L^\infty((0,T),\rr^m)$ such that
\begin{equation}\label{curvaorizzontale}
    \dot{\gamma}(t)=\sum_{i=1}^m a_i(t)X_i(\gamma(t))
\end{equation}
 for a.e. $t\in (0,T)$. The structural assumptions (H) allows us to associate a suitable norm to the Hamiltonian $H$. More precisely, inspired by \cite{monge}, we define $\sigma^\star:\Om\times\rr^m\longrightarrow[0,\infty)$ by
\begin{equation}\label{supportfunct}
\sigma^\star(x,p)=\sup \{ \escpr{-\xi,p} \ : \ \xi \in Z(x)\}
\end{equation}
for any $x\in\Om$ and any $p\in\rr^m$. It is easy to observe that $\sigma^\star$ is a sub-Finsler norm defined on the \emph{horizontal bundle} $H\Om$, that is the subbundle of $T\Om$ of  those vector fields, which are called \emph{horizontal}, which are tangent to the $m$-dimensional distribution generated by $X_1,\ldots,X_m$. Accordingly, we exploit $\sigma^\star$ to induce a distance $\ol$ on $\Om$, whose Euclidean counterpart is known in literature as \emph{optical length function}, by
\begin{equation}\label{induceddd}
d_{\sigma^\star}(x,y)= \inf \left  \{ \int_0^1 \sigma^\star(\ga(t),\dot{\ga}(t)) \,dt    \ : \ \ga:[0,1]\longrightarrow\Om,\,\text{$\ga$ is horizontal},\,\ga(0)=x,\,\ga(1)=y \right\}
\end{equation}
for each $x,y \in \Omega$.  Being $\Om$ open and connected, the celebrated Chow-Rashevskii connectivity theorem (cf. \cite{Chow,Nagel,gromov}) implies that $\ol$ is finite for any $x,y\in\Om$,  since every two points in an open and connected set can be joined by a horizontal curve. Again inspired by \cite{NewSu,monge}, we are ready to state the main definition of this paper.
\begin{definition}[Monge solution]\label{mongedefinizione}
Let $\Omega \subset \mathbb{G}$ be an open and connected subset of $\mathbb{G}$. If $u \in C(\Om)$, we say that $u$ is a \emph{Monge solution} (resp. \emph{subsolution},\emph{supersolution}) to 
\eqref{eq:HJ}
in $\Omega$ if 
\begin{equation}
\liminf_{x \to x_0} \frac{u(x) - u(x_0) + d_{\sigma^\star}(x_0, x)}{d_{\Om}(x_0,x)}
= 0 \quad (\text{resp. } \geq, \leq)
\end{equation}
for any $x_0\in\Om$, where $d_\Om$ is the standard \emph{Carnot-Carathéodory distance} on $\Om$ (cf. Section \ref{prel}).
\end{definition}
The aim of this paper is to investigate the main aspects of this definition in the sub-Rimannian setting, recovering the Euclidean results achieved in \cite{monge}. A first step consists in relating this notion to the classical sub-Riemannian notion of viscosity solution. To this aim, after describing some properties of the optical length function \eqref{induceddd} (cf. Section \ref{olprop}) and of viscosity solutions in Carnot groups (cf. Section \ref{viscsect}), we will show that the theory of Monge solutions embeds the theory of viscosity solutions, proving the equivalence of these two notions as soon as the Hamiltonian is continuous.
\begin{theorem}\label{introequiv}
     Let $\Om\subseteq \G$ be a domain. Let $H$ be a continuous Hamiltonian satisfying {\rm (H)}. Then $u\in C(\Om)$ is a Monge subsolution (resp. supersolution) to \eqref{eq:HJI} if and only if it is a viscosity subsolution (resp. supersolution) to \eqref{eq:HJI}.
\end{theorem}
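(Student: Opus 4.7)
The plan is to split the biconditional into four one-sided implications: the two ``Monge $\Rightarrow$ viscosity'' directions, handled by an infinitesimal test-function argument along horizontal integral curves, and the two ``viscosity $\Rightarrow$ Monge'' directions, handled respectively by integration along horizontal curves (subsolution) and by a test-function construction (supersolution).

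For \emph{Monge $\Rightarrow$ viscosity}, fix $x_0 \in \Om$ and a $C^1$ test function $\varphi$ such that $u - \varphi$ attains a local extremum at $x_0$. For each unit $v \in \rr^m$, consider the horizontal integral curve $\gamma_v(t) = \exp\bigl(t\sum_i v_i X_i\bigr)(x_0)$. By the chain rule $\varphi(\gamma_v(t)) - \varphi(x_0) = t\langle X\varphi(x_0), v\rangle + o(t)$; by continuity of $\sigma^\star$ and using $\gamma_v$ as a competitor in \eqref{induceddd}, $d_{\sigma^\star}(x_0, \gamma_v(t)) \le t\sigma^\star(x_0, v) + o(t)$; and $d_\Om(x_0, \gamma_v(t))$ is comparable to $t$ thanks to $(H_3)$. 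In the subsolution case, substituting these asymptotics into the Monge inequality for $\varphi$ (inherited from $u$ via $u \le \varphi + (u(x_0) - \varphi(x_0))$ near $x_0$) and dividing by $d_\Om(x_0, \gamma_v(t))$ forces $\langle X\varphi(x_0), v\rangle + \sigma^\star(x_0, v) \ge 0$ for every $v$, which by the support-function description of $Z(x_0)$ in $(H_2)$ gives $X\varphi(x_0) \in Z(x_0)$. In the supersolution case we argue by contradiction: if $H(x_0, X\varphi(x_0)) < 0$, convexity and continuity produce $\delta > 0$ and a neighborhood $U$ of $x_0$ with $X\varphi(x) + \hat{B}_\delta(0) \subset Z(x)$ on $U$, so that $\sigma^\star(x, v) + \langle X\varphi(x), v\rangle \ge \delta|v|$ pointwise on $U$; integrating along a near-optimal horizontal curve in \eqref{induceddd} yields a strict positive lower bound on the Monge quotient, contradicting $\liminf \le 0$.

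For \emph{viscosity sub $\Rightarrow$ Monge sub}, the uniform coercivity $Z(x) \subset \hat{B}_\alpha(0)$ in $(H_3)$ combined with the viscosity inequality makes $u$ locally $d_\Om$-Lipschitz, hence Pansu-differentiable a.e. with $Xu(x) \in Z(x)$ for a.e. $x$. For any horizontal curve $\gamma : [0,1] \to \Om$ joining $x_0$ to $x$,
\begin{equation*}
u(x_0) - u(x) = \int_0^1 \langle Xu(\gamma(s)), -\dot\gamma(s)\rangle\, ds \le \int_0^1 \sigma^\star(\gamma(s), \dot\gamma(s))\, ds,
\end{equation*}
and taking the infimum over $\gamma$ yields $u(x_0) - u(x) \le d_{\sigma^\star}(x_0, x)$, from which $\liminf \ge 0$ follows immediately. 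For \emph{viscosity sup $\Rightarrow$ Monge sup} we argue by contradiction: if the Monge quotient has $\liminf \ge 2\eta > 0$ at $x_0$, then $u(x) \ge u(x_0) - d_{\sigma^\star}(x_0, x) + \eta d_\Om(x_0, x)$ in a neighborhood of $x_0$, and the plan is to build a $C^1$ test function $\varphi$ with $u - \varphi$ attaining a local minimum at $x_0$ and $H(x_0, X\varphi(x_0)) < 0$, contradicting the viscosity supersolution property.

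The main obstacle is this final test-function construction: since $d_{\sigma^\star}(x_0, \cdot)$ and $d_\Om(x_0, \cdot)$ are only Lipschitz, $\varphi$ cannot be chosen as the natural candidate $u(x_0) - d_{\sigma^\star}(x_0, x) + \eta d_\Om(x_0, x)$. Instead, the strategy is to take a horizontal linear $\varphi(x) = u(x_0) + \langle p, \pi_1(x_0^{-1} x)\rangle$, where $\pi_1$ denotes the projection onto the first layer $\mathfrak g_1$, with $p \in \mathrm{int}(Z(x_0))$ chosen via convex-analytic separation using the gap $\eta$ and the support-function description of $Z(x_0)$ in $(H_2)$. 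The touching condition $u \ge \varphi$ near $x_0$ must then be verified using the first-order horizontal asymptotics of $d_\Om(x_0, \cdot)$ and $d_{\sigma^\star}(x_0, \cdot)$ at $x_0$, which in a Carnot group are controlled by the homogeneous group structure and by the properties of the optical length function developed in Section \ref{olprop}.
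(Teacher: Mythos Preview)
Your first three implications are essentially correct and close in spirit to the paper's Propositions \ref{firstimpl}, \ref{MonforLipv} and \ref{secondimplsub}. One technical caveat: in the ``viscosity sub $\Rightarrow$ Monge sub'' step, writing $u(x_0)-u(x)=\int_0^1\langle Xu(\gamma(s)),-\dot\gamma(s)\rangle\,ds$ tacitly assumes $Xu$ is defined along $\gamma$, which can fail on a set of full arclength even though $Xu$ exists a.e.\ in $\Om$. The paper handles this via the $(X,N)$-subgradient of \cite{PVW} (Proposition \ref{w2}), which is the appropriate chain rule here.

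The real gap is in your ``viscosity sup $\Rightarrow$ Monge sup'' step. The proposed horizontal linear test function $\varphi(x)=u(x_0)+\langle p,\pi(x_0^{-1}\cdot x)\rangle$ cannot touch $u$ from below at $x_0$ when the Monge gap $\eta$ is small. Indeed, you need $\langle p,\pi(x_0^{-1}\cdot x)\rangle\le -d_{\sigma^\star}(x_0,x)+\eta\,d_\Om(x_0,x)$ near $x_0$. Along a ``vertical'' approach $x=x_0\cdot\delta_t(0,\zeta)$ with $\zeta\neq 0$, the left-hand side vanishes while the right-hand side is at most $(\eta-\tfrac{1}{\alpha})\,d_\Om(x_0,x)<0$ for $\eta<\tfrac{1}{\alpha}$, by Lemma \ref{prop:optimalpath}. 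So no choice of $p$ works; equivalently, the subjet of the barrier $x\mapsto -d_{\sigma^\star}(x_0,x)+\eta\,d_\Om(x_0,x)$ at $x_0$ is empty for small $\eta$, and nothing prevents $\partial_X^-u(x_0)$ from being empty too, in which case the viscosity supersolution condition at $x_0$ is vacuous and a pointwise test-function argument cannot succeed.

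The paper's route (Proposition \ref{viscohenceMonge}) is genuinely different and global rather than pointwise: from the failure of the Monge supersolution condition one obtains $u\ge\psi:=-d_{\sigma^\star}(x_0,\cdot)+\delta r$ on $\partial B_r(x_0,\d)$; the Hopf--Lax formula (Theorem \ref{mongedir}) together with the already-proved Monge $\Rightarrow$ viscosity direction yields a viscosity solution $w$ on $B_r(x_0,\d)$ with boundary datum $\psi$; and a viscosity comparison principle (Proposition \ref{viscocomp}, proved by doubling of variables with the smooth homogeneous norm $\d$) forces $w(x_0)\le u(x_0)=0$, contradicting $w(x_0)=\delta r>0$. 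The key point is that comparison uses the supersolution property of $u$ throughout the ball, not just at $x_0$, which is exactly what circumvents the empty-subjet obstruction above.
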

Despite some similarities with the Euclidean method, the sub-Riemannian structure requires  some adjustments.  Indeed, in order to prove Theorem \ref{introequiv}, we will first need to recover a suitable Hopf--Lax formula for the Dirichlet problem associated to \eqref{eq:HJI}. In this respect, the first striking difference with the Euclidean environment emerges. Indeed, in the classical theory of Monge solutions (cf. \cite{NewSu,monge}) the optical length function is defined on the whole $\overline\Om$. This possibility relies on the fact that every two points in $\overline\Om$ can be joined by an Euclidean Lipschitz curve as soon as the boundary of $\Om$ is locally Lipschitz. Unfortunately this property is no longer true in our setting, since it is not always the case that two points on $\partial\Om$ can be connected by a horizontal curve lying in $\overline\Om$. A useful consequence of the Euclidean approach is that the optical length function is a \emph{geodesic distance}  (cf. Section \ref{lentandgeodistancesection}), which is no longer true in our case. The solution to this first major problem relies on some delicate localisation arguments. The key point in which one would like to exploit the fact that the optical length function is defined up to the boundary is the validity of the classical Hopf--Lax formula. To be more precise,  in the Euclidean setting it is the case (cf. \cite[Theorem 5.3]{monge}) that if $\Om$ is a bounded domain with Lipschitz boundary and $g\in C(\partial\Om)$ satisfies the compatibility condition 
\begin{equation*}
    g(x)-g(y)\leq \ol(x,y)
\end{equation*}
for any $x,y\in\partial\Om$, the function 
$w$ defined by 
\begin{equation*}
    w(x)=\inf_{y\in\partial\Om}\{\ol(x,y)+g(y)\}
\end{equation*}
is a Monge solution to  \eqref{euclhjdavbri} and coincides with $g$ on $\partial\Om$. Since our optical length function is defined only on $\Om$, this formula would become meaningless. We overcome this difficulty by suitably extending our original Hamiltonian. To this aim, we let
\begin{equation}\tag{$\mathcal K$}\label{kzero}
    \mathcal K(H,\Om):=\{K:\rr^n\times\rr^m\longrightarrow \rr\,:\,\text{$K$ satisfies {\rm (H)} and $K\equiv H$ on $\Om\times\rr^m$}\}.
\end{equation}
Notice that $\mathcal K(H,\Om)$ is always non-empty, as every Hamiltonian can be extended to the whole $\rr^n\times\rr^m$ by letting $H(x,p)=|\xi|-\alpha$ outside $\Om\times\rr^m$. For any fixed $K\in \mathcal K(H,\Om)$, we consider the associated metric $\sigma^\star_ K$ and optical length function $\olk$.  The advantage of this approach consists in the fact that, in view of the aforementioned Chow--Rashevskii connectivity theorem, every two points in $\rr^n$ can be connected by a horizontal curve. Therefore, $\olk$ is actually a finite distance on the whole $\rr^n$, whence in particular on $\overline\Om$. Surprisingly, we will show (cf. Proposition \ref{mongequivdef}) that the definition of Monge solution on $\Om$ is invariant by replacing $H$ with any $K\in\mathcal K(H,\Om)$. In this way, what \emph{a priori} constitutes a considerable problem ensures \emph{a posteriori} a more accurate understanding of Monge's notion of solution. These facts motivate the following result.
\begin{theorem}[Hopf--Lax formula]\label{mongedir}
    Let $\Om\subseteq \G$ be a domain and let $H$ satisfy {\rm (H)}. Let $g\in C(\partial\Om)$ be  bounded and  such that there exists $K\in\mathcal K(H,\Om)$ for which
    \begin{equation}\tag{BCC}\label{compcond}
        g(x)-g(y)\leq\olk (x,y)
    \end{equation}
    for any $x,y\in\partial\Om$. Let us define 
    \begin{equation}\tag{H-L}\label{hopflax}
    w(x):=\inf_{y\in\partial\Om}\{\olk (x,y)+g(y)\}.    
    \end{equation}
    Then $w\in \lip (\Om,d_\Om)\cap C(\overline\Om)$ and $w$ is a Monge solution to the Dirichlet problem 
\begin{align*}
    H(x,Xw) &= 0 \quad \mbox{in } \Om \\
    w &= g \quad \mbox{on } \partial \Om.
\end{align*}
\end{theorem}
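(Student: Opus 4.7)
My approach mirrors the Euclidean strategy of \cite{monge,NewSu}, using the extension $K\in\mathcal K(H,\Om)$ as a global device so that $\olk$ is a finite distance on all of $\rr^n$. The first observation is that $(\text{H}_3)$ yields the two-sided bound $\tfrac{1}{\alpha}|p|\le \sk(x,p)\le \alpha|p|$ on $\rr^n\times\rr^m$, which by integration along horizontal curves gives $\tfrac{1}{\alpha}d(x,y)\le \olk(x,y)\le \alpha\, d(x,y)$, where $d$ denotes the Carnot--Carath\'eodory distance on $\G$. The boundary identity $w=g$ on $\partial\Om$ is then immediate: the choice $y=x_0$ yields $w(x_0)\le g(x_0)$, while \eqref{compcond} gives $g(y)+\olk(x_0,y)\ge g(x_0)$ for every $y\in\partial\Om$. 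A standard triangle-inequality argument (pick a near-minimizer $y$ for $w(x')$ and bound $w(x)-w(x')$ by $\olk(x,x')$) produces $|w(x)-w(x')|\le \alpha\, d_\Om(x,x')$, so $w\in\lip(\Om,d_\Om)$. Continuity at a point $x_0\in\partial\Om$ follows by combining $w(x_n)\le g(x_0)+\olk(x_n,x_0)$ with the dual lower bound $w(x_n)\ge g(x_0)-\olk(x_n,x_0)$ (again from the compatibility condition) and observing that $\olk(x_n,x_0)\le\alpha\, d(x_n,x_0)\to 0$ as $x_n\to x_0$ in the Euclidean topology.

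The second step identifies $\ol$ with $\olk$ in a small neighborhood of any interior point. Fix $x_0\in\Om$ and $r>0$ such that the Carnot--Carath\'eodory ball of radius $2\alpha r$ centered at $x_0$ is contained in $\Om$; the lower bound $\sk(x,p)\ge|p|/\alpha$ forces every horizontal curve issued from $x_0$ of $\sk$-length at most $r$ to have CC-length at most $\alpha r$, hence to remain inside $\Om$. Consequently, if $\olk(x_0,x)<r/2$, then every nearly optimal $\olk$-competitor between $x_0$ and $x$ is admissible for $\ol$, so that $\ol(x_0,x)=\olk(x_0,x)$. Granted this local coincidence, the Monge subsolution inequality is routine: for any $\eps>0$ pick $y_\eps\in\partial\Om$ with $\olk(x,y_\eps)+g(y_\eps)\le w(x)+\eps$ and apply the triangle inequality to get $w(x_0)\le \olk(x_0,x)+w(x)+\eps$; sending $\eps\to 0$ and dividing by $d_\Om(x_0,x)$ gives $\liminf_{x\to x_0}\ge 0$ in Definition \ref{mongedefinizione}.

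The supersolution inequality, which I expect to be the main obstacle, requires a near-geodesic slicing construction. Given $x_0\in\Om$ and a sequence $\eps_k\downarrow 0$, select $y_k\in\partial\Om$ with $\olk(x_0,y_k)+g(y_k)\le w(x_0)+\eps_k$ and a horizontal curve $\ga_k$ joining $x_0$ to $y_k$, parameterized by $\sk$-arc length on $[0,T_k]$ with $T_k\le\olk(x_0,y_k)+\eps_k$. Set $t_k:=\sqrt{\eps_k}$ and $x_k:=\ga_k(t_k)$. A short cut-and-paste comparison---concatenating any hypothetically shorter $\olk$-competitor from $x_0$ to $x_k$ with the tail $\ga_k|_{[t_k,T_k]}$---forces $t_k-\eps_k\le\olk(x_0,x_k)\le t_k$, while combining $w(x_k)\le\olk(x_k,y_k)+g(y_k)$ with $w(x_0)\ge\olk(x_0,y_k)+g(y_k)-\eps_k$ and $\olk(x_k,y_k)\le T_k-t_k$ yields
\[
w(x_k)-w(x_0)\le (T_k-t_k)-(T_k-\eps_k)+\eps_k=2\eps_k-t_k.
\]
For $k$ large, $x_k$ lies in the neighborhood of the previous step, so $\ol(x_0,x_k)=\olk(x_0,x_k)\le t_k$, and the numerator in Definition \ref{mongedefinizione} is therefore at most $2\eps_k$. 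Using $d_\Om(x_0,x_k)\ge d(x_0,x_k)\ge\olk(x_0,x_k)/\alpha\ge(t_k-\eps_k)/\alpha$, the ratio is bounded by $2\alpha\eps_k/(t_k-\eps_k)=O(\sqrt{\eps_k})\to 0$, which yields $\liminf_{x\to x_0}\le 0$. The delicate points are the simultaneous control of the $\sk$-parametrization of a curve reaching $\partial\Om$, the inclusion $x_k\in\Om$ needed for the identification $\ol=\olk$ to apply, and the calibration $t_k=\sqrt{\eps_k}$ which keeps $\eps_k/t_k\to 0$ while still producing $x_k\to x_0$.
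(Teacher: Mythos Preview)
Your proposal is correct and follows essentially the same strategy as the paper. The regularity, boundary identity, and subsolution arguments match almost verbatim; for the supersolution step both you and the paper pick a near-minimizer $y_k\in\partial\Om$, a nearly $\olk$-optimal curve from $x_0$ to $y_k$, and an intermediate point $x_k$ on that curve approaching $x_0$---the only cosmetic difference is that the paper chooses $x_k$ as the first crossing of the $\d$-sphere of radius $1/h$ (with error tolerance $1/h^2$), whereas you parametrize by $\sk$-arc length and take $x_k=\gamma_k(\sqrt{\eps_k})$, which yields the same $O(\sqrt{\eps})$ ratio. Your inline localisation $\ol=\olk$ near $x_0$ is exactly the content of the paper's Proposition~\ref{mongequivdef} (via Lemma~\ref{equivsigma}).
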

 Notice that the compatibility condition \eqref{compcond} is trivially  necessary for the function $w$ given by \eqref{hopflax} to attain
the boundary datum $g$ on $\partial \Omega$.
After proving Theorem \ref{introequiv} and Theorem \ref{mongedir}, we continue the study of \eqref{eq:HJI} in the discontinuous setting. First, we show the validity of the following comparison principle for Monge solutions. 
\begin{theorem}[Comparison Principle]\label{comparison}
 Let $\Om\subseteq\G$ be a bounded domain.  Let $H$ be a Hamiltonian satisfying {\rm (H)}, let $u \in C(\bar{\Omega})$ be a Monge subsolution to \eqref{eq:HJ} and $v \in C(\bar{\Omega})$ be a Monge supersolution to  \eqref{eq:HJ}. If $u \le v$ on $\partial \Omega$, then $u \le v $ in $\Omega$.
\end{theorem}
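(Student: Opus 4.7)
I would argue by contradiction. Assume $M := \max_{\overline\Om}(u - v) > 0$. Since $u \le v$ on $\partial\Om$ and both functions are continuous on $\overline\Om$, the set $K := \{x \in \overline\Om : (u - v)(x) = M\}$ is a non-empty compact subset of $\Om$. Fix $x_0 \in K$. The plan is to construct a horizontal curve issuing from $x_0$ that remains in $K$ and that can be extended until it meets $\partial\Om$, contradicting the boundary hypothesis.

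The construction would rely on two complementary properties of Monge sub- and supersolutions, to be established in preliminary sections. The first is an \emph{integrated Lipschitz bound} for subsolutions: $u(x) - u(y) \le \ol(x, y)$ for all $x, y \in \Om$ connected by a horizontal curve in $\Om$, obtained by chaining the pointwise liminf condition along the curve. The second is a \emph{dynamic form} of the supersolution property: for every $x_1 \in \Om$ and every $\eta > 0$, there exists a horizontal curve $\gamma \colon [0, \tau] \to \Om$ with $\gamma(0) = x_1$ and positive $d_\Om$-length such that $v(x_1) - v(\gamma(t)) \ge \int_0^t \sigma^\star(\gamma, \dot\gamma)\, ds - \eta t$ for all $t \in [0, \tau]$. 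Applying the latter at $x_0 \in K$ and combining with the maximality $(u-v)(\gamma(t)) \le M$ and the Lipschitz bound for $u$, the inequalities
\[
\int_0^t \sigma^\star(\gamma, \dot\gamma)\, ds - \eta t \;\le\; v(x_0) - v(\gamma(t)) \;\le\; u(x_0) - u(\gamma(t)) \;\le\; \ol(x_0, \gamma(t)) \;\le\; \int_0^t \sigma^\star(\gamma, \dot\gamma)\, ds
\]
are squeezed, and sending $\eta \to 0$ forces $\gamma$ to be an $\ol$-minimiser along which $u$ and $v$ differ by exactly $M$; hence $\gamma \subset K$.

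A concatenation/Zorn argument, together with Arzelà--Ascoli and the comparability $\ol \asymp d_\Om$ granted by $(\text{H}_3)$, extends this horizontal curve maximally inside $K$. Since $\Om$ is bounded, the extended curve must accumulate at some $x_\infty \in \overline\Om$, and the impossibility of continuing the iteration forces $x_\infty \in \partial\Om$. By continuity, $(u - v)(x_\infty) = M > 0$, contradicting $u \le v$ on $\partial\Om$.

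The main obstacle is the dynamic supersolution step: in the Euclidean framework of \cite{NewSu,monge} one selects the characteristic direction via the outward normal to $\partial Z(x)$ and follows an ODE trajectory, whereas in the Carnot setting the curve is constrained to the horizontal distribution. The structural assumption $(\text{H}_3)$ provides the bridge by yielding $\sigma^\star$ comparable to $|\cdot|$, and hence $\ol \asymp d_\Om$, which ensures that sub-Finsler minimisers with the prescribed descent property can always be produced along the characteristic. A secondary subtlety is the delicate interplay between the non-symmetric optical length $\ol$ and the symmetric CC-distance $d_\Om$, which demands careful tracking of the orientation of the inequalities as one propagates the maximum set through the iteration.
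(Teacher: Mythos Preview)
Your strategy is genuinely different from the paper's, which proceeds by a scaling trick (replacing $u$ by $\delta u$ for $\delta\in(0,1)$, so that $\delta u$ is a Monge subsolution for the Hamiltonian $H_\delta(x,\xi)=H(x,\xi/\delta)$) combined with a doubling-of-variables penalisation (Lemma~\ref{hk}), and then letting $\delta\to 1^-$. Your characteristic-propagation idea is classical in spirit, but there is a real gap at the ``dynamic supersolution'' step. The Monge supersolution condition at $x_1$ only furnishes a \emph{sequence} $x_h\to x_1$ with $v(x_1)-v(x_h)\ge \ol(x_1,x_h)-o(d_\Om(x_1,x_h))$; it does not produce a single horizontal curve $\gamma$ along which $v(x_1)-v(\gamma(t))\ge\int_0^t\sigma^\star(\gamma,\dot\gamma)\,ds-\eta t$ holds for \emph{every} $t\in[0,\tau]$. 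Upgrading the pointwise $\liminf$ to such a curve amounts to selecting a descent direction and integrating a characteristic, which requires regularity (at least semicontinuity) of $Z(\cdot)$ or of $\sigma^\star(\cdot,\xi)$ so that the direction varies in a controllable way. Since the whole point of the theorem is to allow $H$ merely Borel, this regularity is unavailable; your remark that ``$(\text{H}_3)$ provides the bridge'' does not close the gap, because the comparability $\ol\asymp d_\Om$ is a size estimate, not a continuity statement, and does not manufacture the curve. Even if one extracts a limiting curve via Arzel\`a--Ascoli as $\eta\to 0$, the functional $\gamma\mapsto\int\sigma^\star(\gamma,\dot\gamma)$ need not be lower semicontinuous when $\sigma^\star$ is discontinuous in $x$, so the squeeze does not survive the limit.

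The Zorn/extension step is also incomplete. Each application of the dynamic property yields a segment of \emph{some} positive length, with no uniform lower bound, so the concatenated curve may have bounded total length and accumulate on a compact subset of $K\subset\Om$ without ever reaching $\partial\Om$; compactness alone does not rule this out, and a mere accumulation point $x_\infty\in K$ does not contradict maximality unless the curve actually converges to $x_\infty$. The paper's penalisation approach sidesteps both difficulties: the strict inclusion $Z_{H_\delta}\subseteq\delta Z_H$ created by the scaling produces a quantitative $(1-\delta)$ defect that forces a contradiction at the interior maximum of $(x,y)\mapsto u(x)-v(y)-\oth(x,y)^2/\varepsilon$, with no curve to construct or extend.
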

Notice that, combining Theorem \ref{mongedir} and Theorem \ref{comparison}, we guarantee existence and uniqueness for the Dirichlet problem associated to \eqref{eq:HJI} under the compatibility condition \eqref{compcond}.
 Finally, inspired by \cite{monge}, we show that the notion of Monge solution is stable under suitable notions of convergence for sequences of Hamiltonians and Monge solutions. 
\begin{theorem}[Stability]\label{stab}
     Let $\Om\subseteq G$ be a domain. Let $(H_n)_{n\in\mathbb N}$ and $H_\infty$ satisfy {\rm (H)} with a uniform choice of $\alpha$. For any $n\in\mathbb N$, let $u_n\in C(\Om)$ be a Monge solution to 
    \begin{equation*}
        H_n(x,Xu_n(x))=0
    \end{equation*}
    on $\Om$. Assume that $d_{\sigma_n^\star}\to d_{\sigma_\infty^\star}$ locally uniformly on $\Om\times\Om$, where, for any $n\in\mathbb N$, $d_{\sigma_n^\star}$ is the optical length function associated to $H_n$ and $d_{\sigma_\infty^\star}$ is the optical length function associated to $H_\infty$. Assume that there exists $u_\infty\in C(\Om)$ such that $u_n\to u_\infty$ locally uniformly on $\Om$. Then $u_\infty$ is a Monge solution to 
    \begin{equation*}
        H_\infty(x,Xu_\infty(x))=0
    \end{equation*}
    on $\Om$.
\end{theorem}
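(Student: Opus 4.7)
The plan is to verify that $u_\infty$ fulfills both the subsolution and the supersolution sides of Definition \ref{mongedefinizione} at an arbitrary point $x_0 \in \Om$. A preliminary observation is that the uniform choice of $\alpha$ in $(\text{H}_3)$ yields, uniformly in $n$, the two-sided estimate $\alpha^{-1} d_\Om(x,y) \le d_{\sigma_n^\star}(x,y) \le \alpha\, d_\Om(x,y)$, so that each $u_n$ is locally Lipschitz with a constant independent of $n$, and $u_\infty$ inherits the same local Lipschitz regularity.

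For the subsolution inequality ($\liminf_{x \to x_0} \ge 0$ for $u_\infty$), I would invoke the characterization---obtained by iterating the infinitesimal Monge condition along horizontal curves---according to which $u \in C(\Om)$ is a Monge subsolution of \eqref{eq:HJI} if and only if
\[
u(x) - u(y) \le d_{\sigma^\star}(x,y) \quad \text{for every } x,y \in \Om.
\]
Applied to each $u_n$, this yields $u_n(x) - u_n(y) \le d_{\sigma_n^\star}(x,y)$. Passing to the limit by virtue of the locally uniform convergences $u_n \to u_\infty$ on $\Om$ and $d_{\sigma_n^\star} \to d_{\sigma_\infty^\star}$ on $\Om \times \Om$ produces the analogous inequality for $u_\infty$ and $d_{\sigma_\infty^\star}$, which is exactly the Monge subsolution property at every point of $\Om$.

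For the supersolution inequality ($\liminf_{x \to x_0} \le 0$ for $u_\infty$), the natural strategy is a Hopf--Lax representation combined with compactness. Fix $r > 0$ small enough that $V_r := B_r^{d_\Om}(x_0) \Subset \Om$. Since $u_n$ is itself a Monge solution in $V_r$ with continuous boundary datum $g_n := u_n|_{\partial V_r}$, Theorem \ref{mongedir} combined with Theorem \ref{comparison} provides, for a suitable $K_n \in \mathcal K(H_n|_{V_r}, V_r)$,
\[
u_n(x_0) = \inf_{y \in \partial V_r} \bigl\{ d_{\sigma_{K_n}^\star}(x_0, y) + u_n(y) \bigr\}.
\]
For $r$ small enough, the uniform comparability $d_{\sigma_n^\star} \asymp d_\Om$ forces every near-optimal curve for $d_{\sigma_{K_n}^\star}(x_0, y)$ with $y \in \partial V_r$ to remain inside a fixed compact subset of $\Om$ independent of $n$, which yields the identification $d_{\sigma_{K_n}^\star}(x_0, y) = d_{\sigma_n^\star}(x_0, y)$ on $\partial V_r$. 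Picking $y_n \in \partial V_r$ with $d_{\sigma_n^\star}(x_0, y_n) + u_n(y_n) \le u_n(x_0) + 1/n$, compactness of $\partial V_r$ gives $y_n \to y_\infty \in \partial V_r$ up to a subsequence, and the locally uniform convergences of $u_n$ and $d_{\sigma_n^\star}$ on $\overline{V_r}$ let one pass to the limit and conclude
\[
u_\infty(y_\infty) - u_\infty(x_0) + d_{\sigma_\infty^\star}(x_0, y_\infty) \le 0.
\]
Letting $r \to 0^+$ generates a sequence $y_\infty^r \to x_0$ with $d_\Om(x_0, y_\infty^r) = r > 0$ along which the Monge quotient for $u_\infty$ is non-positive, proving $\liminf_{x \to x_0} \le 0$.

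The main obstacle is precisely the localisation step underlying the identification $d_{\sigma_{K_n}^\star}(x_0, \cdot) = d_{\sigma_n^\star}(x_0, \cdot)$ on $\partial V_r$ uniformly in $n$: since the extensions $K_n$ are non-canonical and only required to satisfy $(\mathrm{H})$ with the same constant $\alpha$, nothing prevents a priori their near-optimal curves from escaping $\Om$ through shortcuts. The remedy combines the uniform bounds $|\cdot|/\alpha \le \sigma_{K_n}^\star \le \alpha |\cdot|$, which bound the horizontal length of near-optimal curves by a constant multiple of $r$, with basic properties of Carnot--Carath\'eodory balls from Section \ref{olprop} to confine these curves inside a fixed compact subset of $\Om$ once $r$ is sufficiently small. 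Once this step is settled, both the passage to the limit and the final conclusion follow by routine estimates.
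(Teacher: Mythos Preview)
Your proposal is correct and close in spirit to the paper's argument, though organised differently. The paper is more compact: fixing $x_0$ and a small ball $B_r(x_0, d_\Om) \Subset \Om$, it uses Proposition~\ref{MonforLip} to verify the boundary compatibility condition, then Theorem~\ref{mongedir} (together with uniqueness from Theorem~\ref{comparison}) to obtain the Hopf--Lax representation
\[
u_n(x) = \inf_{y \in \partial B_r(x_0,d_\Om)}\bigl\{d_{\sigma^\star_{H_n}}(x,y) + u_n(y)\bigr\}
\]
on the closed ball for every $n$; passing to the limit uniformly yields the same representation for $u_\infty$, and a second appeal to Theorem~\ref{mongedir} shows directly that $u_\infty$ is a Monge solution on the ball --- both inequalities at once. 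You instead treat the two inequalities separately: for the subsolution you invoke a \emph{global} $1$-Lipschitz estimate $u(x)-u(y)\le d_{\sigma^\star}(x,y)$ on all of $\Om$ (the paper only records the local version in Proposition~\ref{MonforLip}; globalization by chaining along near-optimal horizontal curves is routine but not spelled out there), and for the supersolution you extract a single witness $y_\infty^r \in \partial V_r$ per radius rather than recovering the full Hopf--Lax formula for $u_\infty$. Both routes are valid; the paper's buys brevity, yours makes the supersolution step somewhat more self-contained. Regarding the ``main obstacle'' you flag: it evaporates once one makes the canonical choice $K_n = H_n$ on all of $\Om$, extended outside $\Om$ via \eqref{numeraperdopo}; the localization radii in Lemma~\ref{equivsigma} and Proposition~\ref{MonforLip} depend only on $\alpha$ and are therefore uniform in $n$, so near-optimal curves indeed stay inside a fixed compact subset of $\Om$ exactly as you anticipate.
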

The paper is organized as follows. In Section \ref{prel} we recall some basic facts and properties about Carnot groups and length spaces. In Section \ref{olprop} we study some properties of the induced metric $\sigma^\star$ and the optical length function $\ol$. Section \ref{viscsect} is devoted to a short survey about viscosity solutions in Carnot groups. In Section \ref{hlsection} we introduce the Hopf-Lax formula \eqref{hopflax} and we prove Theorem \ref{mongedir}. In Section \ref{equivsect} we show the equivalence between Monge and viscosity solutions, proving Theorem \ref{introequiv}. Finally, Section \ref{lastsect} is devoted to the proof of Theorem \ref{comparison} and Theorem \ref{stab}.
\subsection*{Acknowledgements} The authors would like to thank Andrea Pinamonti for suggesting them the study of this problem, and Eugenio Vecchi for useful discussions about these topics. The authors would also like to thank the anonymous referees  for their precise and useful corrections.
\section{Preliminaries}\label{prel}
\subsection{Carnot groups}\label{carnotgroups}
As main reference of this section, we refer the reader to \cite{BonLanUgu,MR3587666}.
A \emph{Carnot group $\G$ of dimension $n$, rank $m$ and step $k$} is a connected and simply connected Lie group whose Lie algebra $\mathfrak g$ of left-invariant vector fields is \emph{stratified}, i.e. there exist linear subspaces $\mathfrak{g}_1, \ldots,\mathfrak{g}_k$ of $\mathfrak{g}$ such that
\begin{equation}\label{stratificazione}
\mathfrak{g}=\mathfrak{g}_1\oplus \ldots \oplus \mathfrak{g}_k,\quad [\mathfrak{g}_1,\mathfrak{g}_i]=\mathfrak{g}_{i+1},\quad
\mathfrak{g}_k\neq\{0\},\quad [\mathfrak{g}_1,\mathfrak{g}_k]=\{0\},
\end{equation}
where $[\mathfrak{g}_1,\mathfrak{g}_i]$ is the subspace of ${\mathfrak{g}}$ generated by
the commutators $[X,Y]=XY-YX$ with $X\in \mathfrak{g}_1$ and $Y\in \mathfrak{g}_i$.
We denote by $k$ the \emph{step} of $\G$, by $m:=\dim(\mathfrak g_1)$ its \emph{rank} and by $n:=\dim (\mathfrak g)$ its \emph{dimension}. We say that a basis $X=(X_1,\ldots,X_n)$ of $\mathfrak g$ is \emph{adapted to the stratification} when 
\begin{equation*}
    (X_{h_{j-1}+1},\ldots, X_{h_j})\text{ is a basis of $\mathfrak g_j$ for any $j=1,\ldots,k$,}
\end{equation*}
where $h_0:=0$ and $h_j:=\sum_{i=1}^j\dim(\mathfrak g_i)$. It is in general possible to identify a Carnot group $\G$ with $\rr^n$ via its exponential map (cf. \cite[Definition 2.1.57]{BonLanUgu}). Indeed, it is possible to prove (cf. \cite[Corollary 2.2.15]{BonLanUgu}, \cite[Proposition 2.2.17]{BonLanUgu} and \cite[Proposition 2.2.18]{BonLanUgu}) that $\mathfrak g$ can be equipped with a suitable group law which realizes $\mathfrak g$ as a Carnot group, and for which the exponential map
    \begin{equation*}
        \exp:\mathfrak g\longrightarrow\G
    \end{equation*}
    is a Lie group isomorphism. To conclude, it suffices to identify $\rr^n$ with $\mathfrak g$ via \emph{exponential coordinates} of the form 
    \begin{equation*}
        (y_1,\ldots,y_n)\mapsto y_1X_1+\ldots+y_n X_n,
    \end{equation*}
    being $X_1,\ldots, X_n$ a chosen adapted basis of $\mathfrak g$. In the following, we fix an adapted basis $X_1,\ldots, X_n$ that coincides with the canonical basis of $\rr^n$ at the origin, and we adopt the notation
\begin{equation*}
    y=(y^{(1)},\ldots,y^{(k)}),
\end{equation*}
where $y^{(j)}=(y_{h_{j-1}+1}, \ldots, y_{h_{j}})$ for each $j=1,\ldots,k$. With respect to these coordinates, it is well known (cf. \cite{BonLanUgu,MR3587666}) that $0$ is the group unit of $\G$, and moreover
\begin{equation}\label{prodincoord}
    y^{-1}=(-y_1,\ldots,-y_n)\qquad\text{and}\qquad (y\cdot z)^{(1)}=y^{(1)}+z^{(1)}
\end{equation}
for any $y,z\in \G$.
For any $\lambda>0$ and any $x\in\G$, we define the \emph{left translation} $\tau_x:\mathbb{G}\longrightarrow\mathbb{G}$ and the \emph{intrinsic dilation} $\delta_\lambda:\G\longrightarrow \G$ by
\begin{equation}\label{trasledil}
    \tau_x(z) \coloneqq x \cdot z\qquad\text{and}\qquad\delta_\lambda(y):=(\lambda y^{(1)},\lambda^2y^{(2)},\ldots,\lambda^ky^{(k)})
\end{equation}
for any $y,z\in\G$.
Both $\tau_x$ and $\delta_{\lambda}$ are smooth diffeomorphisms, and $\delta_\lambda$ is a Lie group isomorphism. The subbundle of the tangent bundle $T\G$ that is spanned by the vector fields $X_1,\ldots ,X_m$ 
is called the {\em horizontal bundle} $H\G$, with fibers given by
$$ H_x\G=\mathrm{span}\left\{X_1(x),\ldots, X_m(x)\right\},$$
and the sections of $H\G$ are referred to as \emph{horizontal vector fields}. As already pointed out in the introduction, when $\Om\subseteq \G$ is an open set, the subbundle $H\Om$ of the tangent bundle $T\Om$ is defined accordingly.
In this way, a sub-Riemannian structure can be defined on $\G$ by
considering a scalar product $\langle\cdot,\cdot\rangle_x$ 
that makes
$\{ X_1, \ldots, X_m\}$ orthonormal at each point $x \in\G$. Moreover, we denote by $| \cdot |_x$ the norm
induced by $\langle\cdot,\cdot\rangle_x$, namely $| v |_x:=\sqrt{\langle v,v\rangle_x}$ for every $v\in H_x\G$. 
In the following we identify each fiber $H_x\G$ with $\rr^m$ by considering a horizontal vector field $\sum_{j=1}^ma_j(x) X_j|_x$ as the vector-valued function $x\mapsto (a_1(x),\ldots,a_m(x))$.
Notice that, with the above identification between $H_x\mathbb G$ and $\rr^m$, $\langle\cdot,\cdot\rangle_x$ coincides with Euclidean scalar product $\langle\cdot,\cdot\rangle$ on $\rr^m$.
Accordingly, we denote by $\pi$ both the smooth section defined by 
$$\pi(y) = \sum_{j = 1}^m y_j X_j(y)$$
for any $y\in\G$ and the vector valued map
\begin{equation}\label{pelaproiezincoord}
    \pi(y)=(y_1,\ldots,y_m)
\end{equation}
for any $y\in\G$. Finally, if $d$
is a distance, we denote by $B_r(x,d)$ the $d$-metric ball of radius $r>0$ centered at $x\in\G$.
\subsection{Length and geodesic distances}\label{lentandgeodistancesection}
Let us briefly recall some general facts about metric spaces for the sake of completeness. We refer to \cite{MR1835418} as main reference. Let $(M,d)$ be a possibly non-symmetric metric space. We stress that, in light of \cite[Remark 2.2.6]{MR1835418}, the statements of \cite[Chapter 2]{MR1835418} which we are going to recall hold as well in the non-symmetric setting.  If $\gamma:[0,T]\longrightarrow M$ is a continuous curve, we define its \emph{length} by
\begin{equation}\label{ldingenlenspace}
    \text{L}_d(\gamma)=\sup\left\{\sum_{j=1}^sd\left(\gamma(t_{j-1}),\gamma(t_{j})\right)\,:\,0=t_0\leq t_1\leq\ldots\leq t_{s-1}\leq t_s=T \right\}.
\end{equation}
The length functional $\text{L}_d$ is lower semicontinuous with respect to the uniform convergence of continuous curves (cf. \cite[Proposition 2.3.4]{MR1835418}, and allows to define a second distance, say $d_{\text{L}_d}$, by letting
\begin{equation}\label{intrinsicdistancegenms}
    d_{\text{L}_d}(x,y)=\inf\left\{\text{L}_d(\gamma)\,:\,\gamma:[0,1]\longrightarrow M \text{ is $d$-Lipschitz, $\gamma(0)=x$ and $\gamma(1)=y$}\right\}.
\end{equation}
Accordingly, $(M,d)$ is a \emph{length space} (cf. \cite[Definition 2.1.6]{MR1835418}) whenever
\begin{equation}\label{lengthspacedef}
    d=d_{\text{L}_d},
\end{equation}
and it is a \emph{geodesic}, or \emph{complete}, space (cf. \cite[Definition 2.1.10]{MR1835418}) whenever it is a length space such that the infimum in \eqref{intrinsicdistancegenms} is attained by a suitable $d$-Lipschitz curve. We shall refer to such curves as \emph{optimal curves}. We point out (cf. \cite[Section 2.5.2]{MR1835418}) that, if $x,y\in M$ and $\gamma:[0,1]\longrightarrow M$ is an optimal curve for $d$ connecting $x$ to $y$, then
\begin{equation}\label{onlitool}
    d(x,y)=d(x,\gamma(t))+d(\gamma(t),y)
\end{equation}
for any $t\in [0,1]$.

\subsection{Sub-Riemannian distances} 
We recall that in the sub-Riemannian setting a \emph{sub-unit curve} is a horizontal curve $\ga:[0,T]\longrightarrow\G$ such that the vector-valued function $a$ as in \eqref{curvaorizzontale} satisfies $\|a\|_{\infty}\leq 1$.
In the following, for any domain $\Om\subseteq \mathbb{G}$, we denote by $\mathcal{H}(\Om)$ the set 
\begin{equation*}
    \mathcal{H}(\Om):=\{\gamma:[0,T]\longrightarrow\Om\,:\,\gamma\text{ is sub-unit, }T>0\}.
\end{equation*}
and we define the \emph{Carnot--Carath\'{e}odory distance} on $\Om$ by
	\[ d_{\Om}(x,y) \coloneqq \inf \left\{L_{\Om}(\gamma)\colon \gamma :[0,T]\longrightarrow\Om,\,\; \gamma\in\mathcal H(\Om),\,\ga(0)=x,\,\ga(T)=y \right\},\]
 where
 \begin{equation*}
     L_\Om(\ga):=\int_0^T|\Dot\ga(t)|\,dt
 \end{equation*}
 and where by $\Dot\ga(t)$ we mean as usual the coordinates of $\Dot\ga (t)$ with respect to $X_1|_{\ga(t)},\ldots,X_m|_{\ga(t)}$.
Let us notice (cf. \cite{monti}) that an absolutely continuous curve is horizontal if and only if it is $d_\Om$-Lipschitz. Thanks to the aforementioned Chow-Rashevskii connectivity theorem, $d_\Om$ is finite for any $x,y\in\Om$. More precisely, the following crucial consequence of \cite[Proposition 1.1]{Nagel} holds.
\begin{theorem}\label{sea} Let $\mathbb{G}$ be a Carnot group of step $k$, and let $\Om\subseteq\mathbb G$ be open and connected. Then the following properties hold.
\begin{itemize}
    \item[(i)]  $d_\Om$ is a finite distance on $\Om$.
    \item[(ii)] For any domain $\tilde\Om\Subset \Om$ there exists a positive constant $C_{\tilde\Om}$ such that 
    \begin{equation*}
        C_{\tilde\Om}^{-1}|x-y|\leq d_\Om(x,y)\leq C_{\tilde\Om}|x-y|^{\frac{1}{k}}\qquad\text{ for any }x,y\in \tilde\Om.
    \end{equation*}
    \end{itemize}
\end{theorem}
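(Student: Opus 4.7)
The proof splits along two distinct phenomena: the qualitative connectivity statement in (i), which rests on the Chow--Rashevskii theorem, and the quantitative bilateral estimate in (ii), whose lower bound is elementary while its upper bound is the classical theorem of Nagel, Stein and Wainger. The plan is to establish the distance axioms together with finiteness, then treat the two inequalities of (ii) separately, working inside an intermediate domain $U$ with $\tilde\Om\Subset U\Subset\Om$.

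For (i), I would first observe that $d_\Om$ satisfies the axioms of a symmetric distance. Symmetry follows because the time-reversal of a sub-unit curve $\gamma:[0,T]\to\Om$, namely $\tilde\gamma(t):=\gamma(T-t)$, is again sub-unit, its horizontal coefficients being $-a(T-t)$, which still satisfy $\|\cdot\|_\infty\le 1$. The triangle inequality follows from concatenation of sub-unit curves (after translation in time), non-negativity is immediate, and non-degeneracy will follow from the lower bound in (ii). Finiteness is exactly the Chow--Rashevskii theorem: since the stratification \eqref{stratificazione} guarantees that $X_1,\ldots,X_m$ generate $\mathfrak g$ at every point, any two points of the open connected set $\Om$ can be joined by a horizontal curve contained in $\Om$, which after reparametrisation becomes sub-unit with finite length.

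For the lower bound of (ii), I would fix $U$ with $\tilde\Om\Subset U\Subset\Om$ and set $C_K:=\max\{|X_i(x)|:1\le i\le m,\,x\in\overline U\}$, which is finite by smoothness of the vector fields and compactness of $\overline U$. Any sub-unit curve $\gamma$ lying in $\overline U$ satisfies $|\dot\gamma(t)|\le C_K$ pointwise, so its Euclidean length is bounded by $C_K\,L_\Om(\gamma)$. Given $x,y\in\tilde\Om$ with $d_\Om(x,y)$ small enough that any nearly-optimal sub-unit curve connecting them is trapped in $U$ (this threshold depending on the $d_\Om$-distance from $\tilde\Om$ to $\partial U$), one obtains $|x-y|\le C_K d_\Om(x,y)$. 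The complementary range, where $|x-y|$ is bounded away from zero, is absorbed by enlarging $C_{\tilde\Om}$, using the boundedness of $\tilde\Om$.

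For the upper bound $d_\Om(x,y)\leq C_{\tilde\Om}|x-y|^{1/k}$, I would apply Proposition 1.1 of \cite{Nagel} to $\{X_1,\ldots,X_m\}$ on $U$. Since the stratification forces iterated brackets of $X_1,\ldots,X_m$ of order at most $k$ to span $T_x\G$ uniformly in $x\in\overline U$, the Nagel--Stein--Wainger estimate supplies a constant $C=C(\tilde\Om,U)$ and a radius $r_0>0$ such that any $x,y\in\tilde\Om$ with $|x-y|<r_0$ are joined by a horizontal curve inside $U$ of sub-Riemannian length at most $C|x-y|^{1/k}$; since this curve lies in $\Om$, it bounds $d_\Om(x,y)$ as desired. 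Pairs with $|x-y|\ge r_0$ are again absorbed into a larger constant by boundedness of $\tilde\Om$ and the finiteness already established in (i). The main obstacle is the proof of the Nagel--Stein--Wainger estimate itself, which constructs short horizontal curves through iterated commutators and is far from routine; in our setting, however, the stratified structure of $\G$ lets us invoke it as a black box.
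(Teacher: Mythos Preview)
The paper does not give its own proof of this theorem: it is stated as a direct consequence of \cite[Proposition~1.1]{Nagel}, with the finiteness in (i) attributed to the Chow--Rashevskii theorem. Your sketch unpacks exactly the route the paper points to---Chow--Rashevskii for connectivity, the elementary bounded-coefficients argument for the lower inequality, and the Nagel--Stein--Wainger estimate invoked as a black box for the upper one---and is correct in substance.

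One point deserves tightening. In the last step you absorb pairs with $|x-y|\ge r_0$ ``by boundedness of $\tilde\Om$ and the finiteness already established in (i)'', but pointwise finiteness of $d_\Om$ does not by itself yield a uniform bound $\sup_{x,y\in\tilde\Om}d_\Om(x,y)<\infty$. What you actually need is that the \emph{local} Nagel--Stein--Wainger estimate already makes $d_\Om$ continuous on $\Om$ (via the triangle inequality), hence bounded on the compact set $\overline{\tilde\Om}\times\overline{\tilde\Om}\subset\Om\times\Om$; alternatively, chain the local estimate along a finite cover of the connected compact $\overline{\tilde\Om}$. Either fix is routine. A similar but smaller slip occurs in the lower bound: the ``complementary range'' should be phrased as $d_\Om(x,y)$ bounded below (not $|x-y|$), after which $|x-y|\le\operatorname{diam}(\tilde\Om)$ and the bound follows.
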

 Throughout the paper, if $A, B$ are two open sets we write $A\Subset B$ meaning that $\overline A$ is compact and $\overline A\subseteq B$.  The globally defined distance $d_\G$, that is the Carnot-Carathéodory distance $d_\Om$ when $\Om=\G$, is a geodesic distance in the sense of Section \ref{lentandgeodistancesection} (cf. \cite{monti}), while $d_{\Om}$ is not geodesic in general.
The rich algebraic structure of $\G$ allows to define the well known \emph{Gauge--Koranyi distance} on $\G$ (cf. \cite{BonLanUgu}). To this aim, consider the function
    \[\|(y^{(1)},\ldots,y^{(k)})\|=\Bigg(\sum_{j=1}^k|y^{(j)}|^{\frac{2k!}{j}}\Bigg)^\frac{1}{2k!}\]
    for any $y\in\G$. This is a \emph{homogeneous norm} (cf. \cite{BonLanUgu}) on $\G$,  and has the remarkable advantage of being smooth outside the origin (cf. \cite[Example 5.1.2]{BonLanUgu}).  It induces the homogeneous distance 
    \begin{equation}\label{od}
        \d(y,z)=\|y^{-1}\cdot z\|
    \end{equation}
    for any $y,z\in\G$. It is well known (cf. \cite{Nagel}) that $\d$ and
    $d_{\mathbb G}$ are equivalent distances on $\mathbb G$. Moreover, if $\Om\subseteq \mathbb G$ is any domain, then clearly $d_{\mathbb G}\leq d_\Om$. Thanks to Theorem \ref{sea}, the following holds.

    \begin{proposition}\label{loceq}
        Let $\mathbb{G}$ be a Carnot group of step $k$, and let $\Om\subseteq\mathbb G$ be open and connected. Then $d_\Om$, $d_{\mathbb G}$ and $\d$ are locally equivalent on $\Om$.
    \end{proposition}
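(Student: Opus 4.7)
The plan is to leverage the already-recalled global equivalence of $\d$ and $d_\G$ on $\G$ to reduce the statement to a purely local comparison between $d_\Om$ and $d_\G$. The inequality $d_\G\le d_\Om$ is immediate from the definitions, since every sub-unit curve in $\Om$ is also a sub-unit curve in $\G$, so only the converse bound $d_\Om\le C\,d_\G$ on a relatively compact subdomain requires work.

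To this end, I would fix $\tilde\Om\Subset\Om$ and interpose an open set $\tilde\Om'$ with $\overline{\tilde\Om}\subseteq\tilde\Om'\Subset\Om$. Since $d_\G$ is equivalent to $\d$ and $\d$ induces the Euclidean topology on $\G$, the quantity
\[
\rho:=\inf\{d_\G(x,y)\,:\,x\in\overline{\tilde\Om},\,y\in\G\setminus\tilde\Om'\}
\]
is strictly positive.

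For any $x,y\in\tilde\Om$ with $d_\G(x,y)<\rho$, I would exploit the fact recalled above that $d_\G$ is a geodesic distance: selecting a unit-speed $d_\G$-geodesic $\ga:[0,d_\G(x,y)]\longrightarrow\G$ from $x$ to $y$, property \eqref{onlitool} yields $d_\G(\ga(t),x)=t<\rho$, so $\ga$ remains in $\tilde\Om'\subseteq\Om$. Being $d_\G$-Lipschitz, $\ga$ is horizontal, hence an admissible competitor in the definition of $d_\Om$, which gives $d_\Om(x,y)\le L_\Om(\ga)=d_\G(x,y)$. When instead $d_\G(x,y)\ge\rho$, Theorem \ref{sea}(ii) applied to $\tilde\Om$, together with the Euclidean boundedness of $\tilde\Om$, supplies a constant $M>0$ such that $d_\Om(x,y)\le M\le (M/\rho)\,d_\G(x,y)$. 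Combining the two regimes produces $d_\Om\le\max\{1,M/\rho\}\,d_\G$ on $\tilde\Om$, and local equivalence with $\d$ then follows from its global equivalence with $d_\G$.

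The only point that requires any care is the positivity of $\rho$: this rests on the fact that $d_\G$, being equivalent to $\d$, induces the Euclidean topology on $\G$, so that $\overline{\tilde\Om}$ and $\G\setminus\tilde\Om'$ are disjoint $d_\G$-closed sets with the former $d_\G$-compact. The remainder is a direct assembly of Theorem \ref{sea}(ii), the geodesic property of $d_\G$, and the localisation built into $\tilde\Om'$.
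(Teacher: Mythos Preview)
Your argument is correct. The core idea---that $d_\G$-geodesics between sufficiently close points of $\Om$ remain inside $\Om$, so that $d_\Om$ and $d_\G$ agree there---is exactly what the paper uses as well; the paper isolates this fact as a separate statement (Lemma \ref{optdg}) and then derives the proposition by contradiction, assuming the existence of sequences $(x_h)_h,(y_h)_h\subseteq\tilde\Om$ with $d_\G(x_h,y_h)<\tfrac{1}{h}d_\Om(x_h,y_h)$ and reaching a contradiction once $x_h,y_h$ concentrate near some $x_0\in\Om$. Your direct route, with the intermediate open set $\tilde\Om'$, the explicit separation distance $\rho$, and the two-regime split $d_\G(x,y)<\rho$ versus $d_\G(x,y)\ge\rho$, is a cleaner packaging of the same mechanism: it produces an explicit local constant $\max\{1,M/\rho\}$ rather than a pure existence statement, and avoids the double contradiction. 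The paper's version has the mild advantage of recording Lemma \ref{optdg} for later reuse, but analytically the two arguments are equivalent.
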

    To prove Proposition \ref{loceq}, we need the following result.
    \begin{lemma}\label{optdg}
        Let $\mathbb{G}$ be a Carnot group of step $k$, and let $\Om\subseteq\mathbb G$ be open and connected. Then, for any $x_0\in\Om$, there exists $r>0$ such that, for any $x,y\in B_r(x_0,d_{\mathbb G})$, any optimal curve for $d_{\mathbb G}(x,y)$ lies in $\Om$.
    \end{lemma}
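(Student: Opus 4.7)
The plan is to exploit the fact that $d_{\mathbb G}$ is a genuinely geodesic distance and that, by the identity \eqref{onlitool}, every point on an optimal curve from $x$ to $y$ lies in the closed $d_{\mathbb G}$-ball of radius $d_{\mathbb G}(x,y)$ centered at $x$. If $x,y$ are both chosen close to $x_0$, the triangle inequality then confines the whole optimal curve to a small $d_{\mathbb G}$-ball around $x_0$, which I will arrange to sit inside $\Omega$.

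Concretely, I would first fix $R>0$ such that $\overline{B_R(x_0,d_{\mathbb G})}\subseteq\Omega$. Once $R$ is available, I set $r:=R/3$, pick $x,y\in B_r(x_0,d_{\mathbb G})$ and invoke the geodesic character of $d_{\mathbb G}$ on the whole group (recalled above the statement) to obtain an optimal sub-unit curve $\gamma\colon[0,1]\longrightarrow\G$ joining $x$ to $y$. For every $t\in[0,1]$, \eqref{onlitool} gives
\[
d_{\mathbb G}(x,\gamma(t))\;\leq\;d_{\mathbb G}(x,\gamma(t))+d_{\mathbb G}(\gamma(t),y)\;=\;d_{\mathbb G}(x,y)\;<\;2r,
\]
and one triangle inequality then yields $d_{\mathbb G}(x_0,\gamma(t))<r+2r=R$, so $\gamma(t)\in\Omega$. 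This is enough to conclude, since $\gamma$ was an arbitrary optimal curve.

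The real content of the argument lies in the very first step, namely producing $R>0$ with $\overline{B_R(x_0,d_{\mathbb G})}\subseteq\Omega$. This is where the sub-Riemannian structure enters: one needs to know that $d_{\mathbb G}$-balls form a neighborhood basis of $x_0$ in the Euclidean topology, so that openness of $\Omega$ actually provides such an $R$. I would derive this from Theorem \ref{sea}(ii) applied on a bounded subdomain $\tilde\Omega\Subset\G$ containing $x_0$: the estimate $|x-x_0|\leq C_{\tilde\Omega}\,d_{\mathbb G}(x,x_0)$ shows that small $d_{\mathbb G}$-balls around $x_0$ are contained in comparably small Euclidean balls, which in turn can be taken inside $\Omega$ by openness. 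Apart from this topological comparison, the remainder is only a combination of \eqref{onlitool} and the triangle inequality, so I do not expect further obstacles arising from the lack of a Riemannian metric.
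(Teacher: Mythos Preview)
Your argument is correct. Both your proof and the paper's rely on the same two ingredients: the identity \eqref{onlitool} for optimal curves, and the fact that one can fit a compact $d_{\mathbb G}$-ball around $x_0$ inside $\Omega$ (which, as you note, follows from the topological equivalence in Theorem \ref{sea}). The difference lies in how these are combined. The paper argues by contradiction: assuming no such $r$ exists, it produces sequences $x_h,y_h\to x_0$ whose optimal curves touch $\partial\Omega$ at some $z_h$, then uses \eqref{onlitool} and the positive distance $D$ between $\partial\Omega$ and $\partial B_R(x_0,d_{\mathbb G})$ to obtain $d_{\mathbb G}(x_h,y_h)\geq 2D>0$, contradicting $d_{\mathbb G}(x_h,y_h)\to 0$. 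Your direct approach is shorter: once $\overline{B_R(x_0,d_{\mathbb G})}\subseteq\Omega$ is secured, the choice $r=R/3$ together with \eqref{onlitool} and two triangle inequalities already confines any optimal curve to $B_R(x_0,d_{\mathbb G})$, with no need for a compactness or contradiction step. The paper's argument, on the other hand, is closer in spirit to the proofs of the subsequent localization lemmas (Lemma \ref{distaloc} and Lemma \ref{equivsigma}), where a direct radius choice is less immediate and the contradiction scheme becomes the natural template.
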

    \begin{proof}
        Assume by contradiction that there exists $x_0\in\Om$ and sequences $(x_h)_h,(y_h)_h$, $(\ga_h)_h$ such that $d_{\mathbb G}(x_0,x_h),d_{\mathbb G}(x_0,y_h)<\frac{1}{h}$, $\gamma_h:[0,T_h]\longrightarrow \mathbb G$ is sub-unit and is optimal for $d_{\mathbb G}(x_h,y_h)$, and there exists $0<t_h<T_h$ such that $z_h:=\ga_h(t_h)\in\partial\Om$. Up to a subsequence, there exists $R>0$ such that $(x_h)_h,(y_h)_h\subseteq B_R(x_0,d_\G)\Subset\Om$. Set
        \begin{equation*}
            D=\inf\{d_\G(z,w)\,:\,z\in\partial\Om,\;w\in\partial B_R(x_0,d_\G)\}.
        \end{equation*}
         Since $B_R(x_0,d_\G)\Subset\Om$, then $D>0$. On one hand
        \begin{equation*}
            d_{\mathbb G}(x_h,y_h)\leq  d_\G(x_h,x_0)+d_\G(x_0,y_h)\to 0
        \end{equation*}
    as $h\to\infty$. On the other hand, in view of the choice of $\gamma_h$  and \eqref{onlitool}, 
    \begin{equation*}
        d_\G(x_h,y_h)=d_\G(x_h,z_h)+ d_\G(z_h,y_h)\geq 2D>0.
    \end{equation*}
    A contradiction then follows.
    \end{proof}
    \begin{proof}[Proof of Proposition \ref{loceq}]
         As already pointed out, by definition it clearly follows that $d_\Om\geq d_\G$.  Therefore, we are left to show that for any domain $\tilde\Om\Subset\Om$ there exists $K_{\tilde\Om}>0$ such that $d_{\mathbb G}\ge\ K_{\tilde\Om}d_\Om$. Assume by contradiction that there exists a domain $\tilde\Om\Subset\Om$ and two sequences $(x_h)_h,(y_h)_h\subseteq\tilde\Om $ such that
        $$d_{\mathbb G}(x_h,y_h)<\frac{1}{h}d_\Om(x_h,y_h)$$ for any $h \in \mathbb{N}$. Let $D$ be the Euclidean diameter of $\tilde\Om$. Since $\tilde\Om$ is bounded, then $D<\infty$. Thanks to Theorem \ref{sea}, we have that
        \begin{equation*}
            d_\G(x_h,y_h)<\frac{1}{h}d_\Om(x_h,y_h)\leq\frac{1}{h}\sup_{x,y\in\tilde\Om}d_\Om(x,y)\leq \frac{C_{\tilde\Om}}{h}\sup_{x,y\in\tilde\Om}|x-y|^{\frac{1}{k}}\leq \frac{C_{\tilde\Om}D^\frac{1}{k}}{h}.
        \end{equation*}
        This implies that $d_{\mathbb G}(x_h,y_h)\to 0$. Therefore, up to a subsequence, we can assume that $x_h,y_h\to x_0$ for some $x_0\in\Om$. Choose $r$ as in Lemma \ref{optdg}, and assume up to a subsequence that $(x_h)_h,(y_h)_h\subseteq B_r(x_0,d_\G)$. Then Lemma \ref{optdg} implies that 
        \begin{equation*}
            d_{\mathbb G}(x_h,y_h)=d_\Om(x_h,y_h),
        \end{equation*}
        a contradiction. 
    \end{proof}
\subsection{Calculus on Carnot groups}\label{calculus}
Given $u\in L^1_{\loc}(\Om)$, we define its distributional \emph{horizontal gradient} $Xu$ by
\begin{equation*}
    Xu(\varphi):=-\int_\Om u\sum_{j=1}^mX_j\varphi_j\,dx
\end{equation*}
for any $\varphi=(\varphi_1,\ldots,\varphi_m)\in C^\infty_c(\Om,\rr^m)$ (cf. \cite{MR3587666}). This notion allows to define in the obvious way the classical functional spaces $W^{1,\infty}_X(\Om)$, $W^{1,\infty}_{X,\loc}(\Om)$ and $C^1_X(\Om)$. More specifically, we say that $u\in W^{1,\infty}_X(\Om)$ if $u\in L^\infty(\Om)$ and $Xu\in L^\infty(\Om,\rr^m)$, and that $u\in C^1_X(\Om)$ if $u$ is continuous and $Xu$ is continuous. The space $W^{1,\infty}_{X,\loc}(\Om)$ is defined accordingly. If $d$ is a distance, we define the space
\begin{equation*}
    \lip(\Om,d):=\left\{u\in C(\Om)\,:\,\text{there exists $C>0$ such that }\sup_{x\neq y\in\Om}\frac{|u(x)-u(y)|}{d(x,y)}\leq C\right\}.
\end{equation*}
The space $\lip_{\loc}(\Om,d)$ is defined in the obvious way.
It is well known (cf. \cite{GN}) that
\begin{equation*}
    W^{1,\infty}_{X,\loc}(\Omega)=\lip_{\loc}(\Om,d_\Om).
\end{equation*}
We conclude this section recalling the following differentiability result due to Pansu (cf. \cite{Pansu}). 
\begin{theorem}\label{Pansu}
Let $\Omega\subset \mathbb{G}$ be an open set. Let $u\in W^{1,\infty}_{X,\loc}(\Om)$. Then $u$ is \emph{Pansu-differentiable} at almost every $x_0 \in \Omega$, that is 
\begin{equation*}
    \lim_{x\to x_0}\frac{u(x)-u(x_0)-\langle Xu(x_0),\pi(x_0^{-1}\cdot x)\rangle}{d_\Om(x_0,x)}=0
\end{equation*}
for almost every $x_0\in\Om$.
\end{theorem}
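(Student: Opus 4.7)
The plan is to adapt Pansu's original blow-up argument to the scalar setting at hand. By the equivalence $W^{1,\infty}_{X,\loc}(\Om) = \lip_{\loc}(\Om, d_\Om)$ and Proposition \ref{loceq}, the function $u$ is locally Lipschitz with respect to $\d$ (and hence with respect to $d_\G$) on $\Om$; in particular, for each $x_0 \in \Om$ there exist $r_0, L > 0$ such that, for every $0 < r < r_0$, the rescaled functions
\[
u_{x_0, r}(y) := \frac{u(x_0 \cdot \delta_r(y)) - u(x_0)}{r}
\]
are well-defined and uniformly $L$-Lipschitz with respect to $d_\G$ on any prescribed ball $B_R(0, d_\G)$. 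The two ingredients that drive the argument are the left-invariance of $X_1, \ldots, X_m$ and their homogeneity of degree one under $\delta_r$. Together they yield the distributional identity
\[
X_j u_{x_0, r}(y) = (X_j u)(x_0 \cdot \delta_r(y))
\]
for a.e.\ $y$, which controls the horizontal gradients of the blow-ups.

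First I would extract limits. Since $u_{x_0, r}(0) = 0$ and the family $\{u_{x_0, r}\}_r$ is equi-Lipschitz on compact sets, the Ascoli-Arzelà theorem provides, for every sequence $r_k \downarrow 0$, a subsequence along which $u_{x_0, r_k}$ converges locally uniformly on $\G$ to a Lipschitz function $u_{x_0}^\infty$ with $u_{x_0}^\infty(0) = 0$. Next, I would identify this limit at a good $x_0$. Choosing $x_0$ to be a simultaneous Lebesgue point of each component of $Xu$ excludes only a negligible set, since $\G$ is a doubling metric measure space and Lebesgue differentiation applies. A change of variables $z = x_0 \cdot \delta_r(y)$ converts integrals of $X_j u_{x_0, r}$ over a fixed ball into mean integrals of $X_j u$ on $d_\G$-balls around $x_0$ shrinking at rate $r$; hence the Lebesgue point property gives $X_j u_{x_0, r} \to X_j u(x_0)$ in $L^1_{\loc}(\G)$. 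Passing to the limit in the distributional definition shows that $u_{x_0}^\infty$ has constant horizontal gradient equal to $Xu(x_0)$ on all of $\G$.

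The main obstacle, and the step where the stratified structure enters decisively, is to conclude that any continuous function $v$ on $\G$ with constant horizontal gradient $p \in \rr^m$ and $v(0) = 0$ coincides with $y \mapsto \langle p, \pi(y) \rangle$. This is handled by integrating $Xv \equiv p$ along horizontal curves: along every horizontal line issuing from a point the function is affine with slope $p$, and Chow-Rashevskii connectivity, combined with a standard mollification to upgrade to the continuous setting, propagates this rigidity to the whole group. Since the first-layer projection $\pi$ satisfies $\pi(\delta_r(y)) = r\,\pi(y)$ and $\pi(y \cdot z) = \pi(y) + \pi(z)$ on the first stratum (by \eqref{prodincoord}), the map $y \mapsto \langle p, \pi(y) \rangle$ is itself a continuous function with horizontal gradient identically $p$ vanishing at $0$; uniqueness then forces $v(y) = \langle p, \pi(y) \rangle$.

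Once this identification is in place, the subsequential limit is independent of the extracted sequence, so the full limit $\lim_{r \to 0^+} u_{x_0, r}(y) = \langle Xu(x_0), \pi(y) \rangle$ holds locally uniformly on $\G$. Rewriting in the variables $x = x_0 \cdot \delta_r(y)$, one has $x_0^{-1} \cdot x = \delta_r(y)$, $\pi(x_0^{-1} \cdot x) = r\, \pi(y)$, and, using the homogeneity of $d_\G$ under $\delta_r$ together with the local equivalence of $d_\G$ and $d_\Om$ afforded by Proposition \ref{loceq}, the denominator $d_\Om(x_0, x)$ is comparable to $r\, d_\G(0, y)$ for $x$ near $x_0$. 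Combining these with the uniform convergence of $u_{x_0, r}$ yields the Pansu differentiability formula at every Lebesgue point of $Xu$, which is a set of full measure in $\Om$, concluding the proof.
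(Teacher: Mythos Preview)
The paper does not supply its own proof of this statement; Theorem~\ref{Pansu} is simply recalled from Pansu's work and cited. Your sketch reproduces the classical blow-up argument for Pansu differentiability in the scalar case, and the outline is correct: equi-Lipschitz compactness of the rescalings $u_{x_0,r}$, identification of the limiting horizontal gradient at Lebesgue points via the doubling structure of $(\G,d_\G,\mathcal L^n)$, and the rigidity step for Lipschitz functions with constant horizontal gradient are all valid in this setting and fit together as you indicate.

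Two small points worth tightening if you write this out in full. First, the passage from locally uniform convergence of $u_{x_0,r}$ to the differentiability quotient is cleanest if you parametrise points near $x_0$ as $x=x_0\cdot\delta_r(y)$ with $y$ on the unit $d_\G$-sphere and $r=d_\G(x_0,x)$, so that the denominator is exactly $r$ rather than merely comparable; the replacement of $d_\G$ by $d_\Om$ then follows from Lemma~\ref{optdg}, which gives local equality (not just equivalence) of the two distances near $x_0$. Second, the rigidity step can be done without mollification by observing that $w:=u_{x_0}^\infty-\langle Xu(x_0),\pi(\cdot)\rangle$ is $d_\G$-Lipschitz with $Xw=0$ a.e., hence $w\in W^{1,\infty}_X$ with vanishing horizontal gradient, and is therefore constant on the connected group $\G$; this avoids the somewhat informal appeal to integrating along individual horizontal curves.
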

\subsection{Subgradient in Carnot groups}
In this section we recall some properties of the so-called \emph{$(X,N)$-subgradient} of a function $u\in W_{X,\emph{\loc}}^{1,\infty}(\Om)$, introduced in \cite{PVW} as a generalization of the classical Clarke's subdifferential (cf. \cite{clarke}) and defined by
\begin{equation*}
	\sub (x):=\overline{co}\left\{\lim_{n\to\infty}Xu(y_n)\,: \,y_n\to x,\,y_n\notin N\text{ and }\lim_{n\to\infty}Xu(y_n) \text{ exists}\right\}
\end{equation*}
for any $x\in \Om$, where $N\subseteq\Om$ is any Lebesgue negligible set containing the non-Lebesgue points of $Xu$ and $\overline{co}
$ denotes the closure of the convex hull. In the sequel we will need the following result, which can be found as \cite[Proposition 2.5]{PVW}.
\begin{proposition}\label{w2}
	Let  $u\in W^{1,\infty}_{X,\loc}(\Om)$ and let $\gamma:[ 0,T ]\longrightarrow\Om$ be a horizontal curve as in \eqref{curvaorizzontale}.
	The function $t\mapsto u(\gamma(t))$ belongs to $W^{1,\infty}( 0,T )$, and there exists a function $\vartheta\in L^\infty(( 0,T ),\rr^m)$ such that 
	\begin{equation*}
		\frac{d (u \circ \gamma)(t)}{d t}= \langle \vartheta(t), a(t) \rangle
	\end{equation*}
	for a.e. $t\in ( 0,T )$. Moreover
	\begin{equation*}
		\vartheta(t)\in\sub (\gamma (t))
	\end{equation*}
	for a.e. $t\in ( 0,T ).$
\end{proposition}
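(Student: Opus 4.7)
The proof splits into two parts: the regularity statement $u\circ\gamma\in W^{1,\infty}(0,T)$, and the existence of $\vartheta$ with the desired derivative representation and subgradient membership.

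For the regularity, I would first note that since $a\in L^\infty((0,T),\rr^m)$, the curve $\gamma$ is sub-unit up to a rescaling and thus $d_\Om$-Lipschitz (as the paper already observes, horizontal curves and $d_\Om$-Lipschitz curves coincide). Combining this with the identification $W^{1,\infty}_{X,\loc}(\Om)=\lip_\loc(\Om,d_\Om)$ and the fact that $\gamma([0,T])$ is contained in a compact subset of $\Om$, the composition $u\circ\gamma$ is Lipschitz on $[0,T]$, hence belongs to $W^{1,\infty}(0,T)$ and is differentiable almost everywhere.

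For the representation, the natural approach is mollification by Euclidean convolution: set $u^\eta := u*\rho_\eta$. Since the coefficients of $X_1,\dots,X_m$ are polynomials, a Friedrichs-type commutator estimate yields that $u^\eta\to u$ locally uniformly, $Xu^\eta\to Xu$ in $L^1_\loc$, and $\|Xu^\eta\|_{L^\infty(K)}$ is uniformly bounded on any compact $K\Subset\Om$ for $\eta$ small. For smooth $u^\eta$ the classical chain rule and the horizontality of $\gamma$ give
\begin{equation*}
    \tfrac{d}{dt}(u^\eta\circ\gamma)(t)=\sum_{i=1}^m a_i(t)\,X_iu^\eta(\gamma(t))=\langle Xu^\eta(\gamma(t)),a(t)\rangle.
\end{equation*}
Setting $\vartheta^\eta(t):=Xu^\eta(\gamma(t))$ produces a family that is uniformly bounded in $L^\infty((0,T),\rr^m)$, so by Banach--Alaoglu some subsequence converges weakly-$*$ to a limit $\vartheta\in L^\infty((0,T),\rr^m)$. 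Passing to the limit in the integral identity $(u^\eta\circ\gamma)(t)-(u^\eta\circ\gamma)(s)=\int_s^t\langle\vartheta^\eta(\tau),a(\tau)\rangle\,d\tau$ (uniform convergence on the left, weak-$*$ convergence tested against $a\in L^\infty\subseteq L^1_\loc$ on the right) identifies $(u\circ\gamma)'(t)=\langle\vartheta(t),a(t)\rangle$ for a.e.\ $t$.

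The main obstacle is showing $\vartheta(t)\in\partial_{X,N}u(\gamma(t))$ a.e. For this I would use Mazur's lemma: a sequence of finite convex combinations $\tilde\vartheta^k=\sum_{j\ge k}\alpha^j_k\vartheta^{\eta_j}$ converges to $\vartheta$ strongly in $L^1((0,T),\rr^m)$, hence pointwise a.e.\ after a further subsequence. Now $Xu^{\eta_j}(\gamma(t))$ is, modulo a Friedrichs commutator term that vanishes in $L^1_\loc$, a weighted average of $Xu(z)$ for $z$ in the Euclidean ball $B^{\text{Eucl}}_{\eta_j}(\gamma(t))\setminus N$, and therefore lies in $\overline{co}\{Xu(z)\,:\,z\in B^{\text{Eucl}}_{\eta_j}(\gamma(t))\setminus N\}$ up to an error tending to $0$. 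Since $\partial_{X,N}u(\gamma(t))=\bigcap_{\delta>0}\overline{co}\{Xu(z)\,:\,z\in B^{\text{Eucl}}_{\delta}(\gamma(t))\setminus N\}$ by its very definition, the pointwise limit $\vartheta(t)$ lies in this intersection for a.e.\ $t$, concluding the proof. The delicate point is controlling the commutator error in $L^1$ jointly with the Mazur selection in order to pass from the mollified averages back to the pointwise convex hull defining $\partial_{X,N}u$; this is the step where the polynomial structure of the coefficients of $X_i$ is essential.
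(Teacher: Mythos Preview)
The paper does not give its own proof of this proposition; it is quoted from \cite[Proposition~2.5]{PVW} and used as a black box. Your overall strategy---mollify, apply the smooth horizontal chain rule along $\gamma$, extract a weak-$*$ limit $\vartheta$ in $L^\infty((0,T),\rr^m)$, then upgrade via Mazur to a.e.\ pointwise convergence and place the limit inside the generalized gradient---is the standard one and is essentially the argument carried out in \cite{PVW}. The regularity step and the identification of $(u\circ\gamma)'$ via the integral identity are correct as written.

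There is, however, one genuine gap in the final step. You mollify by \emph{Euclidean} convolution and assert that $Xu^{\eta_j}(\gamma(t))$ is, ``modulo a Friedrichs commutator term that vanishes in $L^1_{\loc}$'', an average of $Xu$ over a small ball around $\gamma(t)$. The Friedrichs commutator indeed tends to zero in $L^p_{\loc}(\Om)$, but this says nothing about its values along the curve $\gamma$, whose image has Lebesgue measure zero in $\Om$. You therefore cannot conclude that the commutator error at the points $\gamma(t)$ tends to zero, and the inclusion $\vartheta(t)\in\partial_{X,N}u(\gamma(t))$ is not justified. The clean repair---and the device used in \cite{PVW}, as well as elsewhere in the present paper (cf.\ the remark following Proposition~\ref{viscocomp} and \cite[Proposition~1.20]{FS})---is to replace Euclidean convolution by \emph{group} convolution. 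Since $X_1,\dots,X_m$ are left-invariant, group convolution commutes with them exactly, so $X_iu_\eta=(X_iu)_\eta$ with no commutator whatsoever; then $Xu_\eta(\gamma(t))$ is literally a weighted average of $Xu$ over a shrinking group-neighbourhood of $\gamma(t)$, and your Mazur/convex-hull argument goes through verbatim.
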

\section{Some properties of $\sigma^\star$ and $\ol$}\label{olprop}
Here and in the following we will be focused on Hamilton--Jacobi equations as in \eqref{eq:HJI}, that is
\begin{equation}\tag{H-J}
\label{eq:HJ}
H (x,X u)=0
\end{equation}
on $\Om$, where $\Om$ is a subdomain of $\G$ and $H$ satisfies the structural assumptions (H).
Since the notion of Monge solution heavily depends on the properties of the associated optical length function, and hence on the properties of $\sigma^\star$, let us make some preliminary considerations on these objects.
First, notice that condition $(\text{H}_3)$ is equivalent to the estimate
\begin{equation}
\label{equistima}
    \frac{1}{\alpha} |v|_x \leq \sigma^\star(x, v) \leq \alpha |v|_x \quad \mbox{for every } (x, v) \in H \mathbb{G}.
\end{equation}
Moreover the following simple result, which is the sub-Riemannian analogous of \cite[Lemma 4.2]{monge}, will be useful to state the equivalence between Monge and viscosity solutions in the continuous setting. We refer to \cite{esspinpas} for an account of sub-Finsler metrics.
\begin{lemma}
\label{lm:subFinul}
 Let $\Om\subseteq \G$ be an open set, and let $H\Om$ be the horizontal bundle defined in Section \ref{carnotgroups}. 
$\sigma^\star: H \Om \longrightarrow \mathbb{R} $ is a sub-Finsler convex metric.
Moreover, for any $v\in\rr^m$, the following hold.
\begin{itemize}
\item[(i)] If $H$ is upper semicontinuous on $H \Om$, then $\sigma^\star (\cdot, v)$ is lower semicontinuous on $\Om$.
\item[(ii)]  If $H$ is lower semicontinuous on $H \Om$, then $\sigma^\star(\cdot,v)$ is upper semicontinuous on $\Om$.
\end{itemize}
\end{lemma}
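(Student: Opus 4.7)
The proof splits naturally into three tasks: first checking the analytic structure of $\sigma^\star(x,\cdot)$ together with the two-sided bound \eqref{equistima}, then establishing each of the two semicontinuity statements.

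For the sub-Finsler convex metric claim, the plan is to observe that, for each fixed $x\in\Om$, $\sigma^\star(x,\cdot)$ is by \eqref{supportfunct} the support function of the compact convex set $-Z(x)$. Standard facts on support functions of closed convex sets then give positive homogeneity (in the non-negative scalars) and subadditivity in the second variable for free, and the bound \eqref{equistima} is a direct consequence of $(\mathrm{H}_3)$: the inclusion $\hat B_{1/\alpha}(0)\subseteq Z(x)$ produces $\sigma^\star(x,v)\ge |v|/\alpha$ by testing with $\xi=-v/(\alpha|v|)$, while $Z(x)\subseteq \hat B_\alpha(0)$ combined with the Cauchy--Schwarz inequality yields $\sigma^\star(x,v)\le\alpha|v|$. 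In particular $\sigma^\star(x,\cdot)$ is strictly positive away from the origin, which together with convexity promotes $\sigma^\star(x,\cdot)$ to a (possibly asymmetric) norm on each fiber $H_x\G\cong\rr^m$.

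To prove (i), I fix $v\in\rr^m$, take $x_n\to x$ in $\Om$, and argue by \emph{inner approximation} of $Z(x)$. Since $Z(x)$ is compact convex and $0\in\hat B_{1/\alpha}(0)\subseteq\mathrm{int}(Z(x))$ by $(\mathrm{H}_3)$, for any $\xi\in Z(x)$ the points $\xi_t:=(1-t)\xi$ lie in $\mathrm{int}(Z(x))$ for $t\in(0,1)$. The combination of $(\mathrm{H}_2)$ with $Z(x)=\{H(x,\cdot)\le 0\}$ forces $H(x,\xi_t)<0$ strictly, because on $\mathrm{int}(Z(x))$ the value $H(x,\xi_t)$ cannot vanish (that would place $\xi_t$ on $\partial Z(x)$). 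Upper semicontinuity of $H$ on $H\Om$ then implies $H(x_n,\xi_t)\le 0$ for $n$ large, so $\xi_t\in Z(x_n)$ and therefore $\sigma^\star(x_n,v)\ge -\langle\xi_t,v\rangle$. Sending first $n\to\infty$, then $t\to 0^+$, and finally optimizing over $\xi\in Z(x)$ gives $\liminf_n\sigma^\star(x_n,v)\ge\sigma^\star(x,v)$.

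For (ii) a direct compactness argument suffices. Given $x_n\to x$, one picks, along a subsequence realizing $\limsup_n\sigma^\star(x_n,v)$, vectors $\xi_n\in Z(x_n)$ attaining $\sigma^\star(x_n,v)=-\langle\xi_n,v\rangle$, which exist as $Z(x_n)$ is compact. Since $|\xi_n|\le\alpha$ by $(\mathrm{H}_3)$, a further subsequence converges to some $\xi_\infty$ with $|\xi_\infty|\le\alpha$. Lower semicontinuity of $H$ then gives $H(x,\xi_\infty)\le\liminf_n H(x_n,\xi_n)\le 0$, so $\xi_\infty\in Z(x)$ and $\sigma^\star(x,v)\ge -\langle\xi_\infty,v\rangle=\limsup_n\sigma^\star(x_n,v)$. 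The main obstacle in the whole proof is really the inner approximation in (i): it crucially relies on the identity $\mathrm{int}(Z(x))=\{H(x,\cdot)<0\}$ encoded in $(\mathrm{H}_2)$, without which the upper semicontinuity of $H$ would not propagate membership in $Z(x)$ to membership in $Z(x_n)$.
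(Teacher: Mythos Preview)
Your argument is correct in all three parts. The paper itself does not supply a proof of this lemma: it merely calls it ``the sub-Riemannian analogous of \cite[Lemma 4.2]{monge}'' and refers to \cite{esspinpas} for background on sub-Finsler metrics, so there is no proof to compare against. Your write-up fills in precisely the details one would expect: the support-function identification gives the convex-metric structure and the bound \eqref{equistima} at once, the inner approximation $\xi_t=(1-t)\xi$ together with the identity $\operatorname{int}Z(x)=\{H(x,\cdot)<0\}$ (which is exactly what $(\mathrm H_2)$ encodes) handles~(i), and the compactness/diagonal argument handles~(ii). Nothing is missing.
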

Regarding the optical length function, an easy computation shows that
\begin{equation}\label{induceddd2}
    \ol(x,y)=\inf \left  \{ \int_0^T \sigma^\star(\ga(t),\dot{\ga}(t)) \,dt    \ :\,\ga:[0,T]\longrightarrow\Om,\,\ga\in\mathcal{H}(\Om),\,\ga(0)=x\, ,\ga(T)=y \right\}
\end{equation}
for any $x,y\in\Om$.
The quantity \eqref{induceddd2} is well-defined, both because the map $t \mapsto \sigma^\star(\gamma(t), \dot{\gamma}(t))$ is Borel measurable on the horizontal bundle, and because, as already mentioned, every two points in $\Om$ can be connected by a horizontal curve.
However, $\ol$ can presents some pathological behaviour without some semicontinuity assumptions (see \cite[Example 5.5]{esspinpas}).
Let us discuss some properties of $d_{\sigma^\star}$
which will be useful in the sequel. 
\begin{lemma}\label{prop:optimalpath}
 Let $\alpha$ be as in $(\text{H}_3)$. The following properties hold.
\begin{itemize}
    \item [$(i)$] $\ol$ is a non-symmetric distance on $\Om$.
    \item[$(ii)$] $\ol$ is equivalent to $d_\Om$ on $\Om$, i.e.
    \begin{equation*}
        \frac{1}{\alpha} \, d_{\Om} (x,y) \le d_{\sigma^\star}(x,y) \le \alpha \, d_{\Om}(x,y)
    \end{equation*}
     for any $x,y\in\Om$.
    \item[$(iii)$] $\ol$ is $d_\Om$-Lipschitz on $\Om\times\Om$, that is
    \begin{equation*}
        |\ol(x,y)-\ol(z,w)|\leq\alpha(d_\Om(x,z)+d_\Om(y,w))
    \end{equation*}
    for any $x,y,z,w\in\Om$.
\end{itemize}
\end{lemma}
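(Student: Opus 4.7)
The plan is to prove item (ii) first, and then deduce items (i) and (iii) from it by standard length-space manipulations. All three statements are essentially consequences of the two-sided pointwise bound \eqref{equistima}.

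For item (ii), I would take any sub-unit horizontal curve $\gamma:[0,T]\to\Om$ joining $x$ to $y$ and integrate \eqref{equistima} pointwise in $t$ to obtain
$$\frac{1}{\alpha}\,L_\Om(\gamma) \;\leq\; \int_0^T \sigma^\star(\gamma(t),\dot\gamma(t))\,dt \;\leq\; \alpha\,L_\Om(\gamma).$$
Passing to the infimum over all such competitors and using the alternative formulation \eqref{induceddd2} of $\ol$ yields at once $\tfrac{1}{\alpha}\,d_\Om(x,y)\leq \ol(x,y)\leq \alpha\,d_\Om(x,y)$.

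For item (i), non-negativity of $\ol$ follows from $\sigma^\star\geq 0$, while $\ol(x,x)=0$ is witnessed by the constant curve $\gamma\equiv x$. The triangle inequality is obtained by concatenating admissible curves from $x$ to $z$ and from $z$ to $y$, possibly rescaling to a common parameter interval; this operation preserves the sub-Finsler length thanks to the positive $1$-homogeneity of $\sigma^\star(x,\cdot)$ in the velocity variable, a property built into the sub-Finsler norm structure from Lemma \ref{lm:subFinul}. Finally, the non-degeneracy $\ol(x,y)=0\Rightarrow x=y$ follows immediately from item (ii) combined with the fact that $d_\Om$ separates points, by Theorem \ref{sea}(i).

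For item (iii), I apply the triangle inequality from (i) twice to obtain $\ol(x,y)\leq \ol(x,z)+\ol(z,w)+\ol(w,y)$, then bound $\ol(x,z)$ and $\ol(w,y)$ via the upper estimate in (ii), using the symmetry of $d_\Om$ to rewrite $d_\Om(w,y)=d_\Om(y,w)$. This yields $\ol(x,y)-\ol(z,w)\leq \alpha\bigl(d_\Om(x,z)+d_\Om(y,w)\bigr)$, and swapping the roles of $(x,y)$ and $(z,w)$ gives the reverse inequality. The whole proof is largely bookkeeping and no substantial obstacle is expected; the only mildly delicate point is the free switching between the two equivalent formulations \eqref{induceddd} and \eqref{induceddd2} of $\ol$, which is justified by a standard reparameterization argument based on the $1$-homogeneity of $\sigma^\star(x,\cdot)$.
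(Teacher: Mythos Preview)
Your proposal is correct and follows essentially the same approach as the paper: item (ii) is obtained directly from the pointwise bound \eqref{equistima}, item (iii) from the triangle inequality for $\ol$ combined with the upper estimate in (ii), and item (i) from standard length-space arguments (the paper simply cites \cite[Lemma 5.7]{esspinpas} for this part, whereas you sketch the details). The only cosmetic difference in (iii) is that the paper inserts the intermediate point $(z,y)$ rather than splitting via $\ol(x,z)+\ol(z,w)+\ol(w,y)$, but the two computations are equivalent.
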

\begin{proof}
The proof of $(i)$ follows as in \cite[Lemma 5.7]{esspinpas}. $(ii)$ is an easy consequence of estimate \eqref{equistima}. Let us show $(iii)$. To this aim, fix $x,y,z,w\in\Om$. Being $\ol$ a distance and thanks to point $(ii)$, we have that  
\begin{equation*}
\begin{split}
     \ol(x,y)-\ol (z,w)&=\ol(x,y)-\ol(z,y)+\ol(z,y)-\ol(z,w)\\
&\leq\ol(x,z)+\ol(w,y)\\
&\leq\alpha(d_\Om(x,z)+d_\Om(y,w))
\end{split}
\end{equation*}
and,  exchanging the roles of $(z,w)$ and $(x,y)$,
\begin{equation*}
     \ol (z,w)-\ol (x,y)\leq\alpha(d_\Om(x,z)+d_\Om(y,w)).
\end{equation*}
\end{proof}
Therefore $(\Om,\ol)$ is a non-symmetric metric space.  In view of general results in metric spaces (cf. \cite[Proposition 2.4.1]{MR1835418}, $\ol$ is a length distance in the sense of Section \ref{lentandgeodistancesection}.  However, we already know that it is not geodesic in general, since, for instance, $(\Om,d_\Om)$ may not be geodesic. Nevertheless, exploiting standard arguments of analysis in metric spaces (cf. \cite{ambrosio}) it can be shown that $(\Om,\ol)$ is locally geodesic in the following sense. 
\begin{proposition}\label{locgeo}
For any $x_0\in\Om$ there exists $r>0$ such that for any $x,y\in B_r(x_0,d_\Om)$ there exists a horizontal curve  $\ga:[0,1]\longrightarrow\Om$ such that $\ga(0)=x$, $\ga(1)=y$ and 
  $$
        \ol(x,y)=\emph{L}_{\ol}(\gamma),
$$
where $\emph{L}_{\ol}$ is as in \eqref{ldingenlenspace}.
\end{proposition}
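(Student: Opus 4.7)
The plan is a standard compactness-and-lower-semicontinuity argument in length spaces, combined with a careful localization to prevent a minimizing sequence from escaping to $\partial\Om$.

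Fix $x_0\in\Om$. Since $d_\G$ is proper on $\G=\rr^n$ and $\Om$ is open, I can choose $r>0$ so small that $K:=\overline{B_{4\alpha^2 r}(x_0,d_\G)}$ is a compact subset of $\Om$. For any $x,y\in B_r(x_0,d_\Om)$, Lemma \ref{prop:optimalpath}$(ii)$ yields $\ol(x,y)\leq 2\alpha r$. I then take a minimizing sequence $\gamma_n:[0,T_n]\to\Om$ of sub-unit horizontal curves with $\gamma_n(0)=x$, $\gamma_n(T_n)=y$, parameterized at constant $d_\Om$-speed, satisfying $\int_0^{T_n}\sigma^\star(\gamma_n(t),\dot\gamma_n(t))\,dt\to\ol(x,y)$. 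By \eqref{equistima}, $T_n=L_\Om(\gamma_n)\leq \alpha\int_0^{T_n}\sigma^\star(\gamma_n(t),\dot\gamma_n(t))\,dt\leq 3\alpha^2 r$ for $n$ large. The sub-unit property and the triangle inequality give $d_\Om(x_0,\gamma_n(t))\leq r+T_n\leq 4\alpha^2 r$, so $\gamma_n([0,T_n])\subseteq K$, using $d_\G\leq d_\Om$.

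Rescale on $[0,1]$ via $\tilde\gamma_n(s):=\gamma_n(sT_n)$. Since $\tilde\gamma_n$ takes values in $K$ with $|\dot{\tilde\gamma_n}|_{\tilde\gamma_n(s)}\leq T_n$ uniformly in $n$, and since the horizontal frame $X_1,\ldots,X_m$ is bounded on $K$, the Euclidean speeds of $\tilde\gamma_n$ are uniformly bounded. Arzelà-Ascoli thus extracts a subsequence converging uniformly on $[0,1]$ to a continuous curve $\gamma:[0,1]\to K\subseteq\Om$ with $\gamma(0)=x$ and $\gamma(1)=y$. Because horizontal curves coincide with $d_\Om$-Lipschitz ones (cf.\ the remark after \eqref{curvaorizzontale}) and $(\tilde\gamma_n)_n$ is uniformly $d_\Om$-Lipschitz, the limit $\gamma$ is itself $d_\Om$-Lipschitz, hence horizontal.

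To conclude, I invoke the lower semicontinuity of the length functional $L_{\ol}$ with respect to uniform convergence (\cite[Proposition 2.3.4]{MR1835418}, valid in the non-symmetric setting as observed in Section \ref{lentandgeodistancesection}), obtaining $L_{\ol}(\gamma)\leq\liminf_n L_{\ol}(\tilde\gamma_n)=\ol(x,y)$. The reverse inequality follows from the length-space property $\ol=d_{L_{\ol}}$ (\cite[Proposition 2.4.1]{MR1835418}, recalled just before the statement), since $\gamma$ is admissible. The principal technical point is the first step: because $(\Om,d_\Om)$ is not globally geodesic, a compact subset of $\Om$ capable of trapping the whole minimizing sequence exists only for sufficiently small $r$, which is precisely what forces the proposition to be local rather than global.
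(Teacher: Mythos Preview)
Your argument is correct and follows essentially the same compactness--plus--lower-semicontinuity scheme as the paper's proof. The only differences are cosmetic: the paper isolates the localization step as a separate lemma (Lemma~\ref{distaloc}) and runs Arzel\`a--Ascoli directly in the compact metric space $(\overline{B_R(x_0,d_\Om)},\ol)$ with a sequence minimizing $\text{L}_{\ol}$, whereas you localize via the explicit ball $K=\overline{B_{4\alpha^2 r}(x_0,d_\G)}$, use Euclidean Arzel\`a--Ascoli, and work with a sequence minimizing the $\sigma^\star$-integral (which is immediately seen to minimize $\text{L}_{\ol}$ as well, since $\text{L}_{\ol}(\gamma)\le\int\sigma^\star(\gamma,\dot\gamma)\,dt$ for any horizontal $\gamma$ in $\Om$).
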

We first need the following technical lemma,  whose proof is omitted being analogous to the proof of the forthcoming Lemma \ref{equivsigma}. 
\begin{lemma}\label{distaloc}
    For any $x_0\in\Om$, and for any $R>0$ such that $B_R(x_0,d_\Om)\Subset\Om$, there exists $0<r<R$ and $\bar\varepsilon>0$ such that, for any $x,y\in B_r(x_0,d_\Om)$ and for any $0<\varepsilon<\bar\varepsilon$, every horizontal curve $\gamma:[0,1]\longrightarrow \Om$ such that $\gamma(0)=x$, $\gamma(1)=y$ and 
    \begin{equation*}
        \ol(x,y)\geq { \emph{L}_{d_{\sigma*}}}(\gamma)-\varepsilon
    \end{equation*}
     lies in $B_R(x_0,d_\Om)$.
\end{lemma}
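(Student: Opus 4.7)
The plan is to argue by contradiction: fix $x_0 \in \Om$ and $R > 0$ with $B_R(x_0, d_\Om) \Subset \Om$, and suppose that for every candidate threshold there exist $x, y \in B_r(x_0, d_\Om)$, a value $\varepsilon < \bar\varepsilon$, and a horizontal curve $\gamma : [0,1] \to \Om$ joining $x$ to $y$, with $\ol(x,y) \geq \mathrm{L}_{\ol}(\gamma) - \varepsilon$, that nonetheless escapes $B_R(x_0, d_\Om)$. I would then choose explicit thresholds $r = r(R, \alpha)$ and $\bar\varepsilon = \bar\varepsilon(R, \alpha)$ for which this situation is impossible, by pitting a geometric lower bound on the length of $\gamma$ against the almost-minimality upper bound.

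The first step is to locate an \emph{escape point}. Since $\gamma$ is horizontal in $\Om$, it is $d_\Om$-Lipschitz by the characterization recalled in Section \ref{prel}, so the map $t \mapsto d_\Om(x_0, \gamma(t))$ is continuous. Because $d_\Om(x_0, \gamma(0)) \leq r < R$ while $\sup_t d_\Om(x_0, \gamma(t)) \geq R$ by the escape hypothesis, the intermediate value theorem provides $t^\star \in (0,1]$ with $d_\Om(x_0, z) = R$, where $z := \gamma(t^\star)$.

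The second step is the length comparison. By additivity of the metric length along concatenations (cf. the construction in Section \ref{lentandgeodistancesection}) and the very definition of $\ol$,
\[
\mathrm{L}_{\ol}(\gamma) \;\geq\; \mathrm{L}_{\ol}(\gamma|_{[0,t^\star]}) + \mathrm{L}_{\ol}(\gamma|_{[t^\star,1]}) \;\geq\; \ol(x, z) + \ol(z, y).
\]
By Lemma \ref{prop:optimalpath}(ii) and the triangle inequality for $d_\Om$, each summand is bounded below by $(R - r)/\alpha$, yielding $\mathrm{L}_{\ol}(\gamma) \geq 2(R - r)/\alpha$. On the other hand, the almost-minimality hypothesis together with Lemma \ref{prop:optimalpath}(ii) gives
\[
\mathrm{L}_{\ol}(\gamma) \;\leq\; \ol(x,y) + \varepsilon \;\leq\; \alpha\, d_\Om(x,y) + \varepsilon \;\leq\; 2\alpha r + \varepsilon.
\]
Combining the two bounds forces $\varepsilon \geq 2(R - r(1 + \alpha^2))/\alpha$. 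Choosing $r := R/(2(1 + \alpha^2))$ makes the right-hand side equal to $R/\alpha$, and setting $\bar\varepsilon := R/\alpha$ produces the contradiction whenever $\varepsilon < \bar\varepsilon$.

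I do not expect a serious obstacle: the whole argument reduces to the bilateral control of $\ol$ by $d_\Om$ already recorded in Lemma \ref{prop:optimalpath}(ii), the additivity of metric length along concatenated curves, and the continuity of $t \mapsto d_\Om(x_0,\gamma(t))$. The only minor technical care needed is to observe that the restriction of a horizontal curve in $\Om$ to a subinterval is again horizontal in $\Om$, so that $\ol$ and $\mathrm{L}_{\ol}$ are meaningful on the subcurves; this is immediate. The bulk of the work is simply bookkeeping to ensure the thresholds $r$ and $\bar\varepsilon$ depend only on $R$ and $\alpha$, as the statement requires.
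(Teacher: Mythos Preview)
Your argument is correct and follows essentially the same strategy as the paper. The paper omits the proof of this lemma, declaring it analogous to that of Lemma~\ref{equivsigma}; that proof likewise proceeds by contradiction, picks an escape point $z_h\in\partial B_R(x_0,d_\Om)$ along an almost-minimising curve, and compares the resulting lower bound on the curve's length (via the $\frac{1}{\alpha}$-comparability with the Carnot--Carath\'eodory distance) against the upper bound coming from almost-minimality. The only notable difference is that the paper runs a sequential contradiction (letting $r,\bar\varepsilon\to 0$ along a sequence and invoking Proposition~\ref{loceq} to pass between $d_\G$ and $d_\Om$), whereas you extract explicit thresholds $r=R/(2(1+\alpha^2))$ and $\bar\varepsilon=R/\alpha$ directly; since in the present lemma the curves already lie in $\Om$, you can work with $d_\Om$ throughout and bypass Proposition~\ref{loceq}, which makes your version slightly cleaner and quantitative.
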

\begin{proof}[Proof of Proposition \ref{locgeo}]
Let $x_0\in\Om$ and $R>0$ be such that $B_R(x_0,d_\Om)\Subset\Om$. Then let $r>0$ be as in Lemma \ref{distaloc}. Let $x, y\in B_r(x_0,d_\Om)$ and let $(\gamma_h)_h$ be a sequence of horizontal curves such that $\ga_h(0)=x$, $\ga_h(1)=y$ and 
    \begin{equation}\label{lsc}
        {\text{L}_{d_{\sigma*}}}(\ga_h)\leq \ol (x,y)+\frac{1}{h}.
    \end{equation}
    In view of Lemma \ref{distaloc}, we can assume that $\ga_h([0,1])\subseteq\overline{B_R(x_0,d_\Om)}\subseteq\Om$ for any $h\in\mathbb N$.  Clearly $(\overline{B_R(x_0,d_\Om)},\ol)$ is a compact metric space and the sequence $(\ga_h)_{h\in \nn}$ is uniformly bounded. Arguing \emph{verbatim} as in the proof of \cite[Theorem 4.3.2]{ambrosio},  $(\ga_h)_{h\in \nn}$ is also equicontinuous with respect to $\ol$. Therefore
    Ascoli--Arzelà's Theorem implies the existence of a horizontal curve $\gamma:[0,1]\longrightarrow \Om$ such that $(\ga_h)_h$ converges uniformly to $\ga$. 
   In particular, $\gamma(0)=x$ and $\gamma(1)=y$.
   Hence, combining \eqref{lengthspacedef} and  \eqref{lsc}  with the lower semicontinuity of $\text{L}_{d_{\sigma^\star}}$ (cf. Section \ref{lentandgeodistancesection}) we infer that
     \begin{equation*}
         \ol (x,y) \leq \text{L}_{d_{\sigma^\star}}(\gamma)\leq\liminf_{h\to\infty}\text{L}_{d_{\sigma^\star}}(\gamma_h) \leq \liminf_{h\to\infty}\left ( \ol (x,y)+\frac{1}{h}\right)=\ol (x,y).
     \end{equation*}
     Therefore we conclude that $\gamma$ is optimal for $\ol(x,y)$, and the thesis follows.
\end{proof}
\begin{proposition}\label{stimapermongeimplicavisc}
    Assume that $H$ is upper semicontinuous on $H \Om$.
    Then it holds that
    \begin{equation*}
        \liminf_{t\to 0^+}\frac{d_{\sigma^\star}(x,x\cdot\delta_t(\xi,\eta))}{t}\geq \sigma^\star (x,\xi)
    \end{equation*}
    for any $x\in\Om$, $\xi\in\rr^m$ and $\eta\in \rr^{n-m}$.
    \end{proposition}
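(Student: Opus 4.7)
\emph{Plan.} The target inequality is a frozen-coefficient estimate, so the natural strategy is to pass from $\sigma^\star(\gamma(s),\dot\gamma(s))$ to the constant-in-$s$ norm $\sigma^\star(x,\dot\gamma(s))$ along an almost-minimizing horizontal curve from $x$ to $x\cdot\delta_t(\xi,\eta)$, and then to apply Jensen's inequality on the first layer. The first step is to upgrade the pointwise lower semicontinuity of $\sigma^\star(\cdot,v)$ from Lemma \ref{lm:subFinul}(i) to a statement uniform in $v$: since $Z(y)\subseteq\hat B_\alpha(0)$, the support function $\sigma^\star(y,\cdot)$ is $\alpha$-Lipschitz in $v$ uniformly in $y$, so by positive $1$-homogeneity in $v$ together with a finite covering of the Euclidean unit sphere of $\rr^m$ one obtains that for every $\varepsilon>0$ there exists $\delta>0$ such that
\begin{equation*}
    \sigma^\star(y,v)\ge \sigma^\star(x,v)-\varepsilon|v|
\end{equation*}
for every $y\in B_\delta(x,d_\Om)$ and every $v\in\rr^m$.

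Fix $\varepsilon>0$ and the corresponding $\delta$. By left-invariance of $d_\G$ and by its $\delta_t$-homogeneity, $d_\G(x,x\cdot\delta_t(\xi,\eta))=t\,d_\G(0,(\xi,\eta))$; Proposition \ref{loceq} and Lemma \ref{prop:optimalpath}(ii) then give $d_{\sigma^\star}(x,x\cdot\delta_t(\xi,\eta))=O(t)$. For each small $t>0$ I select a sub-unit horizontal curve $\gamma_t:[0,T_t]\longrightarrow\Om$ with $\gamma_t(0)=x$, $\gamma_t(T_t)=x\cdot\delta_t(\xi,\eta)$ and
\begin{equation*}
    \int_0^{T_t}\sigma^\star(\gamma_t(s),\dot\gamma_t(s))\,ds\le d_{\sigma^\star}(x,x\cdot\delta_t(\xi,\eta))+\varepsilon t.
\end{equation*}
By Lemma \ref{prop:optimalpath}(ii) the $d_\Om$-length $L_\Om(\gamma_t)$ is also $O(t)$, so for $t$ small $\gamma_t$ is contained in $B_\delta(x,d_\Om)$, where the uniform lower bound of the first step applies.

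The decisive computation is the first-layer identity. Formula \eqref{prodincoord} gives $\pi(x\cdot\delta_t(\xi,\eta))-\pi(x)=t\xi$, while the exponential coordinates force $X_i|_y\cdot e_j=\delta_{ij}$ for $i,j\le m$, so under the convention $\dot\gamma_t(s)\in\rr^m$ we have $\tfrac{d}{ds}\pi(\gamma_t(s))=\dot\gamma_t(s)$ and hence $\int_0^{T_t}\dot\gamma_t(s)\,ds=t\xi$. Jensen's inequality applied to the convex, positively $1$-homogeneous function $\sigma^\star(x,\cdot)$ yields
\begin{equation*}
    \int_0^{T_t}\sigma^\star(x,\dot\gamma_t(s))\,ds\ge T_t\,\sigma^\star\!\left(x,\tfrac{t\xi}{T_t}\right)=t\,\sigma^\star(x,\xi),
\end{equation*}
and combining with the uniform lower semicontinuity and with $L_\Om(\gamma_t)=O(t)$ one obtains $\int_0^{T_t}\sigma^\star(\gamma_t,\dot\gamma_t)\,ds\ge t\,\sigma^\star(x,\xi)-C\varepsilon t$. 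Dividing by $t$, passing to $\liminf_{t\to 0^+}$, and sending $\varepsilon\to 0^+$ gives the claim. I expect the main obstacle to be the upgrade from pointwise to uniform lower semicontinuity in the first step: Lemma \ref{lm:subFinul}(i) provides only the former, so the Lipschitz-in-$v$ bound from $(\text{H}_3)$ and the compactness of the unit sphere must be combined with some care to produce a single $\delta$ which works simultaneously for all $v\in\rr^m$.
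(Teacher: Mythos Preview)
Your proposal is correct and follows essentially the same route as the paper: upgrade the pointwise lower semicontinuity of $\sigma^\star(\cdot,v)$ to a version uniform on the unit sphere via the $\alpha$-Lipschitz bound in $v$ and compactness, localize almost-minimizing sub-unit curves near $x$, compute the first-layer increment $\int\dot\gamma=t\xi$ from \eqref{prodincoord}, and apply Jensen together with the $1$-homogeneity of $\sigma^\star(x,\cdot)$. The only cosmetic differences are that the paper works along a sequence $t_h\to 0^+$ realizing the $\liminf$ and invokes Lemma~\ref{distaloc} for the localization, whereas you argue for all small $t$ and control the curve directly through the length bound $L_\Om(\gamma_t)\le\alpha\int\sigma^\star(\gamma_t,\dot\gamma_t)=O(t)$ coming from \eqref{equistima}; both are equivalent.
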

    \begin{proof}
        Let us fix $x\in\Om$, $\xi\in\rr^m$ and $\eta\in\rr^{n-m}$. Since $H$ is upper semicontinuous on $H \Om$, then $\sigma^\star(\cdot,\xi)$ is lower semicontinuous on $\Om$ by Lemma \ref{lm:subFinul}. This is equivalent to say that, for any $\varepsilon>0$ and for any $\tilde\xi\in \mathbb{S}^{m-1}$, there exists $r=r(x,\epsilon,\tilde \xi)$ such that $\sigma^\star(y,\tilde\xi)\geq\sigma^\star(x,\tilde\xi)-\varepsilon$ for any $y\in B_r(x,d_\Om)$. Recalling that $\sigma^\star$ is Lipschitz in the second entry and exploiting a standard compactness argument  (cf. the proof of \cite[Proposition 2.9]{monge}),  we infer that for any $\varepsilon>0$ there exists $r=r(x,\varepsilon)>0$ such that 
        \begin{equation}\label{lscnf}
            \sigma^\star(y,\tilde\xi)\geq\sigma^\star(x,\tilde \xi)-\varepsilon
        \end{equation}
        for any $y\in B_r(x,d_\Om)$ and any $\tilde \xi\in \mathbb{S}^{m-1}$. Let us choose a sequence of sub-unit curves $\ga_h:[0,t_h]\longrightarrow\Om$ in such a way that $\gamma_h(0)=x$, $\gamma_h(t_h)=x\cdot\delta_{t_h}(\xi,\eta)$ and
        \begin{equation*}
            \liminf_{t\to 0^+}\frac{d_{\sigma^\star}(x,x\cdot\delta_t(\xi,\eta))}{t}=\liminf_{h\to\infty}\ave_0^{t_h}\sigma^\star(\ga_h(t),\Dot\ga_h(t))\,dt.
        \end{equation*}
        Since $\lim_{t\to 0^+}x\cdot\delta_t(\xi,\eta)=x$, and in view of Lemma \ref{distaloc}, the sequence of curves can be choosen in such a way that $\gamma_h([0,t_h])\subseteq B_r(x,d_\Om)$ for any $h\in\mathbb N$. Therefore, exploiting \eqref{lscnf}, we infer that
        \begin{equation*}
            \liminf_{h\to\infty}\ave_0^{t_h}\sigma^\star(\ga_h(t),\Dot\ga_h(t))\,dt\geq \liminf_{h\to\infty}\ave_0^{t_h}\sigma^\star(x,\Dot\ga_h(t))\,dt-\varepsilon.
        \end{equation*}
For any $h\in\mathbb N$, set $\gamma_h=(\gamma_h^1,\ldots,\gamma_h^m,\gamma_h^{m+1},\ldots,\gamma_h^n)$. We recall that in the previous equations $\Dot\ga_h$ is the $m$-tuple of the components of $\Dot\ga_h$ along the generating vector fields. In other words, we mean $\Dot\ga_h(t)=(a_h^1(t),\ldots,a_h^m(t))$, where $\Dot\ga_h(t)=\sum_{j=1}^ma_h^j(t)X_j(\ga_h(t))$. It is then easy to see that $\Dot\ga_h^j=a_h^j$ for any $j=1,\ldots, m$. Therefore, 
 by \eqref{prodincoord}, \eqref{trasledil} and the fundamental theorem of calculus for absolutely continuous functions, we infer that
\begin{equation}\label{tfcexplanation}
\ave_0^{t_h}\Dot\ga_h(t)\,dt=\frac{\pi(\gamma_h(t_h))-\pi(\gamma(0))}{t_h}=\xi
\end{equation}
for any $h\in\mathbb N$, where $\pi$ is the projection map defined in \eqref{pelaproiezincoord}. Combining \eqref{tfcexplanation} with
the convexity properties of $\sigma^\star$ and Jensen's inequality, we get that 
\begin{equation*}
\liminf_{h\to\infty}\ave_0^{t_h}\sigma^\star(x,\Dot\ga_h(t))\,dt-\varepsilon\geq \liminf_{h\to\infty}\sigma^\star\left(x,\ave_0^{t_h}\Dot\ga_h(t)\,dt\right)-\varepsilon=\sigma^\star(x,\xi)-\varepsilon.
\end{equation*}
The thesis follows letting $\varepsilon$ go to $0$.
    \end{proof}
\section{Viscosity solutions for continuous Hamilton-Jacobi equations}\label{viscsect}
When the Hamiltonian $H$ is continuous, the study of \eqref{eq:HJ} can be carried out in the setting of sub-Riemannian viscosity solutions. To introduce this notion, we recall that 
the \emph{first order superjet} of $u\in C(\Om)$ at a point $x_0\in\Om$ is defined by
\begin{equation*}
    \partial_X^{+} u(x_0)=\{v\in\rr^m\,:\,u(x) \leq u(x_0) + \langle v, \pi(x_0^{-1} \cdot x) \rangle + o(d_{\Om}( x_0,x))\},
\end{equation*}
while the \emph{first order subjet} of $u$ at $x_0$ is defined by
\begin{equation*}
    \partial_X^{-} u(x_0)=\{v\in\rr^m\,:\,u(x) \geq u(x_0) + \langle v, \pi(x_0^{-1} \cdot x) \rangle + o(d_\Om( x_0,x))\}.
\end{equation*}
It is easy to see that $\partial_X^{+} u(x_0)$ and $\partial_X^{-} u(x_0)$ are closed and convex, and that they may be empty in general. Moreover, in view of Proposition \ref{loceq}, in the previous definition $d_\Om$ can be equivalently replaced by $\d$ or $d_{\mathbb G}$.
In the Euclidean setting (cf. \cite{users}) the notion of viscosity solution can be equivalently given exploiting either jets or suitable test functions (cf. \cite{CL,CraEvaLi}). Following this path (cf. \cite{StrofMan,Strof})
we say that a function $u \in C(\Om)$ is a 
\emph{jet subsolution} to \eqref{eq:HJ} in $\Om$ if 
\[
H(x_0, v) \leq 0 \]
for every $x_0 \in \Om$ and every $ v \in \partial_X^+ u(x_0)$. Similarly, $u$ is a \emph{jet supersolution}
to \eqref{eq:HJ} in $\Om$ if
\[
H(x_0, v) \geq 0\]
for every $x_0 \in \Om$ and every $ v \in \partial_X^- u(x_0)$. Finally, $u$ is a \emph{jet solution} to \eqref{eq:HJ} if it is both a jet subsolution and a jet supersolution. On the other hand, we say that $u$ is a \emph{viscosity subsolution} to \eqref{eq:HJ}
if $$ H(x_0, X \psi(x_0)) \leq 0$$ for any $x_0 \in \Omega$ and for any $\psi \in C_X^1(\Omega)$ such that
\[
u(x_0) - \psi(x_0) \geq u(x) - \psi(x)
\]
for any $x$ in a neighborhood of $x_0$. We say that $u$ is a \emph{viscosity supersolution} to \eqref{eq:HJ} if $$ H(x_0, X \psi(x_0)) \geq 0$$ for any $x_0 \in \Omega$ and for any $\psi \in C_X^1(\Omega)$ such that
\[
u(x_0) - \psi(x_0) \leq u(x) - \psi(x),
\]
for any $x$ in a neighborhood of $x_0$. Again, we say that $u$ is a \emph{viscosity solution} to \eqref{eq:HJ} if it is both a viscosity subsolution and a viscosity supersolution. The next proposition shows that even in this sub-Riemannian setting these two definitions are equivalent.
The following proof is inspired by \cite{StrofMan}.

\begin{proposition}\label{jetvisco}
    Let $\Om\subseteq\G$ be open. Assume that $H$ is continuous. Then $u\in C(\Om)$ is a jet subsolution (resp. supersolution) to \eqref{eq:HJ} if and only if it is a viscosity subsolution (resp. supersolution) to \eqref{eq:HJ}.
\end{proposition}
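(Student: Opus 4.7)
\emph{Jet $\Rightarrow$ viscosity.} Assume $u$ is a jet subsolution and let $\psi \in C^1_X(\Om)$ be a test function such that $u - \psi$ attains a local maximum at $x_0$. Since $\psi$ is of class $C^1_X$, it is Pansu-differentiable at $x_0$ (a pointwise sharpening of Theorem \ref{Pansu}, valid everywhere when the horizontal gradient is continuous, and which also follows by integrating $X\psi$ along short sub-unit curves joining $x_0$ to $x$), so that
\begin{equation*}
\psi(x) = \psi(x_0) + \langle X\psi(x_0), \pi(x_0^{-1} \cdot x)\rangle + o(d_\Om(x_0, x))
\end{equation*}
as $x \to x_0$. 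Inserting this expansion into the local maximality inequality $u(x) - u(x_0) \le \psi(x) - \psi(x_0)$ shows that $X\psi(x_0) \in \partial_X^+ u(x_0)$, whence $H(x_0, X\psi(x_0)) \le 0$ by the jet subsolution property. The supersolution implication is symmetric.

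\medskip

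\emph{Viscosity $\Rightarrow$ jet.} Conversely, suppose $u$ is a viscosity subsolution and fix $v \in \partial_X^+ u(x_0)$. The plan is to exhibit $\psi \in C^1_X$ in a neighborhood of $x_0$ with $\psi(x_0) = u(x_0)$, $X\psi(x_0) = v$ and $\psi \ge u$ near $x_0$; the viscosity inequality applied to such a $\psi$ then immediately gives $H(x_0, v) \le 0$. Define the nondecreasing modulus
\begin{equation*}
\omega(r) := \sup\bigl\{(u(x) - u(x_0) - \langle v, \pi(x_0^{-1}\cdot x)\rangle)_+ \,:\, \d(x_0, x) \le r\bigr\},
\end{equation*}
which is finite for small $r$ and satisfies $\omega(r)/r \to 0$ as $r \to 0^+$, thanks to the definition of $\partial_X^+ u(x_0)$ combined with the local equivalence of $d_\Om$ and $\d$ provided by Proposition \ref{loceq}. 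A standard smoothing procedure produces a smooth nondecreasing $g : [0, \infty) \to [0, \infty)$ with $g(0) = g'(0) = 0$ and $g(r) \ge \omega(r)$ for all sufficiently small $r$. I then set
\begin{equation*}
\psi(x) := u(x_0) + \langle v, \pi(x_0^{-1} \cdot x)\rangle + g(\d(x_0, x)).
\end{equation*}

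\medskip

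\emph{Main obstacle: verifying $\psi \in C^1_X$ near $x_0$.} In exponential coordinates the linear term depends only on the first-layer variables $x^{(1)}$; since each $X_j$ acts as $\partial_{x_j}$ on such functions, this term is smooth with constant horizontal gradient $v$. For the corrector: away from $x_0$ the Koranyi gauge $\d$ is smooth and, being $\delta_\la$-homogeneous of degree one, has horizontal gradient of degree zero, hence bounded on a punctured neighborhood of $x_0$; together with $g'(0) = 0$, the chain rule then gives $X_j(g \circ \d(x_0, \cdot))(x) \to 0$ as $x \to x_0$. At $x_0$ itself, using that the integral curve of $X_j$ through $x_0$ is $t \mapsto x_0 \cdot (t e_j)$ and that $\d(x_0, x_0 \cdot (t e_j)) = |t|$ (by homogeneity of $\d$, with $\|e_j\|=1$), a direct computation yields $X_j(g \circ \d(x_0, \cdot))(x_0) = \lim_{t\to 0} g(|t|)/t = 0$, so the horizontal gradient of the corrector exists and is continuous at $x_0$ with value $0$. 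Consequently $\psi \in C^1_X$ near $x_0$ with $X\psi(x_0) = v$, while $\psi(x) - u(x) \ge g(\d(x_0,x)) - \omega(\d(x_0, x)) \ge 0$ by construction of $g$. This yields the claim for subsolutions; the supersolution case is obtained by an analogous construction with a test function lying below $u$ (replacing $g$ by $-g$ and reversing the inequalities).
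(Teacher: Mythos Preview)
Your proof is correct and follows essentially the same route as the paper's. For the jet $\Rightarrow$ viscosity direction the paper simply cites \cite[Proposition 3.2]{CGPV}, whereas you sketch the underlying reason (Pansu differentiability of $C^1_X$ functions via integration of $X\psi$ along sub-unit curves); for the viscosity $\Rightarrow$ jet direction both arguments build a $C^1_X$ test function of the form $u(x_0)+\langle v,\pi(x_0^{-1}\cdot x)\rangle+\text{(smoothed modulus)}\circ\d(x_0,\cdot)$, the only cosmetic difference being that the paper first passes to the ratio $\omega(r)/r$, majorizes it by a continuous function, and integrates (obtaining a $C^1$ corrector $G(2\d(x_0,\cdot))$), while you majorize $\omega$ directly by a smooth $g$ with $g'(0)=0$. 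Your explicit verification that $g\circ\d(x_0,\cdot)\in C^1_X$ at $x_0$ (using boundedness of $X\d$ near $x_0$ and the computation along the integral curve $t\mapsto x_0\cdot(te_j)$) is more detailed than the paper's, which merely asserts this regularity.
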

\begin{proof}
    We prove only the half of the claim concerning subsolutions, being the other half analogous. The fact that a jet subsolution is a viscosity subsolution follows from \cite[Proposition 3.2]{CGPV}. On the contrary, assume that $u$ is a viscosity subsolution to \eqref{eq:HJ}, let $x_0\in\Om$ and $p\in\partial_X^+u(x_0)$. Let $\d$ be as in \eqref{od}. It is well known that  $y\mapsto \d(x_0,y)$ is smooth outside $x_0$ and its horizontal gradient is bounded near $x_0$. Since $p\in\partial_X^+u(x_0)$, then
\begin{equation}\label{gaugesuperdiff}
        u(x) \leq u(x_0) + \langle p, \pi(x_0^{-1} \cdot x) \rangle + o(\d( x_0,x)).
    \end{equation}
Let $R>0$ be such that $B_R(x_0,\d)\Subset\Om$, and define $g:(0,R]\longrightarrow\mathbb R$ by
\begin{equation*}
    g(r) := \sup_{x\in B_r(x_0,\d)}\frac{\max \{ 0,u(x)-u(x_0)- \langle p,\pi(x_0^{-1} \cdot x) \rangle \}}{\d( x_0,x)}.
\end{equation*}
Then $g$ is nondecreasing and, by the choice of $p$, $\lim_{r\to 0}g(r)=0$, Hence there exists $\tilde g\in C([0,R])$ such that $\tilde g$ is nondecreasing, $\tilde g(0)=0$ and $\tilde g\geq g$. Let $G(r):=\int _0^r\tilde g(\tau)d\tau$. Then $G\in C^1([0,R[)$ and $G(0)=G'(0)=0$. Moreover, for any $0<r<\frac{R}{2}$, it holds that
\begin{equation}\label{stimavisco}
    G(2r)\geq \int _r^{2r}\tilde g(\tau)d\tau\geq r\tilde g(r)\geq rg(r).
\end{equation}
Let us define $\varphi(x)=u(x_0)+\langle p, \pi(x_0^{-1} \cdot x) \rangle+G(2\d(x,x_0))$. Then 
$\varphi\in C^1_X(B_\frac{R}{2}(x_0,\d))$, $u(x_0)=\varphi(x_0)$ and $X \varphi(x_0)=p$. Finally, notice that \eqref{stimavisco} and the definition of $g$ imply that $u(x)\leq\varphi(x)$ on $B_\frac{R}{2}(x_0,\d)$.
Therefore, being $u$ a viscosity subsolution, we conclude that
\begin{equation*}
    H(x_0,p)=H(x_0,X\varphi(x_0))\leq 0.
\end{equation*}
    \end{proof}
Moreover, when $H$ enjoys the following mild convexity properties in its second entry, a locally Lipschitz function is a viscosity subsolution if and only if it satisfies \eqref{eq:HJ} pointwise almost everywhere. 
\begin{proposition}\label{equivsol}
    Let $\Om$ be an open subset of $\mathbb G$. Assume that $H$ is continuous and that $Z(x)$ is convex for any $x\in\Om$. Let $u\in W^{1,\infty}_{X,\loc}(\Om)$. Then the following conditions are equivalent.
    \begin{itemize}
        \item [$(i)$] $u$ is a viscosity subsolution to \eqref{eq:HJ}.
        \item[$(ii)$] $u$ is a jet subsolution to \eqref{eq:HJ}.
        \item[$(iii)$] $H(x, X u(x))\leq 0$ for almost every $x\in\Om$.
    \end{itemize}
\end{proposition}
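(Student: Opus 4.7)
The plan is to use Proposition \ref{jetvisco} to reduce the equivalence (i)$\iff$(ii)$\iff$(iii) to the equivalence of (ii) and (iii), since (i)$\iff$(ii) is already granted under the continuity assumption on $H$. The implication (ii)$\Rightarrow$(iii) is immediate from Theorem \ref{Pansu}: at any Pansu differentiability point $x_0$, which form a subset of full measure in $\Om$, the defining limit shows that $Xu(x_0)\in \partial_X^+ u(x_0)$, and so (ii) yields $H(x_0, Xu(x_0))\leq 0$.

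The core of the proof is therefore the reverse implication (iii)$\Rightarrow$(ii). Fix $x_0\in\Om$ and $v\in\partial_X^+ u(x_0)$; the goal is to show $v\in Z(x_0)$. I would first establish two preparatory facts. The first, namely $\partial_{X,N}u(x)\subseteq Z(x)$ for every $x\in\Om$, comes from (iii) and the continuity of $H$: for any convergent sequence $Xu(y_n)\to q$ with $y_n\to x$ and $y_n\notin N$, the inclusion $Xu(y_n)\in Z(y_n)$ together with continuity of $H$ gives $q\in Z(x)$, and the closedness plus convexity of $Z(x)$ propagate this to the closed convex hull. The second is upper semicontinuity of the multifunction $x\mapsto Z(x)$: continuity of $H$ and compactness of $Z(x_0)$ (recall $Z(x_0)\subseteq\hat{B}_\alpha(0)$) force, for every $\eps>0$, the existence of $\delta>0$ such that $Z(y)\subseteq Z(x_0)+\hat{B}_\eps(0)$ whenever $y\in B_\delta(x_0, d_\Om)$.

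Assume by contradiction that $v\notin Z(x_0)$. Hahn--Banach then supplies $\xi\in\rr^m\setminus\{0\}$ and $c\in\rr$ with $\langle v,\xi\rangle<c\leq\langle\zeta,\xi\rangle$ for every $\zeta\in Z(x_0)$. I test along the horizontal line $\ga(t)=x_0\cdot\delta_t(\xi,0)$, the integral curve of $\sum_i\xi_i X_i$ starting at $x_0$, which for small $t$ lies in $\Om$, has $a(s)\equiv\xi$, and satisfies $\pi(x_0^{-1}\cdot\ga(t))=t\xi$ together with $d_\Om(x_0,\ga(t))\leq t|\xi|$. The superjet condition and Proposition \ref{w2} produce the two estimates
\begin{equation*}
    u(\ga(t))-u(x_0)\leq t\langle v,\xi\rangle+o(t)\quad\text{and}\quad u(\ga(t))-u(x_0)=\int_0^t\langle\vartheta(s),\xi\rangle\,ds,
\end{equation*}
where the selection $\vartheta(s)\in \partial_{X,N}u(\ga(s))\subseteq Z(\ga(s))$. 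The upper semicontinuity of $Z$ then implies $\langle\vartheta(s),\xi\rangle\geq c-\eps|\xi|$ for all sufficiently small $s$; dividing both bounds by $t$ and letting first $t\to 0^+$ and then $\eps\to 0$ yields $c\leq\langle v,\xi\rangle$, the required contradiction.

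The main obstacle is precisely that the superjet condition at $x_0$ is an infinitesimal linear bound along horizontal directions, while (iii) is a pointwise almost-everywhere condition; so the a.e.\ information must be propagated along a horizontal trajectory through $x_0$. This is exactly the role of Proposition \ref{w2}, which replaces the pointwise gradient by an $L^\infty$ selection constrained to $Z(\ga(s))$, while the upper semicontinuity of $x\mapsto Z(x)$ transfers this constraint to a neighbourhood of $Z(x_0)$, producing the uniform lower bound that matches the linear bound coming from the superjet.
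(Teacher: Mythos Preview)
Your proof is correct, but it takes a genuinely different route for the implication (iii)$\Rightarrow$(ii) than the paper. The paper disposes of this direction in one line by citing an external result, namely \cite[Theorem 3.7]{CGPV}, which directly gives (iii)$\Rightarrow$(i); combined with Proposition~\ref{jetvisco} this closes the cycle. By contrast, you give a self-contained argument built from tools already present in the paper: the generalised subgradient $\partial_{X,N}u$ and Proposition~\ref{w2}, upper semicontinuity of $x\mapsto Z(x)$ (which does use $(\text{H}_3)$, tacitly in force throughout the paper), Hahn--Banach separation, and testing along the horizontal one-parameter curve $t\mapsto x_0\cdot\delta_t(\xi,0)$. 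The trade-off is clear: the paper's proof is a one-line citation, while yours is longer but more transparent and avoids relying on a black box. Your argument is in fact close in spirit to how the paper later uses Proposition~\ref{w2} in the proof of Proposition~\ref{MonforLipv}, so it fits naturally with the surrounding machinery.
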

\begin{proof}
    The implication $(i)\iff(ii)$ follows from Proposition \ref{jetvisco}. Moreover, $(iii)\implies (i)$ follows from \cite[Theorem 3.7]{CGPV}. Finally, we prove $(ii)\implies (iii)$. Let $x\in\Om$ be such that $u$ is Pansu-differentiable at $x$. Then clearly $Xu(x)\in\partial_X^+u(x)$, and so $H(x,X u(x))\leq 0$.
\end{proof}
To conclude this section, we point out that the sub-Riemannian Hamilton--Jacobi equation \eqref{eq:HJ} can be viewed as an Euclidean equation in the following sense. Let $C(x)$ denote the $m\times n$ matrix whose rows correspond to the coefficients of the generating vector fields of $\mathfrak g_1$ at $x$. 
We define the auxiliary Hamiltonian $\Tilde{H}: \Omega \times \mathbb{R}^n \longrightarrow \mathbb{R}$ by
\begin{equation}\label{eqH2}
\Tilde{H}(x, v) = H(x, v\cdot C(x)^T) 
\end{equation}
for any $(x,v) \in \Omega \times \mathbb{R}^n$.
It is easy to see that $\Tilde{H}\in C(\Om\times\mathbb R^n)$ when $H$ is continuous. With the next result, we show that sub-Riemannian viscosity solutions to \eqref{eq:HJ} coincides with Euclidean viscosity solutions to the Hamilton--Jacobi equation associated to \eqref{eqH2}.

\begin{proposition}\label{euclivscarnot}
    Let $\Om$ be an open subset of $\mathbb G$. Let $\Tilde{H}$ be as in \eqref{eqH2}. Then $u\in C(\Om)$ is a viscosity subsolution (resp. supersolution) to
    \begin{equation}\label{subriem}
        H(x, X u)=0
    \end{equation}
    if and only if $u$ is a viscosity subsolution (resp. supersolution) to 
    \begin{equation}\label{euclid}
        \Tilde{H}(x,\nabla u)=0.
    \end{equation}
\end{proposition}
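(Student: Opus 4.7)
The plan is to prove each implication separately, focusing on the subsolution case since the supersolution one is analogous after reversing inequalities and extrema. The direction from sub-Riemannian viscosity subsolution to Euclidean viscosity subsolution of $\tilde H(x,\nabla u)=0$ is straightforward. Indeed, every $\varphi \in C^1(\Om)$ (in the Euclidean sense) belongs to $C^1_X(\Om)$ because the coefficients of the horizontal vector fields are smooth, and moreover $X\varphi(x) = \nabla\varphi(x) \cdot C(x)^T$ pointwise. If $u - \varphi$ attains a local maximum at $x_0$, the sub-Riemannian subsolution hypothesis applied with test function $\varphi$ gives $H(x_0, X\varphi(x_0)) \le 0$, which reads $\tilde H(x_0, \nabla\varphi(x_0)) \le 0$ by the very definition of $\tilde H$.

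The reverse direction is the more delicate one, since $C^1_X(\Om)$ strictly contains $C^1(\Om)$ and a $C^1_X$ function need not possess continuous Euclidean partial derivatives. Given $\psi \in C^1_X(\Om)$ realizing a local maximum of $u-\psi$ at $x_0$, I would first replace $\psi$ by $\psi(x) + \delta|x - x_0|^2$ to make the maximum strict, noting that this perturbation leaves $X\psi(x_0)$ unchanged because the Euclidean gradient of $|x-x_0|^2$ vanishes at $x_0$. I would then approximate by standard Euclidean mollification $\psi_\epsilon := \psi * \eta_\epsilon \in C^\infty$. Strict local maxima being stable under uniform convergence, there exist $x_\epsilon \to x_0$ realizing local maxima of $u - \psi_\epsilon$. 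Applying the Euclidean viscosity subsolution property at each $x_\epsilon$ gives $\tilde H(x_\epsilon, \nabla\psi_\epsilon(x_\epsilon)) = H(x_\epsilon, X\psi_\epsilon(x_\epsilon)) \le 0$, and passing to the limit via continuity of $H$ yields $H(x_0, X\psi(x_0)) \le 0$, as required.

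The main obstacle is the claim that $X\psi_\epsilon \to X\psi$ locally uniformly; this is needed to close the limit argument but is not an immediate consequence of $\psi_\epsilon \to \psi$ uniformly, because a priori the distributional Euclidean derivatives of $\psi$ need not even be functions. My plan for it is to start from the integration-by-parts identity
\[
X_j\psi_\epsilon(x) - (X_j\psi)*\eta_\epsilon(x) = \sum_i \int \psi(y)\bigl(c_{ji}(x)-c_{ji}(y)\bigr)\partial_i\eta_\epsilon(x-y)\,dy,
\]
where the absence of an additional $\sum_i \partial_i c_{ji}$ term uses the divergence-free character of horizontal left-invariant vector fields on a Carnot group. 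Rescaling $y = x - \epsilon z$, Taylor-expanding $c_{ji}$ around $x$, and using the elementary identity $\int z_k\, \partial_i\eta(z)\,dz = -\delta_{ik}$ reduces the leading term to a multiple of $\psi(x)\sum_i \partial_i c_{ji}(x)$, which vanishes once more by divergence-freeness; hence the remainder is $o(1)$ uniformly on compacts. Combining this with the standard convergence $(X_j\psi)*\eta_\epsilon \to X_j\psi$, valid since $X_j\psi$ is continuous, delivers the required uniform convergence and completes the argument.
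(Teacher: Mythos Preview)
Your argument is correct and follows the same overall strategy as the paper: the forward direction is immediate from $C^1(\Om)\subseteq C^1_X(\Om)$, and for the converse you approximate the $C^1_X$ test function by smooth ones, track the maxima, and pass to the limit using continuity of $H$. The only real difference is in how the approximation step is handled. The paper simply invokes an external result (\cite[Proposition~2.4]{CGPV}, cf.\ also \cite[Proposition~1.20]{FS}) guaranteeing that any $\psi\in C^1_X$ can be approximated by smooth functions in the $C^1_X$ topology, and then argues by contradiction that the maxima $x_h$ converge to $x_0$. You instead build the approximation by hand via Euclidean mollification and justify $X\psi_\epsilon\to X\psi$ locally uniformly through a Friedrichs-type commutator identity, exploiting that left-invariant horizontal fields on a Carnot group are divergence-free; this is exactly the content of the cited results, so your route is a self-contained reproof of the same lemma. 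Your trick of adding $\delta|x-x_0|^2$ to force a strict maximum is a standard alternative to the paper's contradiction argument for $x_h\to x_0$; either works.
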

\begin{proof}
Since $C^1(\Om)\subseteq C^1_X(\Om)$, then a viscosity solution to \eqref{subriem} is a viscosity solution to \eqref{euclid}. To prove the converse implication we only show that viscosity subsolutions to \eqref{euclid} are viscosity subsolutions to \eqref{subriem}, being the other part of the proof analogous. Therefore, assume that $u$ is a viscosity subsolution to \eqref{euclid}, let $x_0\in\Om$ and let $\varphi\in C^1_X(\Om)$ be such that $u(x_0)=\varphi(x_0)$ and $\varphi(x)>u(x)$ for any $x\in B_{2r}(x_0,d_\Om)$, for some $r>0$ small enough to ensure that $B_{2r}(x_0,d_\Om )\Subset\Om$. Thanks to \cite[Proposition 2.4]{CGPV} (cf. also \cite[Proposition 1.20]{FS}), there exists a sequence $(\varphi_h)_h\subseteq C^\infty(\Om)$ converging to $\varphi$ in $C^1_X(B_{2r}(x_0,d_\Om))$. For any $h\in\mathbb N$, let $x_h$ be a maximum point for $u-\varphi_h$ in $\overline{B_r(x_0,d_\Om)}$. We claim that $x_h\to x_0$ as $h\to +\infty$. Otherwise, we can assume that, up to a subsequence, $x_h\to x_1$ for some $x_1\neq x_0$ such that $x_1\in \overline{B_r(x_0,d_\Om)}$. Recalling that $u(x_h)-\varphi_h(x_h)\geq u(x_0)-\varphi_h(x_0)$ for any $h\in\mathbb N$, and since $x_h\to x_1$ and $\varphi_h\to\varphi$ uniformly on $B_{2r}(x_0,d_\Om)$, we pass to the limit and we infer that $u(x_1)-\varphi(x_1)\geq u(x_0)-\varphi(x_0)=0.$ Therefore $\varphi(x_1)\leq u(x_1)$, a contradiction. By our choice of $x_h$, and thanks to \eqref{euclid}, we get that
\begin{equation*}
    H(x_h,X\varphi_h(x_h))=\Tilde{H}(x_h,\nabla \varphi_h(x_h))\leq 0.
\end{equation*}
Therefore, since $H$ is continuous, $x_h\to x$ and $X \varphi_h\to X \varphi$ uniformly on $B_{2r}(x_0,d_\Om)$, passing to the limit in the previous inequality we conclude that 
\begin{equation*}
    H(x_0, X \varphi(x_0))\leq 0.
\end{equation*}
Hence $u$ is a viscosity subsolution to \eqref{subriem}.
\end{proof}
\section{A Sub-Riemannian Hopf--Lax Formula for the Dirichlet Problem}\label{hlsection}
As already mentioned, the properties of Monge subsolutions and supersolutions strictly depend
on those enjoyed by the optical length function $\ol$. Moreover, as it happens in the viscosity setting, $d_\Om$ can be equivalently replaced by $\d$ or $d_{\mathbb G}$.
Now we explain how to replace $\ol$ with suitable extensions as already explained in the introduction.  According to the latter, we recall the family defined in \eqref{kzero}, that is
\begin{equation*}
    \mathcal K(H,\Om):=\{K:\rr^n\times\rr^m\longrightarrow \rr\,:\,\text{$K$ satisfies {\rm (H)} and $K\equiv H$ on $\Om\times\rr^m$}\}.
\end{equation*}
As already mentioned, $\mathcal K(H,\Om)$ is always non-empty. For instance, it is always the case that 
 \begin{equation}\label{numeraperdopo}
        K(x,\xi):=
        \begin{cases}
            H(x,\xi)\qquad\text{if }(x,\xi)\in\Om\times\rr^m\\
            |\xi|-\alpha\qquad\text{otherwise}
        \end{cases}
    \end{equation}
    belongs to  $\mathcal K(H,\Om)$.
 For a fixed $K\in\mathcal K(H,\Om)$, we can consider the associated $\sk$ and $\olk$. We want to show that the notion of Monge solution is independent of the choice of $K\in\mathcal K(H,\Om)$. To this aim, we prove the following preliminary result.
\begin{lemma}\label{equivsigma}
    For any $K\in\mathcal K(H,\Om)$, for any $x_0\in\Om$ and for any $R>0$ such that $B_R(x_0,d_\Om)\subseteq \Om$ there exists $r>0$ and $\bar\varepsilon>0$ such that, for any $x\in B_r(x_0,d_\Om)$ and for any $0<\varepsilon<\bar\varepsilon$, any curve $\gamma:[0,T]\longrightarrow\rr^n$ such that $\gamma$ is sub-unit, $\gamma(0)=x_0$, $\ga(T)=x$ and
    \begin{equation*}
        \olk(x_0,x)\geq  \int_0^T \sigma_K^\star(\ga(t),\dot{\ga}(t)) \ dt-\varepsilon
    \end{equation*}
    lies in $B_R(x_0,d_\Om)$.
\end{lemma}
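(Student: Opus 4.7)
The plan is to choose $r$ and $\bar\varepsilon$ small enough that the total cost budget $\int_0^T \sk(\gamma,\dot\gamma)\,dt$ of any curve satisfying the near-optimality hypothesis is strictly smaller than the minimum cost any sub-unit curve must pay in order to reach $\partial B_R(x_0,d_\Om)$. The crucial feature making this work is the two-sided comparison of $\sk$ with the Riemannian norm on $H\G$ given by \eqref{equistima}.

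As a preliminary step, I would invoke Proposition \ref{loceq} to pick a smaller radius $0 < R' < R$ such that the Euclidean closure of $B_{R'}(x_0, d_\Om)$ is still contained in $B_R(x_0, d_\Om)$, and hence in $\Om$. This shrinking is essential because the hypothesis only provides $B_R(x_0, d_\Om) \subseteq \Om$, with no direct control on whether the $d_\Om$-closure of this ball is compact in $\Om$. The local equivalence of $d_\Om$ with the ambient Euclidean topology at $x_0$ granted by Proposition \ref{loceq} is precisely what allows such a reduction: for $R'$ small enough, the $d_\Om$-closed ball of radius $R'$ coincides with the $\rr^n$-closure of $B_{R'}(x_0, d_\Om)$ and is contained in $B_R(x_0, d_\Om)$.

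With $R'$ fixed, I would argue by contradiction. Suppose some admissible $\gamma$ exits $B_{R'}(x_0, d_\Om)$, and let $t^*$ be the first exit time. By continuity of $\gamma$ in $\rr^n$ and the choice of $R'$, one has $\gamma(t^*) \in \overline{B_{R'}(x_0, d_\Om)} \subseteq \Om$, and the $d_\Om$-openness of $B_{R'}(x_0, d_\Om)$ forces $d_\Om(x_0, \gamma(t^*)) \geq R'$. Since $\gamma|_{[0,t^*]}$ is a sub-unit curve lying entirely in $\Om$, the left inequality in \eqref{equistima} together with the nonnegativity of $\sk$ (which follows from $0 \in Z(x)$, guaranteed by $(\text{H}_3)$) yields
\[
\int_0^T \sk(\gamma, \dot\gamma)\,dt \,\geq\, \int_0^{t^*} \sk(\gamma, \dot\gamma)\,dt \,\geq\, \frac{1}{\alpha}\int_0^{t^*} |\dot\gamma(t)|_{\gamma(t)}\,dt \,\geq\, \frac{1}{\alpha}\, d_\Om(x_0, \gamma(t^*)) \,\geq\, \frac{R'}{\alpha}.
\]

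On the other hand, the near-optimality hypothesis combined with Lemma \ref{prop:optimalpath}$(ii)$ applied to $K$ on the whole $\rr^n$, together with the trivial inequality $d_{\mathbb G} \leq d_\Om$ on $\Om$, provides
\[
\int_0^T \sk(\gamma, \dot\gamma)\,dt \,\leq\, \olk(x_0, x) + \varepsilon \,\leq\, \alpha\, d_{\mathbb G}(x_0, x) + \varepsilon \,\leq\, \alpha\, d_\Om(x_0, x) + \varepsilon \,<\, \alpha r + \bar\varepsilon.
\]
It then suffices to choose, for instance, $r := R'/(2\alpha^2)$ and $\bar\varepsilon := R'/(2\alpha)$, making the right-hand side strictly smaller than $R'/\alpha$ and contradicting the previous display. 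The main obstacle is precisely the preliminary topological reduction to $R'$: without it, the first-exit point of $\gamma$ could fail to lie in $\Om$, and the crucial cost estimate via \eqref{equistima} on $\gamma|_{[0,t^*]}$ would be unavailable.
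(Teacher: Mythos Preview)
Your argument is correct and rests on the same mechanism as the paper's: a near-optimal curve between $x_0$ and a nearby point $x$ has total $\sigma_K^\star$-cost controlled by $\alpha d_\Om(x_0,x)+\varepsilon$, while any sub-unit curve that reaches the boundary of a large $d_\Om$-ball must pay cost at least $R'/\alpha$ by \eqref{equistima}; choosing $r$ and $\bar\varepsilon$ small forces a contradiction.

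Where you differ from the paper is in execution. The paper runs a sequential contradiction: it assumes sequences $x_h\to x_0$ and $\varepsilon$-optimal curves $\gamma_h$ that escape $B_R(x_0,d_\Om)$, then shows $\olk(x_0,x_h)\to 0$ while at the same time $\olk(x_0,x_h)\geq \olk(x_0,z_h)+\olk(z_h,x_h)-\tfrac1h\geq \tfrac1\alpha d_\G(x_0,z_h)-\tfrac1h\geq \tfrac{C}{\alpha}d_\Om(x_0,z_h)-\tfrac1h\geq \tfrac{CR}{2\alpha}$, using Proposition~\ref{loceq} for the constant $C$. Your approach is instead direct and quantitative: you produce explicit choices $r=R'/(2\alpha^2)$ and $\bar\varepsilon=R'/(2\alpha)$ without passing to sequences, and you bound the length of the initial segment $\gamma|_{[0,t^*]}$ by $d_\Om$ rather than splitting via $\olk$.

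Your preliminary reduction to a radius $R'$ with $\overline{B_{R'}(x_0,d_\Om)}\subseteq B_R(x_0,d_\Om)$ is a genuine improvement in rigor. The paper takes the exit point $z_h\in\partial B_R(x_0,d_\Om)$ and immediately evaluates $d_\Om(x_0,z_h)$, tacitly assuming $z_h\in\Om$; under the bare hypothesis $B_R(x_0,d_\Om)\subseteq\Om$ this is not automatic, since the curve $\gamma_h$ ranges over $\rr^n$. Your shrinking step closes this gap cleanly and keeps the entire segment $\gamma|_{[0,t^*]}$ inside $\Om$, so that both the exit-point estimate $d_\Om(x_0,\gamma(t^*))\geq R'$ and the length bound $\int_0^{t^*}|\dot\gamma|\,dt\geq d_\Om(x_0,\gamma(t^*))$ are justified.
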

\begin{proof}
    Assume by contradiction that there exists $K:\rr^n\times\rr^m\longrightarrow\rr$ such that $K\in\mathcal K(H,\Om)$, $x_0\in\Om$, $R>0$ with $B_R(x_0)\subseteq\Om$ and sequences $(x_h)_h$ and $(\gamma_h)_h\subseteq\mathcal H(\rr^n)$, with $\ga_h:[0,T_h]\longrightarrow \rr^n$, $\ga_h(0)=x_0$, $\ga_h(T_h)=x_h$ and
    \begin{equation*}
        \olk(x_0,x_h)\geq \int_0^{T_h} \sigma_K^\star(\ga_h(t),\dot{\ga_h}(t)) \ dt-\frac{1}{h}    
    \end{equation*}
    such that $x_h\to x_0$ and for any $h$ there exists $0<t_h<T_h$ such that $z_h:=\ga(t_h)\in\partial B_R(x_0,d_\Om)$. Since $K$ satisfies {\rm (H)} on $\rr^n$, it follows that
    \begin{equation*}
        \olk(x_0,x_h)\leq \alpha d_{\G}(x_0,x_h)\leq \alpha d_\Om(x_0,x_h)\to 0
    \end{equation*}
    as $h\to\infty$. On the other hand, in view of the choice of $\gamma_h$ and Proposition \ref{loceq}, there exists $C>0$ such that
    \begin{equation*}
    \begin{split}
\olk(x_0,x_h)&\geq \olk(x_0,z_h)+\olk(z_h,x_h)-\frac{1}{h}\\
&\geq\frac{1}{\alpha}d_{\mathbb G}(x_0,z_h)-\frac{1}{h}\\
&\geq\frac{C}{\alpha}d_{\Om}(x_0,z_h)-\frac{1}{h}\\
&\geq\frac{CR}{2\alpha}
    \end{split}
    \end{equation*}
for any $h$ big enough to ensure that $h\geq \frac{2\alpha }{CR}$.  A contradiction then follows.
\end{proof}
\begin{proposition}\label{mongequivdef}
    Let $K:\rr^n\times\rr^m\longrightarrow\rr$ be such that $K\in\mathcal K(H,\Om)$. A function $u \in C(\Om)$ is a \emph{Monge solution} (resp. \emph{subsolution}, \emph{supersolution}) to 
\eqref{eq:HJ}
in $\Omega$ if and only if 
\begin{equation}
\liminf_{x \to x_0} \frac{u(x) - u(x_0) + \olk(x_0, x)}{d_{ \Om}(x_0,x)}
= 0 \quad (\text{resp. } \geq, \leq)
\end{equation}
for any $x_0\in\Om$.
\end{proposition}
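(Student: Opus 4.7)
The plan is to show that $\ol$ and $\olk$ agree on a $d_\Om$-neighborhood of every $x_0\in\Om$, from which the two versions of the Monge condition at $x_0$ automatically coincide, since only the numerator changes between them and the denominator $d_\Om(x_0,x)$ is the same. The whole content of the statement is therefore reduced to a local identification of the two optical length functions.

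One inequality comes for free. Since $K\equiv H$ on $\Om\times\rr^m$ we have $\sk=\sigma^\star$ on $H\Om$, and every sub-unit horizontal curve $\gamma\in\mathcal H(\Om)$ joining $x_0$ to $x\in\Om$ is admissible in the definition of $\olk(x_0,x)$ with the very same value of the energy integral. Taking the infimum over such curves gives
\[
\olk(x_0,x)\leq\ol(x_0,x)\qquad\text{for every } x_0,x\in\Om.
\]

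For the reverse inequality near $x_0$, the plan is to invoke Lemma~\ref{equivsigma} as a containment mechanism. Fix $R>0$ with $B_R(x_0,d_\Om)\Subset\Om$, and let $r,\bar\varepsilon>0$ be as provided by that lemma. Given $x\in B_r(x_0,d_\Om)$ and $0<\varepsilon<\bar\varepsilon$, pick a sub-unit horizontal curve $\gamma:[0,T]\to\rr^n$ with $\gamma(0)=x_0$, $\gamma(T)=x$ and
\[
\int_0^T \sk(\gamma(t),\dot\gamma(t))\,dt\leq \olk(x_0,x)+\varepsilon.
\]
Lemma~\ref{equivsigma} forces $\gamma$ to stay inside $B_R(x_0,d_\Om)\subseteq\Om$, so $\gamma\in\mathcal H(\Om)$ is admissible for $\ol(x_0,x)$ and the integrand agrees with $\sigma^\star$ along $\gamma$. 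Hence $\ol(x_0,x)\leq\olk(x_0,x)+\varepsilon$, and letting $\varepsilon\to 0^+$ we get $\ol(x_0,x)\leq\olk(x_0,x)$. Combined with the previous step this yields $\ol(x_0,x)=\olk(x_0,x)$ for every $x\in B_r(x_0,d_\Om)$.

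To conclude, since $\ol(x_0,x)=\olk(x_0,x)$ on a $d_\Om$-neighborhood of $x_0$ and the denominators coincide, the two quotients
\[
\frac{u(x)-u(x_0)+\ol(x_0,x)}{d_\Om(x_0,x)}\quad\text{and}\quad\frac{u(x)-u(x_0)+\olk(x_0,x)}{d_\Om(x_0,x)}
\]
are equal for all $x$ sufficiently close to $x_0$, so their $\liminf$ as $x\to x_0$ are equal at every $x_0\in\Om$. This gives at once the equivalence of the Monge solution, subsolution and supersolution conditions formulated through $\sigma^\star$ and through $\sk$. I do not expect any substantive obstacle in this argument: the only delicate point is the non-escape of $\varepsilon$-optimal curves for $\olk$ from $\Om$, which is exactly the content of Lemma~\ref{equivsigma}, while the rest of the proof is purely bookkeeping.
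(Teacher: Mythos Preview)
Your proof is correct and follows exactly the same approach as the paper: both reduce the statement to the local identity $\ol(x_0,x)=\olk(x_0,x)$ for $x$ near $x_0$, obtained via Lemma~\ref{equivsigma}. The paper states this identity in one line, while you have (correctly) spelled out the two inequalities and the concluding step.
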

\begin{proof}
    It suffices to observe that, thanks to Lemma \ref{equivsigma} and the definition of $\mathcal K(H,\Om)$, for any $x_0\in\Om$ there exists $r>0$ such that
    \begin{equation*}
        \ol(x_0,x)=\olk(x_0,x)
    \end{equation*}
    for any $x\in B_r(x_0,d_\Om)$ and for any $K\in \mathcal K(H,\Om)$.
\end{proof}

Thanks to the results of Section \ref{olprop}, we are in position to prove Theorem \ref{mongedir}. The proof of this result is inspired by \cite{monge}.
\begin{proof}[Proof of Theorem \ref{mongedir}]
Let $K:\rr^n \times\mathbb{R}^m\longrightarrow\rr$ be as in the statement. First, notice that, since $g$ is bounded, then $w$ is well-defined. Fix $x,z\in\Om$ and, for any $h\in\mathbb N_+,$ let $y_h\in\partial\Om$ be such that $w(z)\geq\olk(z,y_h)+g(y_h)-\frac{1}{h}$. Then
\begin{equation*}
    w(x)-w(z)\leq \olk(x,y_h)-\olk(z,y_h)+\frac{1}{h}\leq \olk(x,z)+\frac{1}{h}\leq\ol(x,z)+\frac{1}{h}\leq\alpha d_\Om(x,z)+\frac{1}{h}.
\end{equation*}
Letting $h\to\infty$, and since $w(z)-w(x)$ can be estimated similarly, we conclude that $w\in \lip (\Om,d_\Om)$.
 Fix $x\in\partial\Om$. Then, by definition of $w$, it follows that $w(x)\leq g(x)$. On the other hand, if $y\in\partial\Om$, \eqref{compcond} implies that
    \begin{equation*}
        \olk(x,y)+g(y)\geq g(x),
    \end{equation*}
and so, taking the infimum over $\partial \Om$, we conclude that $w(x)\geq g(x)$. Therefore $w= g$ on $\partial\Om$.
Let now $x\in\partial\Om$ and let $(x_h)_h\subseteq\Om$ be such that $x_h\to x$ as $h\to\infty$. Then
\begin{equation*}
    w(x_h)-w(x)\leq \olk(x_h,x)+g(x)-g(x)\leq\alpha d_{ \G}(x_h,x)
\end{equation*}
and there exists $(y_h)_h \subseteq \partial \Om$ such that
\begin{equation*}
    w(x)-w(x_h)\leq g(x)-g(y_h)-\olk(x_h,y_h)+\frac{1}{h}\leq \olk(x,y_h)-\olk(x_h,y_h)+\frac{1}{h}\leq\olk(x,x_h)+\frac{1}{h}.
\end{equation*}
Hence we conclude that $w\in C(\overline\Om)$.
Let us show that $w$ is a Monge subsolution. To this aim, let $x_0\in\Om$ and let $(x_h)_h\subseteq\Om$ be such that $x_h\to x_0$ as $h\to\infty$. For any $h\in\mathbb N$, by definition of $w$, there exists $y_h\in\partial\Om$ such that
\begin{equation*}
    w(x_h)\geq \olk(x_h,y_h)+g(y_h)-\frac{\|x_0^{-1}\cdot x_h\|}{h}.
\end{equation*}
Therefore we infer that
\begin{equation*}
    \frac{w(x_h)-w(x_0)+\olk(x_0,x_h)}{\|x_0^{-1}\cdot x_h\|}\geq\frac{\olk(x_0,y_h)+g(y_h)-w(x_0)}{\|x_0^{-1}\cdot x_h\|}-\frac{1}{h}\geq-\frac{1}{h}.
\end{equation*}
Letting $h\to\infty$, being the sequence $(x_h)_h$ arbitrary and recalling Proposition \ref{mongequivdef}, we infer that $w$ is a Monge subsolution. Conversely, let $x_0\in\Om$ and assume without loss of generality that $B_{\frac{1}{h}}(x_0,\d)\subseteq\Om$ for any $h\in\mathbb N_+$ large enough. Fix such an $h$ and choose $y_h\in\partial\Om$ such that
\begin{equation*}
    w(x_0)\geq \olk (x_0,y_h)+g(y_h)-\frac{1}{h^2}.
\end{equation*}
Moreover, for any $h$, let $\gamma_h:[0,T_h]\longrightarrow \rr^n$ be a sub-unit curve with the property that $\gamma_h(0)=x_0$, $\gamma_h(T_h)=y_h$ and 
\begin{equation*}
    \olk (x_0,y_h)\geq\int_0^{T_h}\sigma_K^\star(\gamma(t),\Dot\gamma(t))\,dt-\frac{1}{h^2}.
\end{equation*}
Pick $t_h\in(0,T_h)$ such that $\gamma(t_h)\in\partial B_{\frac{1}{h}}(x_0,\d)$ and set $x_h:=\gamma(t_h)$. Then clearly $x_h\to x_0$ as $h\to\infty$ and therefore, by definition of $w$ and the choice of $(\gamma_h)_h$, we infer that
\begin{equation*}
    w(x_h)-w(x_0)+\olk (x_0,x_h)\leq \olk(x_h,y_h)-\olk(x_0,y_h)+\olk(x_0,x_h)+\frac{1}{h^2}\leq \frac{2}{h^2}.
\end{equation*}
Noticing that $\|x_0^{-1}\cdot x_h\|=\frac{1}{h}$, we conclude that
\begin{equation*}
    \liminf_{h\to\infty}\frac{w(x_h)-w(x_0)+\olk (x_0,x_h)}{\|x_0^{-1}\cdot x_h\|}\leq\liminf_{h\to\infty}\frac{2}{h}=0,
\end{equation*}
and so $w$ is a Monge supersolution.
\end{proof}
\section{Monge and Viscosity Solutions}\label{equivsect}
In this section we show that, as in the Euclidean setting (cf. \cite{NewSu,monge}), when $H$ is continuous the notions of Monge and viscosity solution coincide. We begin to prove that Monge solutions are viscosity solutions.

\begin{proposition}\label{firstimpl}
Let $H$ be continuous. If $u \in C(\Om)$ is a Monge subsolution (resp. supersolution) to \eqref{eq:HJ}, then $u$ is a viscosity subsolution (resp. supersolution) to \eqref{eq:HJ}.
\end{proposition}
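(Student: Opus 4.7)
The plan is to work with jets, which by Proposition \ref{jetvisco} are equivalent to viscosity tests in the continuous case, and to exploit the identity $\sigma^\star(x,\cdot)=\sigma_{-Z(x)}(\cdot)$: via the standard support--function characterization of closed convex sets one has
\[
p\in Z(x)\;\iff\;\langle p,\xi\rangle+\sigma^\star(x,\xi)\geq 0\quad\text{for all }\xi\in\rr^m.
\]
Both directions are then proved by contradiction, producing a test sequence $x_k\to x_0$ along which the Monge quotient is incompatible with the relevant jet inequality.

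For the subsolution implication, fix $x_0\in\Om$ and $p\in\partial_X^+u(x_0)$ and suppose $p\notin Z(x_0)$. By the characterization above there exists $\xi\in\rr^m\setminus\{0\}$ with $\langle p,\xi\rangle+\sigma^\star(x_0,\xi)<0$. Test along $x_t:=x_0\cdot\delta_t(\xi,0)$: the superjet bound reads
\[
u(x_t)-u(x_0)\leq t\langle p,\xi\rangle+o(t),
\]
since $\pi(x_0^{-1}\cdot x_t)=t\xi$ by \eqref{prodincoord}. Using the horizontal curve $s\mapsto x_0\cdot\delta_s(\xi,0)$, whose horizontal coefficients are constantly $\xi$ (because $X_j^{(1)}\equiv e_j$ in exponential coordinates), together with the continuity of $\sigma^\star(\cdot,\xi)$ provided by Lemma \ref{lm:subFinul}, one gets $d_{\sigma^\star}(x_0,x_t)\leq t\sigma^\star(x_0,\xi)+o(t)$. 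Finally Theorem \ref{sea} and the $\delta$-homogeneity of $d_{\mathbb G}$ give $d_\Om(x_0,x_t)=\Theta(t)$ as $t\to 0^+$. Hence the Monge quotient along $(x_t)$ converges to a strictly negative constant, contradicting $\liminf\geq 0$.

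For the supersolution implication, fix $x_0\in\Om$, $p\in\partial_X^-u(x_0)$ and assume by contradiction $H(x_0,p)<0$. Then $(\text{H}_2)$ places $p$ in the interior of $Z(x_0)$, so $\overline{B_\delta(p)}\subseteq Z(x_0)$ for some $\delta>0$, whence
\[
\langle p,\xi\rangle+\sigma^\star(x_0,\xi)\geq\delta|\xi|\quad\text{for all }\xi\in\rr^m.
\]
In contrast with the previous case, the sequence $x_k\to x_0$ is now arbitrary; set $r_k:=d_\Om(x_0,x_k)$ and, for a fixed $\epsilon\in(0,\delta/3)$, choose sub-unit horizontal curves $\gamma_k:[0,T_k]\to\Om$ from $x_0$ to $x_k$ with $\int_0^{T_k}\sigma^\star(\gamma_k(s),\dot\gamma_k(s))\,ds\leq d_{\sigma^\star}(x_0,x_k)+\epsilon r_k$. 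By Lemma \ref{distaloc} (after a trivial reparametrization) the $\gamma_k$'s lie in a small neighborhood of $x_0$ for $k$ large; combining the continuity of $\sigma^\star(\cdot,\hat\xi)$ uniformly for $\hat\xi\in\mathbb S^{m-1}$ (ensured by compactness) with the $1$-homogeneity in the second variable yields a modulus $\epsilon_k\to 0$ such that $\sigma^\star(\gamma_k(s),a_k(s))\geq\sigma^\star(x_0,a_k(s))-\epsilon_k|a_k(s)|$. The first--layer identity $\pi(x_0^{-1}\cdot x_k)=\int_0^{T_k}a_k(s)\,ds$ (from \eqref{prodincoord} and $X_j^{(1)}\equiv e_j$) and the inequality $\int_0^{T_k}|a_k|\geq d_\Om(x_0,x_k)=r_k$ then combine to give
\[
\langle p,\pi(x_0^{-1}\cdot x_k)\rangle+d_{\sigma^\star}(x_0,x_k)\geq(\delta-\epsilon_k-\epsilon)r_k.
\]
Inserting the subjet bound $u(x_k)-u(x_0)\geq\langle p,\pi(x_0^{-1}\cdot x_k)\rangle+o(r_k)$ shows that the Monge quotient is $\geq\delta/3$ for $k$ large, contradicting $\liminf\leq 0$.

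The main obstacle is the supersolution direction: we cannot choose the sequence realizing the $\liminf$, so the lower bound on the quotient must hold uniformly along arbitrary sequences $x_k\to x_0$. The key ideas are to take the optimality slack proportional to $r_k=d_\Om(x_0,x_k)$ (rather than a fixed $1/k$), so that after division by $r_k$ the slack disappears, and to convert the pointwise strict interiority $p\in\mathrm{int}\,Z(x_0)$ into a uniform lower bound along near-geodesics via the localization provided by Lemma \ref{distaloc}.
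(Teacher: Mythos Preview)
Your proof is correct. The subsolution half is essentially identical to the paper's: both separate $p\notin Z(x_0)$ via a direction $\xi$ and test along the horizontal curve $t\mapsto x_0\cdot\delta_t(\xi,0)$, using continuity of $\sigma^\star(\cdot,\xi)$ to bound $d_{\sigma^\star}$ from above. For the supersolution half the organization differs slightly: the paper first replaces $u(x)-u(x_0)$ by $\langle p,\pi(x_0^{-1}\cdot x)\rangle$, then extracts a limiting direction $(\xi,\eta)$ via the compactness of $\{\delta_{1/t_h}(x_0^{-1}\cdot x_h)\}$ and invokes Proposition~\ref{stimapermongeimplicavisc} to obtain $\langle -\xi,p\rangle\geq\sigma^\star(x_0,\xi)$, which contradicts $p\in\intt Z(x_0)$. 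You instead quantify the interiority as $\langle p,\xi\rangle+\sigma^\star(x_0,\xi)\geq\delta|\xi|$ and prove a uniform lower bound on the Monge quotient along \emph{every} sequence $x_k\to x_0$ by working with near-optimal curves and the localization Lemma~\ref{distaloc}. The underlying analytic ingredient---uniform continuity of $\sigma^\star(\cdot,\hat\xi)$ over $\hat\xi\in\mathbb S^{m-1}$ plus Jensen---is the same; your route avoids the compactness extraction at the cost of inlining what the paper isolates as Proposition~\ref{stimapermongeimplicavisc}, while the paper's packaging yields a reusable intermediate estimate.
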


\begin{proof}
    Let $u$ be a Monge supersolution to \eqref{eq:HJ}, fix $x_0\in\Om$ and $p\in\partial_X^-u(x_0)$. Then it follows that 
    \begin{equation*}
        0\geq\liminf_{x\to x_0}\frac{u(x)-u(x_0)+\ol(x_0,x)}{\|x_0^{-1}\cdot x\|}\geq \liminf_{x\to  x_0}\frac{\langle p,\pi(x_0^{-1}\cdot x)\rangle+\ol(x_0,x)}{\|x_0^{-1}\cdot x\|}
    \end{equation*}
Let $(x_h)_h$ be a minimizing sequence for the right hand side. Let us set $t_h:=\|x_0^{-1}\cdot x_h\|$ and $\xi_h:=\frac{1}{t_h}\pi(x_0^{-1}\cdot x_h)$. In this way, $t_h\to 0^+$ when $h\to\infty$. For any $h\in\mathbb N$, let $\eta_h\in\rr^{n-m}$ be such that
\begin{equation*}
    \delta_{\frac{1}{t_h}}(x_0^{-1}\cdot x_h)=(\xi_h,\eta_h),
\end{equation*}
 where we recall that by $\delta_{\frac{1}{t_h}}$ we mean intrinsic dilations as in Section \ref{carnotgroups}. 
By construction, $(\delta_{\frac{1}{t_h}}(x_0^{-1}\cdot x_h))_h$ is bounded. Then there exists $\xi\in\rr^m$ and $\eta\in\rr^{n-m}$ such that, up to a subsequence, $(\xi_h,\eta_h)\to(\xi,\eta)$ as $h\to\infty$.
Then, by Proposition \ref{stimapermongeimplicavisc} and the choice of $(x_h)_h$, we infer that
\begin{equation*}
    \begin{split}
        \liminf_{x\to x_0}\frac{\langle p,\pi (x_0^{-1}\cdot x)\rangle +\ol(x_0,x)}{\|x_0^{-1}\cdot x\|}&=\liminf_{h\to\infty}\left(\langle p,\xi_h\rangle +\frac{\ol(x_0,x_h)}{t_h}\right)\\
        &=\langle p,\xi\rangle +\liminf_{h\to\infty}\frac{\ol(x_0,x_0\cdot\delta_{t_h}(\xi_h,\eta_h))}{t_h}\\
        &=\langle p,\xi\rangle +\liminf_{h\to\infty}\frac{\ol(x_0,x_0\cdot\delta_{t_h}(\xi,\eta))}{t_h}\\
        &\geq \langle p,\xi\rangle +\sigma^\star(x_0,\xi).
    \end{split}
\end{equation*}
Therefore we conclude that
$\langle-\xi,p\rangle\geq\sigma^\star(x_0,\xi).$
If it was the case that $H(x_0,p)<0$, then 
$p$ is an interior point of $Z(x_0)$.
But then $\langle-\xi,p\rangle<\sigma^\star(x_0,\xi)$, since $q\mapsto\langle-\xi,q\rangle$ is a linear and non-constant, and so it achieves its maximum on $\partial Z(x_0)$. A contradiction then follows. \\
Assume now that $u$ is a Monge subsolution to \eqref{eq:HJ}, let $x_0\in\Om$ and $p\in\partial_X^+u(x_0)$. Assume by contradiction that $H(x_0,p)>0$. Hence, by Hahn--Banach Theorem, there exists $\xi\in \mathbb{S}^{m-1}$ such that 
    $\langle -\xi,p\rangle>\sigma^\star (x_0,\xi).
$
For any $h\in\mathbb N\setminus\{0\}$, let $x_h:=x_0\cdot\delta_{t_h}(\xi,0)$, where $(t_h)_h\subseteq(0,1)$ goes to $0$ as $h\to\infty$. Then $x_h\to x_0$ as $h\to\infty$, and moreover $x_0^{-1}\cdot x_h=(t_h\xi,0)$. Therefore, being $u$ a Monge subsolution, it follows that 
\begin{equation*}
\begin{split}
        0&\leq \liminf_{x\to x_0}\frac{u(x)-u(x_0)+\ol(x_0,x)}{\|x_0^{-1}\cdot x\|}\\
        &\leq \liminf_{x\to x_0}\frac{\langle p,\pi(x_0^{-1}\cdot x)\rangle+ \ol(x_0,x)}{\|x_0^{-1}\cdot x\|}\\
        &\leq\liminf_{h\to\infty}\frac{\langle p,\pi(x_0^{-1}\cdot x_h)\rangle+\ol(x_0,x_h)}{\|x_0^{-1}\cdot x_h\|}\\
        &=\langle p,\xi\rangle+\liminf_{h\to\infty}\frac{\ol(x_0,x_0\cdot\delta_{t_h}(\xi,0))}{t_h}.
\end{split}
\end{equation*}
Let us set $\gamma:[0,1]\longrightarrow\Om$ by $\gamma(t):=x_0\cdot\delta_t(\xi,0)$. Notice that $\Dot\gamma(t)\equiv\xi$, whence $\gamma$ is horizontal. Moreover, since $\xi\in \mathbb{S}^{m-1}$, $\gamma$ is sub-unit. Finally,  $\gamma(0)=x_0$ and $\gamma(t_h)=x_h$. Hence, since the continuity of $H$ implies the continuity of $\sigma^\star(\cdot,\xi)$,  we infer that
\begin{equation*}
\liminf_{h\to\infty}\frac{\ol(x_0,x_0\cdot\delta_{t_h}(\xi,0))}{t_h}\leq\liminf_{h\to\infty}\ave_0^{t_h}\sigma^\star(\gamma(t),\xi)\,dt=\sigma^\star(x_0,\xi).
\end{equation*}
Therefore we conclude that $\langle-\xi,p\rangle\leq\sigma^\star(x_0,\xi)$, a contradiction.

\end{proof}

In order to prove the converse implication, we need some preliminary results.

\begin{proposition}\label{subislipvisc}
    Let $H$ be continuous.
    Let $u\in C(\Om)$ and assume that $u$ is a viscosity subsolution to \eqref{eq:HJ}. Then $u\in W^{1,\infty}_{X,\loc}(\Om)$. 
\end{proposition}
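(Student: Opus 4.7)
The strategy is to reduce the statement to a sub-Riemannian eikonal inequality and then invoke the classical local Lipschitz regularity of its viscosity subsolutions.

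First I would exploit $(\text{H}_3)$: since $Z(x) \subseteq \hat{B}_\alpha(0)$ for every $x \in \Omega$, the implication $H(x, p) \leq 0 \Rightarrow |p| \leq \alpha$ holds pointwise. Consequently, if $\psi \in C^1_X(\Omega)$ touches $u$ from above at some $x_0 \in \Omega$, the viscosity subsolution property applied to \eqref{eq:HJ} yields $H(x_0, X\psi(x_0)) \leq 0$, and therefore $|X\psi(x_0)| \leq \alpha$. This shows that $u$ is a viscosity subsolution to the eikonal inequality
\begin{equation*}
|Xu| - \alpha \leq 0 \quad \text{on } \Omega.
\end{equation*}

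Second I would apply the classical sub-Riemannian viscosity regularity result (cf.\ \cite{D2007,StrofMan,CGPV}): any such viscosity subsolution satisfies $u(x) - u(y) \leq \alpha\, d_\Omega(x, y)$ for $x, y$ in a neighborhood of any fixed point of $\Omega$. The argument goes by comparison with cones. For each fixed $y \in \Omega$, the function $x \mapsto \alpha\, d_\Omega(y, x)$ is a viscosity supersolution of $|Xw| = \alpha$ on $\Omega \setminus \{y\}$ (cf.\ \cite{D2007}), and a doubling-of-variables argument---localized to a small $d_\Omega$-ball around $y$ in order to avoid boundary issues---forces $u(x) \leq u(y) + \alpha\, d_\Omega(y, x)$. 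Since $d_\Omega$ is symmetric, exchanging the roles of $x$ and $y$ yields the two-sided bound $|u(x) - u(y)| \leq \alpha\, d_\Omega(x, y)$, hence $u \in \lip_{\loc}(\Omega, d_\Omega)$. The identification $\lip_{\loc}(\Omega, d_\Omega) = W^{1,\infty}_{X,\loc}(\Omega)$ recalled in Section \ref{calculus} (cf.\ \cite{GN}) then concludes the proof.

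The main obstacle is the doubling-of-variables step in the cone-comparison argument. In the Euclidean setting this is textbook material, but in the sub-Riemannian setting it is delicate because $d_\Omega(y, \cdot)$ is only Lipschitz---and not $C^1_X$---and because $(\Omega, d_\Omega)$ need not be geodesic. Both difficulties can be handled by the localization tools already developed in the paper: Lemma \ref{distaloc} and Proposition \ref{locgeo} provide enough local geodesic structure to make the cone comparison rigorous, while Proposition \ref{loceq} allows one to interchange $d_\Omega$ with the smoother homogeneous distance $\d$ when producing admissible $C^1_X$ test functions.
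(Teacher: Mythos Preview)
Your reduction to the eikonal inequality $|Xu|\leq\alpha$ via $(\text{H}_3)$ is exactly the paper's first step. The divergence is in how to extract local Lipschitz regularity from that inequality. You propose to run a sub-Riemannian cone-comparison argument directly, comparing $u$ with $x\mapsto\alpha\,d_\Om(y,x)$ via doubling of variables, and you correctly flag that this is delicate because $d_\Om(y,\cdot)$ is merely Lipschitz and $(\Om,d_\Om)$ need not be geodesic. The paper avoids this entirely: it invokes Proposition~\ref{euclivscarnot} to pass from the sub-Riemannian viscosity inequality $|Xu|\leq\alpha$ to the \emph{Euclidean} viscosity inequality $\tilde H(x,\nabla u)\leq 0$ with $\tilde H(x,v)=|v\cdot C(x)^T|-\alpha$, and then cites \cite[Proposition~2.1]{soravia}, which already contains the Lipschitz regularity for viscosity subsolutions of such degenerate Euclidean eikonal equations.

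What each approach buys: your route is self-contained within the sub-Riemannian framework and would make the paper independent of the Euclidean literature at this point, but at the cost of reproving a nontrivial comparison argument whose details you yourself identify as the main obstacle. The paper's route is a two-line black-box argument once Proposition~\ref{euclivscarnot} is in place, and since that proposition is needed elsewhere anyway, the marginal cost is zero. Your sketch is sound in outline, but the localization you propose (via Lemma~\ref{distaloc}, Proposition~\ref{locgeo}, Proposition~\ref{loceq}) would still require care to turn into a full proof; the paper's shortcut is the cleaner choice here.
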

\begin{proof}
    Let $x_0\in\Om$ and $p\in \partial_X^+ u(x_0)$ with $p\neq 0$. Then $H(x_0,p)\leq 0$, which implies that $p\in Z(x_0)$. Therefore it holds that $|p|\leq \alpha$ by $(H_3)$.
    Hence $u$ is a viscosity subsolution to  \begin{equation*}
        |Xu|\leq\alpha
    \end{equation*}
    on $\Om$.
   Thanks to Proposition \ref{euclivscarnot} and \cite[Proposition 2.1]{soravia}, we conclude that $u\in W^{1,\infty}_{X,\loc}(\Om)$.
\end{proof}

 \begin{proposition}\label{MonforLipv}
Assume that $H$ is continuous. If $u$ is a viscosity subsolution to \eqref{eq:HJ} in $\Om$, then
\begin{equation}\label{liprisps}
u(x)-u(y) \leq d_{\sigma^\star}(x,y) 
\end{equation}
for any $x,y\in \Om$. 
\end{proposition}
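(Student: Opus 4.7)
The plan is to bound $u(x)-u(y)$ by the $\sigma^\star$-length of a nearly $\ol$-optimal sub-unit curve joining $x$ to $y$, exploiting the chain rule along horizontal curves provided by Proposition \ref{w2}. First, Proposition \ref{subislipvisc} yields $u\in W^{1,\infty}_{X,\loc}(\Om)$, and Proposition \ref{equivsol} (applicable because $Z(x)$ is convex by $(\text{H}_2)$) gives $H(x,Xu(x))\leq 0$ for a.e.\ $x$, i.e., $Xu(x)\in Z(x)$ for a.e.\ $x\in\Om$. I would then fix a Lebesgue-negligible set $N\subseteq\Om$ containing the non-Lebesgue points of $Xu$ together with the null set $\{x\in\Om : H(x,Xu(x))>0\}$, so that $Xu(y)\in Z(y)$ for every $y\in\Om\setminus N$, and work with the associated subgradient $\sub$.

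Next, fix $x,y\in\Om$ and $\varepsilon>0$. Using \eqref{induceddd2}, pick a sub-unit horizontal curve $\gamma\in\mathcal H(\Om)$, $\gamma\colon [0,T]\to\Om$, with $\gamma(0)=x$, $\gamma(T)=y$ and $\int_0^T \sigma^\star(\gamma(t),\dot\gamma(t))\,dt\leq \ol(x,y)+\varepsilon$. Proposition \ref{w2}, applied on the compact image of $\gamma$, then gives $u\circ\gamma\in W^{1,\infty}(0,T)$ with $\frac{d}{dt}(u\circ\gamma)(t)=\langle\vartheta(t),a(t)\rangle$ for a.e.\ $t$, where $a(t)$ are the horizontal coordinates of $\dot\gamma(t)$ and $\vartheta(t)\in\sub(\gamma(t))$. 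The crucial step is to show that $\vartheta(t)\in Z(\gamma(t))$ for a.e.\ $t$: since $H$ is continuous, the set $\{(z,p)\in\Om\times\rr^m : H(z,p)\leq 0\}$ is closed, so for every sequence $y_n\to\gamma(t)$ in $\Om\setminus N$ along which $Xu(y_n)$ converges, the relation $H(y_n,Xu(y_n))\leq 0$ passes to the limit to give $\lim Xu(y_n)\in Z(\gamma(t))$. Because $Z(\gamma(t))$ is closed and convex by $(\text{H}_2)$, it contains the closed convex hull of such limits, hence the entire set $\sub(\gamma(t))$.

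Finally, with $\vartheta(t)\in Z(\gamma(t))$ in hand, the definition \eqref{supportfunct} of $\sigma^\star$ yields $-\langle\vartheta(t),a(t)\rangle\leq \sigma^\star(\gamma(t),a(t))=\sigma^\star(\gamma(t),\dot\gamma(t))$ for a.e.\ $t\in(0,T)$. The fundamental theorem of calculus for $u\circ\gamma$ then produces
\[
u(x)-u(y) \;=\; -\int_0^T \langle\vartheta(t),a(t)\rangle\,dt \;\leq\; \int_0^T \sigma^\star(\gamma(t),\dot\gamma(t))\,dt \;\leq\; \ol(x,y)+\varepsilon,
\]
and letting $\varepsilon\to 0$ concludes the proof. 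The main obstacle is the identification $\vartheta(t)\in Z(\gamma(t))$: one has to reconcile the pointwise a.e.\ relation $Xu\in Z$ with the closed-convex-hull definition of $\sub$, which relies in an essential way on the continuity of $H$ and on the structural assumptions $(\text{H}_2)$--$(\text{H}_3)$.
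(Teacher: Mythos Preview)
Your proof is correct and follows essentially the same route as the paper's: both rely on Proposition~\ref{subislipvisc}, Proposition~\ref{equivsol}, and Proposition~\ref{w2}, then estimate $-\langle\vartheta(t),a(t)\rangle$ by $\sigma^\star(\gamma(t),\dot\gamma(t))$ via the inclusion $\vartheta(t)\in Z(\gamma(t))$. The only cosmetic differences are that the paper cites \cite[Lemma~2.7]{PVW} for the inclusion $\partial_{X,N}u(z)\subseteq Z(z)$ (which you reprove inline using continuity of $H$ and convexity of $Z(z)$), and that the paper bounds along an arbitrary sub-unit curve before taking the infimum, whereas you work directly with an $\varepsilon$-optimal one.
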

\begin{proof}
 Let $x,y\in\Om$. If $x=y$ the thesis is trivial. If instead $x\neq y$, let $\gamma :[0,T]\longrightarrow\Om$ be a sub-unit curve such that $\ga(0)=x$ and $\gamma (T)=y$ for some $T>0$.
Thanks to Proposition \ref{subislipvisc} and Proposition \ref{equivsol} we know that $u\in W^{1,\infty}_{X,\loc}(\Om)$ and that 
\begin{equation}\label{hzzzz}
    H(z,Xu(z))\leq 0
\end{equation}
for almost every $z\in\Om$.
Let $N$ be a Lebesgue negligible subset of $\Om$ containing all the non-Lebesgue points of $Xu$ and all the points where \eqref{hzzzz} does not hold. Then, in view of \cite[Lemma 2.7]{PVW}, we infer that
   $ H(z,p)\leq 0$
for any $z\in\Om$ and for any $p\in\partial_{X,N}u(z)$. Therefore, in particular,
\begin{equation}\label{pinz}
    p\in Z(z)
\end{equation}
for any $z\in\Om$ and for any $p\in\partial_{X,N}u(z)$.
Hence, thanks to the definition of $\sigma^\star$, \eqref{pinz} and Proposition \ref{w2}, we conclude that
\begin{equation*}
    u(x)-u(y) = u(\gamma(0))-u(\gamma(T))=-\int_0^T\frac{d (u \circ \gamma)(t)}{d t}\,dt=\int_0^T\langle \Dot\gamma(t),-\vartheta(t)\rangle\,dt\leq\int_0^T\sigma^\star (\gamma(t),\Dot\gamma(t))\,dt,
\end{equation*}
 where $\vartheta$ is as in Proposition \ref{w2}. Since $\gamma$ is arbitrary, the thesis follows.
\end{proof}
\begin{proposition}\label{secondimplsub}
    Let $H$ be a continuous Hamiltonian satisfying {\rm{(H)}}. Let $u\in C(\Om)$ be a viscosity subsolution to \eqref{eq:HJ}. Then $u$ is a Monge subsolution to \eqref{eq:HJ}. 
\end{proposition}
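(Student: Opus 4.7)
The plan is simply to extract the Monge subsolution property at every point from the pointwise inequality already established in Proposition \ref{MonforLipv}. The content of Monge subsolution is the infinitesimal relation $\liminf\geq 0$, and Proposition \ref{MonforLipv} delivers the corresponding inequality pointwise on all of $\Om\times\Om$, so the conclusion follows by a one-line rearrangement.

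Fix $x_0\in\Om$ arbitrarily. By Proposition \ref{MonforLipv}, applied with the roles of $x$ and $y$ exchanged (namely taking $x_0$ and a free variable $x\in\Om$ in place of $x$ and $y$ respectively), we have
$$
u(x_0)-u(x)\leq \ol(x_0,x),
$$
which rearranges to
$$
u(x)-u(x_0)+\ol(x_0,x)\geq 0\qquad\text{for every }x\in\Om.
$$
For $x\neq x_0$ we divide by $d_\Om(x_0,x)>0$ and pass to the $\liminf$ as $x\to x_0$; since non-negativity is preserved under $\liminf$, we obtain
$$
\liminf_{x\to x_0}\frac{u(x)-u(x_0)+\ol(x_0,x)}{d_\Om(x_0,x)}\geq 0,
$$
which, recalling Definition \ref{mongedefinizione}, is exactly the Monge subsolution condition at $x_0$. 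Since $x_0\in\Om$ was arbitrary, $u$ is a Monge subsolution to \eqref{eq:HJ}.

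No genuine obstacle arises in this step: the substantive work has already been absorbed into Proposition \ref{MonforLipv}, whose proof combines the Lipschitz regularity of viscosity subsolutions (Proposition \ref{subislipvisc}), the pointwise a.e.\ relation $H(x,Xu(x))\leq 0$ (Proposition \ref{equivsol}), and the $(X,N)$-subgradient calculus along horizontal curves (Proposition \ref{w2}) to produce the global bound $u(x)-u(y)\leq\ol(x,y)$. Once that bound is in hand, turning it into the Monge subsolution condition requires nothing more than relabelling variables and taking a $\liminf$; no further differential or metric analysis is needed.
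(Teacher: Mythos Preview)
Your proof is correct and follows essentially the same approach as the paper: both invoke Proposition~\ref{MonforLipv} with $x_0$ in place of $x$ and the free variable $x$ in place of $y$ to obtain $u(x)-u(x_0)+\ol(x_0,x)\geq 0$, from which the Monge subsolution inequality is immediate.
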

\begin{proof}
    Let $x_0\in\Om$. Hence, in view of \eqref{liprisps}, 
    we infer that
    \begin{equation*}
        u(x)-u(x_0)+\ol(x_0,x)\geq 0
    \end{equation*}
    for any $x\in\Om$, from which the thesis easily follows.
\end{proof}
In order to prove that viscosity supersolutions are Monge supersolutions, we argue as in \cite{NewSu}. To this aim, we combine Theorem \ref{mongedir} and Proposition \ref{firstimpl} to show the solvability of the Dirichlet problem associated to a continuous Hamiltonian in the setting of viscosity solutions.
\begin{theorem}\label{viscodir}
    Let $H$ be a continuous Hamiltonian satisfying {\rm{(H)}}, and let $g\in C(\partial\Om)$ be bounded and such that \eqref{compcond} holds.  Then the function $w$ defined by \eqref{hopflax} is a viscosity solution to the Dirichlet problem 
\begin{align*}
    H(x,Xw) &= 0 \quad \mbox{in } \Om \\
    w &= g \quad \mbox{on } \partial \Om.
\end{align*}
\end{theorem}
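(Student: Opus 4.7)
The plan is to obtain this result as a direct combination of the two main theorems already established, namely the Hopf--Lax formula (Theorem \ref{mongedir}) and the implication \emph{Monge $\Rightarrow$ viscosity} in the continuous setting (Proposition \ref{firstimpl}). Since the hypotheses of Theorem \ref{mongedir} are exactly the structural assumptions (H) together with the compatibility condition \eqref{compcond} on $g$, and since these are part of the assumptions here, no extra work on the Hamiltonian or on the boundary datum is required.

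First, I would apply Theorem \ref{mongedir} to the function $w$ defined by \eqref{hopflax}. This immediately yields that $w \in \lip(\Om, d_\Om) \cap C(\overline{\Om})$, that $w = g$ on $\partial\Om$, and that $w$ is a Monge solution to $H(x,Xw) = 0$ in $\Om$; in particular, $w$ is simultaneously a Monge subsolution and a Monge supersolution on $\Om$.

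Next, since $H$ is assumed continuous in this theorem, I would invoke Proposition \ref{firstimpl} twice: once to upgrade the Monge subsolution property of $w$ to a viscosity subsolution property, and once to upgrade the Monge supersolution property of $w$ to a viscosity supersolution property. Combining these two statements yields that $w$ is a viscosity solution to $H(x, Xw) = 0$ in $\Om$. Together with the already established identity $w = g$ on $\partial\Om$ and the continuity of $w$ up to the boundary, this produces a viscosity solution to the Dirichlet problem as claimed.

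There is essentially no real obstacle here: the theorem is by design a corollary of the two preceding main results, and the only thing to verify is that the hypotheses of Theorem \ref{mongedir} and Proposition \ref{firstimpl} are simultaneously met, which is the case since $H$ is continuous and satisfies (H), and $g \in C(\partial\Om)$ is bounded and satisfies \eqref{compcond}. The interesting content of the statement is conceptual rather than technical: it shows that in the continuous sub-Riemannian setting the Hopf--Lax formula \eqref{hopflax}, built intrinsically from the optical length function $\olk$ of any extension $K \in \mathcal K(H,\Om)$, provides not merely a Monge solution but a \emph{bona fide} viscosity solution to the Dirichlet problem.
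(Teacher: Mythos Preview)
Your proposal is correct and matches the paper's own argument: the paper does not even spell out a proof of Theorem \ref{viscodir}, but simply states it as the combination of Theorem \ref{mongedir} and Proposition \ref{firstimpl}, exactly as you do.
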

We need also the following sub-Riemannian comparison principle, whose proof is inspired by \cite{barles}.
\begin{proposition}\label{viscocomp}
Let $\Om$ be a bounded domain. Assume that $H$ is continuous and satisfies {\rm (H)}. Assume that $u\in C(\overline{\Om})\cap W^{1,\infty}_{X,\loc}(\Om)$ is a viscosity subsolution to \eqref{eq:HJ} on $\Om$ and that $v\in C(\overline\Om)$ is a viscosity supersolution to \eqref{eq:HJ} on $\Om$. If $u\leq v$ on $\partial\Om$, then $u\leq v$ on $\overline\Om$. 
\end{proposition}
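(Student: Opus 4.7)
The plan is to argue by contradiction via a doubling-of-variables argument in the spirit of \cite{barles}, crucially exploiting the Lipschitz regularity of $u$. Suppose $M := \max_{\overline\Om}(u-v) > 0$; since $u \le v$ on $\partial\Om$ and $u,v\in C(\overline\Om)$, the maximum is attained at an interior point $z_0 \in \Om$. A first reduction replaces $u$ by a strict viscosity subsolution: since $0 \in \hat B_{1/\alpha}(0)$ lies in the interior of $Z(x)$, combining $({\rm H}_2)$ and $({\rm H}_3)$ gives $H(x,0) < 0$ on $\Om$, and convexity of $Z(x)$ together with continuity of $H$ and compactness shows that for $\lambda \in (0,1)$ close enough to $1$, the function $u^\lambda := \lambda u + (1-\lambda)\min_{\overline\Om} v$ is a viscosity subsolution of $H(x, X u^\lambda) \le -\eta_\lambda$ on any compact $K \Subset \Om$, for some $\eta_\lambda > 0$. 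Furthermore $u^\lambda \le v$ on $\partial\Om$ and $M_\lambda := \max_{\overline\Om}(u^\lambda - v) > 0$ is still attained inside $\Om$.

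For the doubling I would consider
\[
\Phi_\varepsilon(x,y) := u^\lambda(x) - v(y) - \frac{1}{\varepsilon}\,\d(x,y)^{2k!}
\]
on $\overline\Om \times \overline\Om$, observing that $\d(x,y)^{2k!} = \|x^{-1}\cdot y\|^{2k!}$ is a polynomial function in exponential coordinates, since every exponent $2k!/j$ is a positive even integer, and hence globally $C^\infty$. A maximizer $(x_\varepsilon, y_\varepsilon)$ exists by compactness, and the standard arguments give, up to a subsequence, $(x_\varepsilon, y_\varepsilon) \to (\bar z, \bar z) \in \Om \times \Om$ with $\d(x_\varepsilon, y_\varepsilon) \to 0$. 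Comparing $\Phi_\varepsilon(x_\varepsilon, y_\varepsilon) \ge \Phi_\varepsilon(y_\varepsilon, y_\varepsilon)$ with the Lipschitz character of $u^\lambda$ and Proposition \ref{loceq} produces the quantitative rate $\d(x_\varepsilon, y_\varepsilon)^{2k!-1} \le C \varepsilon$. If $x_\varepsilon = y_\varepsilon$ along a subsequence, the smooth test function $y \mapsto u^\lambda(x_\varepsilon) - \varepsilon^{-1} \d(x_\varepsilon, y)^{2k!}$ touches $v$ from below at $y_\varepsilon = x_\varepsilon$ with vanishing horizontal gradient (since $\|\cdot\|^{2k!}$ vanishes to order at least $2$ at the identity), so the supersolution inequality yields $H(x_\varepsilon,0) \ge 0$, contradicting $H(x_\varepsilon,0) < 0$.

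In the remaining case $x_\varepsilon \ne y_\varepsilon$, the two smooth test functions extracted from $\Phi_\varepsilon$ produce horizontal vectors $p_\varepsilon := \varepsilon^{-1}\, X_x \d^{2k!}(x_\varepsilon, y_\varepsilon)$ and $q_\varepsilon := -\varepsilon^{-1}\, X_y \d^{2k!}(x_\varepsilon, y_\varepsilon)$ satisfying $H(x_\varepsilon, p_\varepsilon) \le -\eta_\lambda$ and $H(y_\varepsilon, q_\varepsilon) \ge 0$, with $|p_\varepsilon|\le \alpha$ by $({\rm H}_3)$. Subtracting the two inequalities gives
\[
0 < \eta_\lambda \le H(y_\varepsilon, q_\varepsilon) - H(x_\varepsilon, p_\varepsilon).
\]
The hard part, distinctive of the sub-Riemannian setting, is to show that the right-hand side is infinitesimal as $\varepsilon \to 0$: in the Euclidean analogue of \cite{monge} one has $p_\varepsilon = q_\varepsilon$ identically, whereas the non-commutativity of $\G$ makes the sum $X_x \d^{2k!} + X_y \d^{2k!}$ carry commutator-type corrections dictated by the Baker--Campbell--Hausdorff formula. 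Combining an explicit computation of these corrections in exponential coordinates with the rate $\d(x_\varepsilon, y_\varepsilon)^{2k!-1} \le C \varepsilon$ and the uniform continuity of $H$ on $K \times \hat B_\alpha(0)$, one then closes the argument against $\eta_\lambda > 0$, and letting $\lambda \to 1^-$ finally gives $u \le v$ on $\overline\Om$.
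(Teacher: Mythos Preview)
Your overall scheme matches the paper's: pass to a strict subsolution (the paper sets $w=\delta u$ with $\delta\in(0,1)$; your $u^\lambda=\lambda u+(1-\lambda)\min_{\overline\Om}v$ plays the same role), double variables with the smooth penalty $\d(x,y)^{2k!}$, localize the maximizers $(x_\varepsilon,y_\varepsilon)$ to a fixed $\tilde\Om\Subset\Om$, and use the $\d$-Lipschitz regularity of the subsolution on $\tilde\Om$ to obtain the rate $\d(x_\varepsilon,y_\varepsilon)^{2k!-1}\le C\varepsilon$ that bounds the test gradients. The paper then works with a single vector $p_\varepsilon$, extracts a subsequential limit $p_\varepsilon\to\bar p$ together with $x_\varepsilon,y_\varepsilon\to\bar x$, and reads off the contradiction $H(\bar x,\bar p)+\eta\le H(\bar x,\bar p)$ from the continuity of $H$.

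The step where your outline remains genuinely incomplete is the last one. You are right that in a non-abelian group the two horizontal gradients $p_\varepsilon=\varepsilon^{-1}X_x\d^{2k!}(x_\varepsilon,y_\varepsilon)$ and $q_\varepsilon=-\varepsilon^{-1}X_y\d^{2k!}(x_\varepsilon,y_\varepsilon)$ need not coincide, but the remedy you sketch does not close the gap. Writing $z=x_\varepsilon^{-1}\cdot y_\varepsilon$ and $N=\|\cdot\|$, left-invariance together with the symmetry $N(z)=N(z^{-1})$ of the gauge gives
\[
p_\varepsilon-q_\varepsilon=\varepsilon^{-1}\big((XN^{2k!})(-z)+(XN^{2k!})(z)\big),
\]
and each component of this sum is a $\delta_\lambda$-homogeneous polynomial of degree \emph{exactly} $2k!-1$: the Euclidean leading part $\partial_jN^{2k!}$ is odd and cancels, while the cross-terms coming from the higher-layer coefficients of the $X_j$ survive. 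Hence $|p_\varepsilon-q_\varepsilon|\le C\,\d(x_\varepsilon,y_\varepsilon)^{2k!-1}/\varepsilon$, and your Lipschitz rate says only that this quantity is \emph{bounded}, not $o(1)$. In $\mathbb H^1$, for instance, a direct computation gives that $X_1^x\d^4+X_1^y\d^4$ is a nonzero multiple of $z_2\,z_3$, so $|p_\varepsilon-q_\varepsilon|$ is genuinely of order $\d^3/\varepsilon=O(1)$. Consequently, along subsequences one may have $p_\varepsilon\to\bar p$ and $q_\varepsilon\to\bar q$ with $\bar p\neq\bar q$, and uniform continuity of $H$ alone does not furnish a contradiction with $\eta_\lambda>0$. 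A further idea is needed here; the remark the paper places just after the proof points to regularizing the subsolution by group convolution as an alternative route that sidesteps this difficulty.
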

\begin{proof}
    We can assume without loss of generality that $u,v>0$. Let us fix $\delta\in (0,1)$ and set $w:=\delta u$. Clearly $w\in C(\overline\Om)\cap W_{X,\loc}^{1,\infty}(\Om)$ and $w\leq v$ on $\partial\Om$. If we prove that $w\leq v$ on $\Om$, then the thesis follows letting $\delta\to 1$. \\
    \textbf{Step 1.}    
    We first claim that for any $\tilde\Om\Subset\Om$ there exists $\eta>0$ such that
   $w$ is a viscosity subsolution to 
   $$H(x,Xw)+\eta=0 \quad \text{on} \quad \tilde\Om.$$  If it was not the case, then there exists $\tilde\Om\Subset\Om$ and sequences $(x_h)_h\subseteq\tilde\Om$, $(p_h)_h\subseteq \rr^m$ such that $p_h\in\partial^+_X w(x_h)$ and 
   \begin{equation*}
       H(x_h,p_h)+\frac{1}{h} > 0
   \end{equation*}
   for any $h\in\mathbb N_+$. Since by assumption $Z(x_h)\subseteq\hat B_\alpha(0)$ for any $h\in\mathbb N_+$, then we can assume up to a subsequence that $x_h\to \Tilde{x}\in\Om$ and $p_h\to \Tilde{p}\in\rr^m$. Being $H$ continuous, we infer that 
       $H(\Tilde{x}, \Tilde{p})\geq 0.$
   On the other hand, notice that
   $\frac{p_h}{\delta}\in\partial^+_Xu(x_h)$ for any $h\in\mathbb N_+$, and so, being $u$ a subsolution, we infer that
$H\left(x_h,\frac{p_h}{\delta}\right)\leq 0.$
   Since $H$ is continuous, we conclude that
   \begin{equation*}
         H\left(\Tilde{x},\frac{\Tilde{p}}{\delta}\right)\leq 0.
   \end{equation*}
The last equation implies that $\frac{\Tilde p}{\delta}\in Z(\Tilde x)$. But then, being $Z(\Tilde x)$ convex and since $|\Tilde p|<\frac{|\Tilde p|}{\delta}$, we conclude that 
$\Tilde p$ is an interior point of $Z(\bar x)$, and so $H(\Tilde x,\Tilde p)<0$, a contradiction.\\
\textbf{Step 2.} Let us define $M:=\max_{\overline\Om} (w-v) $, and assume by contradiction that $M>0.$ Let us define, for any $\varepsilon\in(0,1)$,
\begin{equation*}
    \varphi_\varepsilon(x,y):=w(x)-v(y)-\frac{\d(x,y)^{2k!}}{\varepsilon ^2},
\end{equation*}
where $\d$ is the well-known homogeneous distance introduced in \eqref{od}.
Being $\varphi_\varepsilon$ continuous on $\overline\Om\times\overline\Om$, there exists $(x_\varepsilon,y_\varepsilon)\in\overline\Om\times\overline\Om$ such that 
\begin{equation*}
M_\varepsilon:=\max_{\overline\Om\times\overline\Om}\varphi_\varepsilon=\varphi_\varepsilon(x_\varepsilon,y_\varepsilon).
\end{equation*}
\textbf{Step 3}. We claim the following facts.
\begin{itemize}
    \item [$(i)$] $M_\varepsilon\to M$ as $\varepsilon\to 0$.
    \item[$(ii)$] $w(x_\varepsilon)-v(y_\varepsilon)\to M$ as $\varepsilon \to 0$.
    \item[$(iii)$] $\frac{\d(x_\varepsilon,y_\varepsilon)^{2k!}}{\varepsilon^2}\to 0$ as $\varepsilon\to 0$.
    \item [$(iv)$]Let us set
    \begin{equation*}
        p_\varepsilon:=\frac{(2k!)\d(x_\varepsilon,y_\varepsilon)^{2k!-1}X\d(x_\varepsilon,y_\varepsilon)}{\varepsilon^2}.
    \end{equation*}
    Then $(p_\varepsilon)_\varepsilon$ is bounded.
    \item [$(v)$] There exists $\tilde\Om\Subset\Om$ such that $x_\varepsilon,y_\varepsilon\in\tilde\Om$ for any $\varepsilon$ small enough.
\end{itemize}
 Indeed, since from the choice of $(x_\varepsilon,y_\varepsilon)$ it is easy to see that $M\leq M_\varepsilon$ for any $\varepsilon\in (0,1)$. Let us set $R:=\max\{\|w\|_{\infty},\|v\|_\infty\}$.
Then we have that
\begin{equation*}
    M\leq 2R-\frac{\d(x_\varepsilon,y_\varepsilon)^{2k!}}{\varepsilon^2}.
\end{equation*}
Since we assumed that $M>0$, we infer that
\begin{equation*}
    \frac{\d(x_\varepsilon,y_\varepsilon)^{2r!}}{\varepsilon^2}\leq 2R.
\end{equation*}
This implies in particular that $\d(x_\varepsilon,y_\varepsilon)\to 0$ as $\varepsilon\to 0$. This fact, together with the compactness of $\overline\Om$, allows to assume up to a subsequence that there exists $\bar x\in\overline\Om$ such that
\begin{equation}\label{samelimit}
    \lim_{\varepsilon\to 0}\d(x_\varepsilon ,\bar x)= \lim_{\varepsilon\to 0}\d(y_\varepsilon ,\bar x)=0.
\end{equation}
Moreover, notice that $M\leq M_\varepsilon$ implies that
    $M\leq w(x_\varepsilon)-v(y_\varepsilon)$
for any $\varepsilon>0$. This last inequality, together with \eqref{samelimit}, implies that
\begin{equation}\label{stima}
    M\leq\liminf_{\varepsilon\to 0}w(x_\varepsilon)-v(y_\varepsilon)\leq\limsup_{\varepsilon\to 0}w(x_\varepsilon)-v(y_\varepsilon)\leq M.
\end{equation}
This proves $(ii)$. The fact that $M\leq M_\varepsilon$, combined with \eqref{stima}, allows to conclude that
\begin{equation*}
    M\leq\liminf_{\varepsilon\to 0}M_\varepsilon\leq \lim_{\varepsilon\to 0}w(x_\varepsilon)-v(y_\varepsilon)=M.
\end{equation*}
This proves $(i)$ and $(iii)$. To prove $(v)$, it suffices to observe that
\begin{equation*}
    M=\lim_{\varepsilon\to 0}w(x_\varepsilon)-v(y_\varepsilon)=w(\bar x)-v(\bar x),
\end{equation*}
and thus, recalling that $M>0$ and that $w\leq v$ on $\partial\Om$, $(v)$ follows. Finally, we prove $(iv)$. 
Indeed, notice that, in view of the choice of $x_\varepsilon,y_\varepsilon$, then
\begin{equation*}
    w(y_\varepsilon)-v(y_\varepsilon)=\varphi_\varepsilon(y_\varepsilon,y_\varepsilon)\leq\varphi(x_\varepsilon,y_\varepsilon)=w(x_\varepsilon)-v(y_\varepsilon)-\frac{\d(x_\varepsilon,y_\varepsilon)^{2k!}}{\varepsilon^2},
\end{equation*}
which implies that
\begin{equation*}
    \frac{\d(x_\varepsilon,y_\varepsilon)^{2k!}}{\varepsilon^2}\leq w(x_\varepsilon)-w(y_\varepsilon)\leq C\d(x_\varepsilon,y_\varepsilon),
\end{equation*}
where $C>0$ is the $\d-$Lipschitz constant of $w$ on $\tilde\Om$. Therefore 
\begin{equation*}
    \frac{\d(x_\varepsilon,y_\varepsilon)^{2k!-1}}{\varepsilon^2}\leq C.
\end{equation*} The proof is concluded noticing that $z\mapsto X\d(z_0,z)$ is bounded on $\Om\setminus\{z_0\}$ uniformly with respect to $z_0\in\Om$. \\
\textbf{Step 4.} Let us define 
\begin{equation*}
 \varphi^1_\varepsilon(y):=w(x_\varepsilon)-\frac{\d(x_\varepsilon,y)^{2k!}}{\varepsilon^2}
 \qquad\text{and}\qquad 
\varphi^2_\varepsilon(x):=v(y_\varepsilon)+\frac{\d(x,y_\varepsilon)^{2k!}}{\varepsilon^2}
\end{equation*}
for any $x,y\in\Om$. These are smooth functions on $\Om$. Moreover, $x_\varepsilon$ is a maximum point for $x\mapsto w(x)-\varphi_\varepsilon^2(x)$ and $y_\varepsilon$ is a maximum point for $y\mapsto -v(y)+\varphi_\varepsilon^1(y)$. Therefore, if $\eta>0$ is the constant coming from Step 1 and relative to $\tilde\Om$ as in $(v)$, then
\begin{equation*}
    H(x_\varepsilon,p_\varepsilon)+\eta\leq H(y_\varepsilon,p_\varepsilon).
\end{equation*}
Being $(p_\varepsilon)_\varepsilon$ bounded, we can assume that $p_\varepsilon\to\bar p$ as $\varepsilon\to 0$. Therefore we conclude from the previous inequality that
\begin{equation*}
    H(\bar x,\bar p)+\eta\leq H(\bar x,\bar p),
\end{equation*}
a contradiction.
\end{proof}
\begin{remark}
 Alternatively, the proof of Proposition \ref{viscocomp} could be carried out, following \cite[Lemma 2.7]{Barles94} and \cite[Lemma 6.3]{FaSi05}, by regularizing the viscosity subsolution $w$ via mollification. Instead, the above doubling variable argument avoids the otherwise necessary recourse to the  \emph{group convolution} as in \cite[Proposition 1.20]{FS}.
\end{remark}

\begin{proposition}\label{viscohenceMonge}
Let $H$ be continuous. Let $u\in C(\Om)$ be a viscosity supersolution to \eqref{eq:HJ}. Then $u$ is a Monge supersolution to \eqref{eq:HJ}. 
\end{proposition}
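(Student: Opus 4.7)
The plan is to argue by contradiction, constructing a local Hopf--Lax function on a small $d_\Om$-ball around the chosen point and using the viscosity comparison principle to reach a contradiction. Fix $x_0 \in \Om$ and suppose the Monge supersolution inequality fails at $x_0$: then there exist $\delta > 0$ and $r > 0$ such that $\overline{B_r(x_0, d_\Om)} \subset \Om$ and
\[
u(x) - u(x_0) + \ol(x_0, x) \geq \delta \, d_\Om(x_0, x)
\]
for every $x \in \overline{B_r(x_0, d_\Om)} \setminus \{x_0\}$, the extension to $\partial B_r(x_0, d_\Om)$ being a consequence of the continuity of $u$ and $\ol(x_0, \cdot)$. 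Pick any $K \in \mathcal{K}(H, \Om)$ (nonempty by \eqref{numeraperdopo}); thanks to Lemma \ref{equivsigma} and Proposition \ref{mongequivdef}, after shrinking $r$ further if needed, we may assume $\olk(x_0, y) = \ol(x_0, y)$ for every $y \in \overline{B_r(x_0, d_\Om)}$.

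Set $B := B_r(x_0, d_\Om)$ and define the Hopf--Lax-type function
\[
w(x) := \inf_{y \in \partial B}\bigl\{\olk(x, y) + u(y)\bigr\}, \qquad x \in \overline{B}.
\]
A direct re-inspection of the proof of Theorem \ref{mongedir} shows that the arguments establishing the Monge sub- and supersolution properties in the interior rely only on the infimum definition of $w$ and on the triangle inequality for $\olk$; the compatibility condition \eqref{compcond} intervenes solely in the identification $w = g$ on $\partial\Om$. The same argument therefore yields that $w$ is a Monge solution to $H(x, Xw) = 0$ in $B$, and the same estimate also gives $w \in \lip(\overline{B}, d_\Om)$. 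By Proposition \ref{firstimpl}, $w$ is a viscosity solution on $B$, and by Proposition \ref{subislipvisc} we have $w \in W^{1,\infty}_{X,\loc}(B)$. Choosing $y = x$ in the defining infimum yields $w(x) \leq u(x)$ for every $x \in \partial B$.

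Applying Proposition \ref{viscocomp} on the bounded domain $B$ to the viscosity subsolution $w$ (which lies in $C(\overline B) \cap W^{1,\infty}_{X,\loc}(B)$) and to the viscosity supersolution $u$, we deduce $w \leq u$ on $\overline B$; in particular $w(x_0) \leq u(x_0)$. On the other hand, for every $y \in \partial B$ we have $d_\Om(x_0, y) = r$ and hence, by the contradiction hypothesis together with $\olk(x_0, y) = \ol(x_0, y)$,
\[
\olk(x_0, y) + u(y) \geq u(x_0) + \delta r.
\]
Taking the infimum over $y \in \partial B$ gives $w(x_0) \geq u(x_0) + \delta r > u(x_0)$, contradicting $w(x_0) \leq u(x_0)$.

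The main obstacle I anticipate is that Theorem \ref{mongedir} is formulated under the compatibility condition \eqref{compcond}, which in general fails for the boundary datum $u|_{\partial B}$ since a mere viscosity supersolution need not be $\olk$-Lipschitz. Overcoming this requires the observation, to be verified by revisiting the proof of Theorem \ref{mongedir}, that \eqref{compcond} is used only to recover the boundary equality $w = g$ on $\partial\Om$, whereas the \emph{interior} Monge solution property is a structural consequence of the $\inf$-convolution with $\olk$ and is therefore available for arbitrary bounded continuous boundary data. A minor additional technical step is the localization $\olk = \ol$ near $x_0$ furnished by Lemma \ref{equivsigma} and Proposition \ref{mongequivdef}, which is needed to translate the contradiction hypothesis (phrased in terms of $\ol$) into a statement involving $\olk$.
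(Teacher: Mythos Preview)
Your proof is correct and follows the same overall scheme as the paper: assume the Monge supersolution inequality fails at $x_0$, build a viscosity solution $w$ on a small ball via the Hopf--Lax formula, and apply the comparison principle (Proposition~\ref{viscocomp}) to derive a contradiction. The only genuine difference is the choice of boundary datum. You take $g = u|_{\partial B}$, which obliges you to note---correctly---that the compatibility condition \eqref{compcond} in Theorem~\ref{mongedir} is used only to force $w = g$ on the boundary, not for the interior Monge solution property; you then need only the one-sided inequality $w \leq u$ on $\partial B$, which is immediate. The paper instead picks $g = \psi := -\ol(x_0,\cdot) + \delta r$ (after normalizing $u(x_0)=0$); this $\psi$ satisfies \eqref{compcond} automatically by the triangle inequality for $\ol$, the contradiction hypothesis translates into $u \geq \psi = w$ on $\partial B_r(x_0,\d)$, and one reads off $w(x_0) = \delta r > 0$ directly. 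The paper's choice allows Theorem~\ref{mongedir} (via Theorem~\ref{viscodir}) to be invoked as a black box, while your route requires re-entering that proof; conversely, your argument makes explicit that the inf-convolution structure alone yields the interior Monge solution property, independently of any boundary compatibility---an observation worth having.
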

\begin{proof}
Let $u$ be as in the statement. If by contradiction  $u$ is not a Monge supersolution to \eqref{eq:HJ}, there exists $x_0 \in \Om$, $r>0$ and $\delta>0$ such that
\begin{equation}\label{rdelta}
    u(x) - u(x_0) + d_{\sigma^\star}(x_0, x) \geq \delta \|x_0^{-1}\cdot x\|
\end{equation}
for any $x\in B_r(x_0,\d)$. Notice that, without loss of generality, we can assume that $u(x_0)=0$. Set $\psi(x) = - d_{\sigma^\star}(x_0, x) + \delta r$. Notice that, as $B_r(x_0,\d)\Subset\Om$, then $H\in \mathcal K(H,B_r(x_0,\d))$, being $H$ extended as in \eqref{numeraperdopo}. Moreover, notice that
\begin{equation*}
    \psi(x)-\psi(y)=\ol(x_0,y)-\ol (x_0,x)\leq\ol (x,y)
\end{equation*}
for any $x,y\in\partial B_r(x_0,\d)$, and so \eqref{compcond} is satisfied by $\psi$.
Therefore we know from Theorem \ref{viscodir} that, if we define $w: \overline{B_r(x_0,\d)}\longrightarrow\rr$ as in \eqref{hopflax} with $\Om=B_r(x_0,\d)$ and $g=\psi$, then $w\in C(\overline{B_r(x_0,\d)})$ and $w$ solves in the viscosity sense the Dirichlet problem
\begin{align*}
    H(x,Xw) &= 0 \quad \mbox{in } B_r(x_0,\d) \\
    w &= \psi \quad \mbox{on } \partial B_r(x_0,\d).
\end{align*}
Moreover, in view of \eqref{rdelta}, $u\geq \psi$ on $\partial B_r(x_0,\d)$. Therefore, recalling that $w\in C(\overline {B_r(x_0,\d)})\cap W^{1,\infty}_{X,\loc}(B_r(x_0,\d))$, we conclude from Proposition \ref{viscocomp} that $w(x_0)\leq u(x_0)=0$, but this is impossible, since $
    w(x_0) = \delta r > 0.
$
\end{proof}
\begin{proof}[Proof of Theorem \ref{introequiv}]
    It follows from Proposition \ref{firstimpl}, Proposition \ref{secondimplsub} and Proposition \ref{viscohenceMonge}.
\end{proof}
\section{Comparison Principle and Stability}\label{lastsect}
\subsection{Comparison Principle}
In this section we prove Theorem \ref{comparison}. This result, as customary, yields uniqueness for the Dirichlet problem associated to \eqref{eq:HJ}.
The proof of Theorem \ref{comparison}, strongly inspired by \cite{Davinitesi}, is based on the validity of the following two properties of Monge subsolutions.
\begin{proposition}\label{subislip}
    Let $u\in C(\Om)$. Assume that $u$ is a Monge subsolution to \eqref{eq:HJ}. Then $u\in W^{1,\infty}_{ X,\loc}(\Om)$.
\end{proposition}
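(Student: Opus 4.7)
The plan is to establish the integral inequality
\begin{equation*}
    u(\gamma(0)) - u(\gamma(T)) \leq \int_0^T \sigma^\star(\gamma(t), \dot\gamma(t))\, dt
\end{equation*}
for every $\gamma \in \mathcal{H}(\Omega)$ defined on $[0, T]$. Once this is available, passing to the infimum over sub-unit horizontal curves joining two arbitrary points $x, y \in \Omega$ (which exist by Chow--Rashevskii) yields $u(x) - u(y) \leq \ol(x, y)$; combining this with the estimate $\ol(x, y) \leq \alpha\, d_\Omega(x, y)$ from Lemma~\ref{prop:optimalpath}(ii) and exchanging the roles of $x$ and $y$ gives $|u(x) - u(y)| \leq \alpha\, d_\Omega(x, y)$. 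Since $W^{1,\infty}_{X,\loc}(\Omega) = \lip_{\loc}(\Omega, d_\Omega)$ as recalled in Section~\ref{calculus}, this proves the proposition (in fact with a global Lipschitz constant).

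To establish the integral inequality, fix such a $\gamma$ and introduce the auxiliary function
\begin{equation*}
    \phi(t) := u(\gamma(0)) - u(\gamma(t)) - \int_0^t \sigma^\star(\gamma(s), \dot\gamma(s))\, ds, \qquad t \in [0, T].
\end{equation*}
Since $u \circ \gamma$ is continuous and the integrand is bounded by $\alpha$ (by~\eqref{equistima} together with the sub-unit bound $|\dot\gamma(t)| \leq 1$), one has $\phi \in C([0, T])$ with $\phi(0) = 0$. By the classical fact that a continuous function on $[0, T]$ whose upper right Dini derivative $D^+\phi$ is non-positive everywhere on $[0, T)$ is non-increasing, it is enough to prove $D^+\phi(t_0) \leq 0$ for every $t_0 \in [0, T)$, so as to conclude $\phi(T) \leq \phi(0) = 0$.

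To this aim, fix $t_0 \in [0, T)$ and $\epsilon > 0$. The Monge \emph{sub}solution condition at $\gamma(t_0)$ reads $\liminf \geq 0$, and therefore provides $\delta > 0$ such that
\begin{equation*}
    u(x) - u(\gamma(t_0)) + \ol(\gamma(t_0), x) \geq -\epsilon\, d_\Omega(\gamma(t_0), x)
\end{equation*}
for every $x \in \Omega \setminus \{\gamma(t_0)\}$ with $d_\Omega(\gamma(t_0), x) < \delta$. By continuity of $\gamma$, for $h > 0$ small enough one can apply this with $x = \gamma(t_0 + h)$ (the case $\gamma(t_0 + h) = \gamma(t_0)$ being trivial since $\sigma^\star \geq 0$); using the sub-unit bound $d_\Omega(\gamma(t_0), \gamma(t_0 + h)) \leq h$ and the fact that $\gamma|_{[t_0, t_0 + h]}$ is an admissible competitor in the definition~\eqref{induceddd2} of $\ol(\gamma(t_0), \gamma(t_0 + h))$, a short rearrangement yields
\begin{equation*}
    \phi(t_0 + h) - \phi(t_0) = u(\gamma(t_0)) - u(\gamma(t_0 + h)) - \int_{t_0}^{t_0 + h} \sigma^\star(\gamma(s), \dot\gamma(s))\, ds \leq \epsilon h.
\end{equation*}
Dividing by $h$, letting $h \to 0^+$ and then $\epsilon \to 0$ delivers $D^+\phi(t_0) \leq 0$, as needed.

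The only delicate point is the step where the Monge subsolution hypothesis is converted into a pointwise estimate that can be tested along the curve $\gamma$. This is possible precisely because, for a subsolution, the required inequality is $\liminf \geq 0$ (as opposed to $\leq 0$ for supersolutions), so the unquantified liminf automatically upgrades to a genuine uniform lower bound in a full neighbourhood of each point; the rest of the argument is then a routine Dini-derivative computation.
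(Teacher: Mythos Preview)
Your proof is correct and takes a genuinely different route from the paper's. The paper argues indirectly: since $\ol\leq\alpha\,d_\Om$, a Monge subsolution to \eqref{eq:HJ} is automatically a Monge subsolution to the \emph{continuous} equation $|Xu|=\alpha$; Proposition~\ref{firstimpl} then upgrades this to a viscosity subsolution, and the Lipschitz conclusion is imported from the viscosity theory (Proposition~\ref{subislipvisc}, ultimately via \cite{soravia}). Your argument is instead fully self-contained: you test the $\liminf\geq 0$ condition along an arbitrary sub-unit curve and use a Dini-derivative monotonicity lemma to obtain the integral inequality $u(\gamma(0))-u(\gamma(T))\leq\int_0^T\sigma^\star(\gamma,\dot\gamma)\,dt$, hence $u(x)-u(y)\leq\ol(x,y)\leq\alpha\,d_\Om(x,y)$ globally on $\Om$. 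This avoids the detour through the continuous viscosity theory and, as a bonus, delivers the global estimate $u(x)-u(y)\leq\ol(x,y)$, which the paper only obtains later and only locally (Proposition~\ref{MonforLip}), there relying on the existence of local optimal curves and on the Lipschitz regularity you are currently proving. The paper's approach, on the other hand, makes the logical dependence on the continuous theory transparent and reuses machinery already in place.
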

\begin{proof}
    Assume that $u\in C(\Om)$ is a Monge subsolution to \eqref{eq:HJ}. Then 
    \begin{equation*}
        \liminf_{x\to x_0} \frac{u(x)-u(x_0)+\alpha d_{\Om}(x_0,x)}{\|x_0^{-1}\cdot x\|}\geq\liminf_{x\to x_0}\frac{u(x)-u(x_0)+\ol (x_0,x)}{\|x_0^{-1}\cdot x\|}\geq 0
    \end{equation*}
    for any $x_0\in\Om$. Let $K(x,\xi):=|\xi|-\alpha$. Then $\sigma^\star_K(x,\xi)=\alpha |\xi|$ and $d_{\sigma^\star_K}(x,y)=\alpha d_{\Om}(x,y)$. This implies that $u$ is a Monge subsolution to 
    \begin{equation}\label{lipeq2}
        |Xu|\leq\alpha
    \end{equation}
    on $\Om$. Since $K$ is continuous, then $u$ is also a viscosity subsolution to \eqref{lipeq2}, in view of Proposition \ref{firstimpl}. The conclusion follows as in the proof of Proposition \ref{subislipvisc}.
\end{proof}
 \begin{proposition}\label{MonforLip}
If $u$ is a Monge subsolution to \eqref{eq:HJ} in $\Om$, then for any $x_0\in\Om$ there exists $r>0$ such that
\begin{equation*}
u(x)-u(y) \leq d_{\sigma^\star}(x,y) 
\end{equation*}
for any $x, y\in \overline{B_r(x_0,d_\Om)}$. 
\end{proposition}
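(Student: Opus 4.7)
The natural approach is to mirror the proof of Proposition \ref{MonforLipv}, adapted to the Monge setting. By Proposition \ref{subislip}, we already have $u\in W^{1,\infty}_{X,\loc}(\Om)$, and Theorem \ref{Pansu} then gives Pansu differentiability of $u$ at almost every point. For the given $x_0\in\Om$, I first choose $r>0$ so small that $\overline{B_r(x_0,d_\Om)}\Subset\Om$ and, by Proposition \ref{locgeo}, every pair $x,y\in\overline{B_r(x_0,d_\Om)}$ is connected by an optimal horizontal curve $\gamma:[0,1]\to\Om$ realising $\ol(x,y)$.

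The crux of the argument is the pointwise subgradient inclusion
\begin{equation*}
\partial_{X,N}u(z)\subseteq Z(z)\qquad\text{for every }z\in\Om,
\end{equation*}
which is the Monge counterpart of the inclusion secured in the viscosity setting via Proposition \ref{equivsol} and \cite[Lemma 2.7]{PVW}. Once at our disposal, the thesis follows exactly as in the proof of Proposition \ref{MonforLipv}: Proposition \ref{w2} provides, for a.e. $t\in(0,1)$, a selection $\vartheta(t)\in\partial_{X,N}u(\gamma(t))\subseteq Z(\gamma(t))$ with $\frac{d(u\circ\gamma)}{dt}(t)=\langle\vartheta(t),\dot\gamma(t)\rangle$, and since $\sigma^\star$ is the support function of $Z$ we obtain $\langle\vartheta(t),\dot\gamma(t)\rangle\geq-\sigma^\star(\gamma(t),\dot\gamma(t))$; integrating over $[0,1]$ together with the optimality of $\gamma$ yields $u(x)-u(y)\leq\ol(x,y)$ for any $x,y\in\overline{B_r(x_0,d_\Om)}$.

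To establish the subgradient inclusion, I argue by contradiction at a generic Pansu differentiability point $z_0$ of $u$. Assuming $Xu(z_0)\notin Z(z_0)$, the Hahn--Banach theorem separates $Xu(z_0)$ from the closed convex set $Z(z_0)$, producing $\xi\in\mathbb{S}^{m-1}$ with $\langle-Xu(z_0),\xi\rangle>\sigma^\star(z_0,\xi)$. I then test the Monge subsolution inequality at $z_0$ along a near-optimal sub-unit sequence of horizontal curves $\gamma_n$ emanating from $z_0$, unpacking it by means of the first-layer identity \eqref{tfcexplanation} and the convexity of $\sigma^\star(z_0,\cdot)$ via Jensen's inequality, in the spirit of Proposition \ref{stimapermongeimplicavisc}, to extract $\langle Xu(z_0),\xi\rangle+\sigma^\star(z_0,\xi)\leq 0$, the desired contradiction. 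The upgrade from Pansu points to every $z\in\Om$ relies on the definition of $\partial_{X,N}u$ as a closed convex hull together with the closedness-convexity of $Z(z)$ from $(\mathrm{H}_2)$.

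The main obstacle lies precisely in the Jensen step: since $H$ is only Borel measurable, $\sigma^\star(\cdot,v)$ lacks semicontinuity in its first argument, so the replacement of $\sigma^\star(\gamma_n(s),\dot\gamma_n(s))$ by $\sigma^\star(z_0,\dot\gamma_n(s))$ is not automatic as in the continuous case. Overcoming this difficulty will require exploiting the uniform bounds \eqref{equistima} granted by $(\mathrm{H}_3)$, together with a delicate localisation---possibly via an appropriate extension $K\in\mathcal K(H,\Om)$ as in \eqref{kzero} and Proposition \ref{mongequivdef}---in order to regain enough regularity of $\sigma^\star$ along small sub-unit curves to close the argument.
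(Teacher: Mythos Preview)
Your central intermediate claim---the inclusion $\partial_{X,N}u(z)\subseteq Z(z)$ for \emph{every} $z\in\Om$---is false for merely Borel Hamiltonians, and this is not a technicality that a clever localisation or an extension $K\in\mathcal K(H,\Om)$ can repair. Take $\G=\rr$, $H(x,p)=|p|-n(x)$ with $n(0)=\tfrac12$ and $n(x)=1$ for $x\neq 0$, and $u(x)=x$. Since $n=1$ a.e., $\ol(x,y)=|x-y|$, and one checks directly that $u$ is a Monge subsolution at every point. Yet $u$ is differentiable at $0$ with $u'(0)=1\notin Z(0)=[-\tfrac12,\tfrac12]$, so your contradiction argument at Pansu points already fails there; and even granting the inclusion a.e., your ``upgrade to every $z$'' breaks down because limits of $Xu(y_n)\in Z(y_n)$ need not land in $Z(z)$ when $Z(\cdot)$ is discontinuous---closedness and convexity of each $Z(z)$ say nothing about set-valued upper semicontinuity. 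The Jensen step you flag is a symptom of this same issue: for discontinuous $H$ there is simply no way to pass from $\sigma^\star(\gamma_n(s),\dot\gamma_n(s))$ to $\sigma^\star(z_0,\dot\gamma_n(s))$, and the uniform bounds from $(\mathrm H_3)$ only give the coarse factor $\alpha$, not the sharp $\sigma^\star(z_0,\xi)$ needed for the separation argument.

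The paper bypasses pointwise gradient information entirely. It fixes an optimal curve $\gamma:[0,T]\to\Om$ for $\ol(x,y)$ (from Proposition~\ref{locgeo}), sets $g(t)=u(\gamma(t))$ and $f(t)=\ol(x,\gamma(t))$, and shows $(f+g)'(t_0)\geq 0$ for a.e.\ $t_0$ \emph{directly from the Monge subsolution inequality at $\gamma(t_0)$}: optimality of $\gamma$ gives $\ol(x,\gamma(t_0+h))-\ol(x,\gamma(t_0))=\ol(\gamma(t_0),\gamma(t_0+h))$ via \eqref{onlitool}, so the incremental quotient of $f+g$ is exactly the quantity whose $\liminf$ is $\geq 0$ by definition of Monge subsolution (times a nonnegative factor $\|\gamma(t_0)^{-1}\cdot\gamma(t_0+h)\|/h$). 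Integrating over $[0,T]$ yields $u(y)+\ol(x,y)\geq u(x)$. No $\sigma^\star$, no $Z(z)$, no Jensen---the discontinuity of $H$ is never seen.
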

\begin{proof}
 Let $r>0$ be as in Proposition \ref{locgeo}. Then in particular $B_r(x_0,d_\Om)\Subset\Om$. Moreover, since $u$ and $\ol$ are continuous on $\overline{B_r(x_0,d_\Om)}$, it suffices to consider points in $B_r(x_0,d_\Om)$. Let $x,y\in B_r(x_0,d_\Om)$. If $x=y$ the thesis is trivial. If instead $x\neq y$, in view of Proposition \ref{locgeo} there exists a sub-unit curve $\gamma :[0,T]\longrightarrow\Om$ such that $\ga(0)=x$, $\gamma (T)=y$ for some $T>0$ and $\gamma$ is optimal for $\ol (x,y)$ in the sense of Section \ref{lentandgeodistancesection}. In particular, \eqref{onlitool} holds for $\gamma$.
Set $f(t) \coloneqq d_{\sigma^\star}(x_0, \gamma(t))$ and $g(t):= u(\gamma(t))$. Therefore Proposition \ref{subislip} implies that both $f,g\in W^{1,\infty}_{\loc}(0,T)$.
We infer that the derivative of $f+g$ exists almost everywhere on $(0,T)$. To conclude, it suffices to show that it is non-negative. To this aim, recalling that $u$ is a Monge subsolution to \eqref{eq:HJ} and by \eqref{onlitool}, we observe that 
\begin{equation*}
  \begin{split}
      \frac{d}{dt}(f+g)\Big|_{t=t_0}&=\lim_{h\to 0^+}\frac{g(t_0+h)-g(t_0)+f(t_0+h)-f(t_0)}{h}\\
      &=\lim_{h\to 0^+}\frac{u(\ga(t_0+h))-u(\ga(t_0))+\ol(x_0,\ga(t_0+h))-\ol(x_0,\ga(t_0))}{\|\ga(t_0)^{-1}\cdot\ga(t_0+h)\|}\cdot\frac{\|\ga(t_0)^{-1}\cdot\ga(t_0+h)\|}{h}\\
      & = \liminf_{h\to 0^+}\frac{u(\ga(t_0+h))-u(\ga(t_0))+\ol(\ga(t_0),\ga(t_0+h))}{\|\ga(t_0)^{-1}\cdot\ga(t_0+h)\|}\cdot\frac{\|\ga(t_0)^{-1}\cdot\ga(t_0+h)\|}{h}\\
      &\geq 0
  \end{split}
\end{equation*}
for almost every $t_0\in (0,T)$.  Finally, integrating $\frac{d}{dt}(f+g)$ in $[0,T]$ we get the result.
\end{proof}
\begin{lemma}\label{hk}
  Let $\Om\subseteq\G$ be a bounded domain. Let $H, K:\Om\times\rr^m\longrightarrow\rr$ satisfy {\rm (H)}, and assume that there exists $\delta\in(0,1)$ such 
    \begin{equation}\label{inclu}
        Z_K(x)\subseteq \delta Z_H(x)
    \end{equation}
    for any $x\in\Om$. Assume that $u\in C(\overline\Om)$ is a Monge subsolution to $K(x,Xu)=0$ and that $v\in C(\overline\Om)$ is a Monge supersolution to $H(x,Xv)=0$. If $u\leq v$ on $\partial\Om$, then $u\leq v$ on $\overline\Om$.
\end{lemma}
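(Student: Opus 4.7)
The plan is to argue by contradiction, using the strict inclusion $Z_K(x)\subseteq\delta Z_H(x)$ with $\delta<1$ to produce a quantitative gap at an interior maximizer of $u-v$.

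As a preparation, I would first transform the zero-set inclusion into a metric inequality. From the definition \eqref{supportfunct} of the dual norm one immediately obtains $\sigma_K^\star(x,p)\leq\delta\,\sigma_H^\star(x,p)$ for every $(x,p)\in\Om\times\rr^m$; taking the infimum over horizontal curves in \eqref{induceddd2} propagates this to the global estimate
\begin{equation*}
\olk(x,y)\leq\delta\, d_{\sigma_H^\star}(x,y)\qquad\text{for every }x,y\in\Om.
\end{equation*}

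I would next suppose by contradiction that $M:=\max_{\overline\Om}(u-v)>0$. Since $u\leq v$ on $\partial\Om$, any maximizer $x_0$ must lie in $\Om$. Applying Proposition \ref{MonforLip} to the Monge subsolution $u$ of $K(x,Xu)=0$ furnishes a radius $r>0$ such that
\begin{equation*}
u(x_0)-u(x)\leq \olk(x_0,x)\qquad\text{for every }x\in\overline{B_r(x_0,d_\Om)},
\end{equation*}
while the Monge supersolution condition for $v$ at $x_0$ allows one to select a sequence $x_n\to x_0$ inside $B_r(x_0,d_\Om)\setminus\{x_0\}$ with
\begin{equation*}
\frac{v(x_n)-v(x_0)+d_{\sigma_H^\star}(x_0,x_n)}{d_\Om(x_0,x_n)}\leq\frac{1}{n}.
\end{equation*}

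The conclusion would then be a short chain of estimates. Maximality of $x_0$ gives $v(x_n)-v(x_0)\geq u(x_n)-u(x_0)\geq -\olk(x_0,x_n)\geq -\delta\, d_{\sigma_H^\star}(x_0,x_n)$, whereas Lemma \ref{prop:optimalpath}(ii) yields $d_{\sigma_H^\star}(x_0,x_n)\geq \alpha^{-1}d_\Om(x_0,x_n)$; putting these together one obtains
\begin{equation*}
\frac{1-\delta}{\alpha}\leq\frac{v(x_n)-v(x_0)+d_{\sigma_H^\star}(x_0,x_n)}{d_\Om(x_0,x_n)}\leq\frac{1}{n},
\end{equation*}
which is impossible for $n$ large since $\delta<1$. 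I do not foresee any serious obstacle: the only conceptual point worth emphasising is that the subsolution property of $u$ has to be used through the global Lipschitz-type inequality of Proposition \ref{MonforLip}, rather than through the infinitesimal Monge definition, so that the $\delta$-gap can be transferred from $\olk$ to $d_{\sigma_H^\star}$ in a single move.
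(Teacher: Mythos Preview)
Your argument is correct and is genuinely more elementary than the paper's. The paper proceeds by a doubling-of-variables technique: it extends $H$ and $K$ to Hamiltonians $\tilde H,\tilde K$ on all of $\G\times\rr^m$, so that the associated optical lengths become geodesic distances, then maximizes the penalized functional $f_\varepsilon(x,y)=u(x)-v(y)-\varepsilon^{-1}\oth(x,y)^2$ over $\overline\Om\times\overline\Om$, shows that the maximizer lies in the interior, and finally studies the behaviour of this functional along a $\oth$-geodesic joining the two components of the maximizer to produce the contradiction with the supersolution property of $v$.

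Your route bypasses all of this. By passing the inclusion $Z_K\subseteq\delta Z_H$ to the support functions you obtain $\olk\leq\delta\,d_{\sigma_H^\star}$ directly on $\Om$; then, working at a single interior maximizer $x_0$ of $u-v$, you combine the local Lipschitz-type estimate from Proposition~\ref{MonforLip} (applied to the $K$-subsolution $u$) with the Monge supersolution inequality for $v$, and the gap $1-\delta$ falls out immediately. This avoids the extensions $\tilde H,\tilde K$, the doubling of variables, and the explicit use of geodesics; the locally geodesic structure is still present, but only implicitly through the proof of Proposition~\ref{MonforLip}. The paper's approach is closer in spirit to classical viscosity comparison arguments and may generalize more readily to settings where a strict-subsolution estimate like Proposition~\ref{MonforLip} is unavailable, but for the present statement your proof is both shorter and conceptually cleaner.
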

\begin{proof}
Assume by contradiction that there exists $x_0\in\Om$ such that $u(x_0)>v(x_0)$.
    Let us define $\tilde H,\tilde K$ by
    \begin{equation*}
        \tilde H(x,\xi):=
        \begin{cases}
            H(x,\xi)\qquad\text{if }(x,\xi)\in\Om\times\rr^m\\
            |\xi|-\alpha\qquad\text{otherwise}
        \end{cases}
    \end{equation*}
    and
      \begin{equation*}
        \tilde K(x,\xi):=
        \begin{cases}
            K(x,\xi)\qquad\text{if }(x,\xi)\in\Om\times\rr^m\\
            |\xi|-\frac{1}{\alpha}\qquad\text{otherwise}.
        \end{cases}
    \end{equation*}
 Then $\tilde H\in\mathcal{K}(H,\Om)$ and $\tilde K\in\mathcal K(K,\Om)$. Notice that, since $\tilde H,\tilde K$ are defined on the whole $\mathbb G\times\rr^m$, then $\oth,\otk$ are geodesic distances in the sense of Section \ref{lentandgeodistancesection} (cf. \cite{esspinpas}). Moreover, \eqref{inclu} and the definition of $\tilde H,\tilde K$ imply that
 \begin{equation}\label{includue}
        Z_{\tilde K}(x)\subseteq \delta Z_{\tilde H}(x)
    \end{equation}
    holds for any $x\in\Om$. We claim that there exists $\varepsilon>0$ such that 
    \begin{equation*}
        f_\varepsilon(x,y):=u(x)-v(y)-\frac{\oth(x,y)^2}{\varepsilon}
    \end{equation*}
    achieves its maximum over $\overline\Om\times\overline\Om$ on $\Om\times\Om$. If not, then for any $h\in\mathbb N_+$ there exists $(x_h,y_h)\in(\overline\Om\times\overline\Om)\setminus\Om\times\Om$ which realizes the maximum for $f_{\frac{1}{h}}$. Up to a subsequence, we can assume that $x_h\to\bar x$ and that $y_h\to\bar y$. Then either $\bar x\in\partial\Om$ or $\bar y\in\partial\Om$. Let us assume for definiteness that the first
case occurs. Notice that 
    \begin{equation}\label{eqau}
        0<f_{\frac{1}{h}}(x_0,x_0)\leq  u(x_h)- v(y_h)-h\oth(x_h,y_h)^2.
    \end{equation}
    Therefore $h\oth(x_h,y_h)^2$ is bounded, and hence $d_\G (x_h,y_h)\to 0$. This implies that $\bar x=\bar y$. Hence, noticing that $f_\frac{1}{h}(x_0,x_0)$ does not depend on $h$, \eqref{eqau} implies that $u(\bar x)>v(\bar x)$, which is impossible since $\bar x\in\partial\Om$. Let then $(\tilde x,\tilde y)\in\Om\times\Om$ be a maximum point for $f_\varepsilon $. Since $\oth$ is geodesic, there exists a sub-unit curve $\ga:[0,T]\longrightarrow\G$  such that 
    $\ga(0)=\tilde x,$ $\ga(T)=\tilde y$ and, recalling \eqref{onlitool}, 
\begin{equation}\label{onlitoollemma72}
    \oth(\tilde x,\tilde y)=\oth(\tilde x,\gamma(t))+\oth(\gamma(t),\tilde y)
\end{equation}
for any $t\in [0,T]$. Set 
    \begin{equation*}
        h(t):=\frac{1}{\varepsilon}(\oth(\tilde x,\tilde y)+\oth(\ga(t),\tilde y)).
    \end{equation*}
    We claim that $h(0)\leq \delta$. If $\tilde x=\tilde y$, the thesis is trivial. So assume $\tilde x\neq\tilde y$. Notice that $f_\varepsilon(\tilde x,\tilde y)\geq f_\varepsilon(\gamma(t),\tilde y)$ for any $t$ small enough to ensure that $\gamma(t)\in\Om$, and so 
    \begin{equation*}
        u(\tilde x)-u(\gamma(t))\geq  \frac{1}{\varepsilon}\left(\oth(\tilde x,\tilde y)^2-\oth(\gamma(t),\tilde y)^2\right)=h(t)(\oth(\tilde x,\tilde y)-\oth(\gamma(t),\tilde y))= h(t)\oth(\tilde x,\ga(t))
    \end{equation*}
    for any $t$ small enough, the last equality following in view of \eqref{onlitoollemma72}.
    Since $u$ is a Monge subsolution to $K(x,Xu)=0$, we can apply Proposition \ref{MonforLip} to infer that
    \begin{equation*}
        \otk(\tilde x,\ga(t))\geq h(t)\oth(\tilde x,\ga(t))
    \end{equation*}
    for any $t>0$ small enough. Moreover \eqref{includue} implies that $\otk(\tilde  x,\ga(t))\leq\delta\oth(\tilde x,\ga(t))$ for any $t\in [0,T]$. We conclude that 
    \begin{equation*}
        \delta\oth(\tilde x,\ga(t))\geq h(t)\oth(\tilde x,\ga(t))
    \end{equation*}
    for any $t>0$ small enough, which yields the claim. Noticing that $f_\varepsilon(\tilde x,\tilde y)\geq f_\varepsilon (\tilde x,y)$ for any $y\in\Om$, then
    \begin{equation*}
    \begin{split}
            v(\tilde y)-v(y)&=f_\varepsilon(\tilde x,y)-f_\varepsilon(\tilde x,\tilde y)+\frac{1}{\varepsilon}(\oth(\tilde x,y)^2-\oth(\tilde x,\tilde y)^2)\\
            & \leq \frac{1}{\varepsilon}(\oth(\tilde x,y)^2-\oth(\tilde x,\tilde y)^2)\\
            &\leq \frac{1}{\varepsilon}\left(\oth(\tilde x,y)+\oth(\tilde x,\tilde y)\right)\oth(\tilde y,y)\\
            &= \left(h(0)+\frac{1}{\varepsilon}\left(\oth(\tilde x,y)-\oth (\tilde x,\tilde y)\right)\right)\oth(\tilde y,y) \\
            &\leq  \left(\delta+\frac{\alpha}{\eps} d_\G( y,\tilde y)\right)\oth(\tilde y,y)\\
            &<\frac{1+\delta}{2}\oth(\tilde y,y)
    \end{split}
    \end{equation*}
    for any $y$ in a neighborhood of $\tilde y$, where the semi-last inequality follows from $h(0)\leq\delta$  and from Lemma \ref{prop:optimalpath}, while the last inequality follows provided that $y$ is sufficiently close to $\tilde y$ to ensure that
    \begin{equation*}
        d_\G(y,\tilde y)<\frac{\eps(1-\delta)}{2\alpha}.
    \end{equation*}
    Therefore we can conclude that 
    \begin{equation*}
        v(y)-v(\tilde y)+\oth(\tilde y,y)\geq \frac{1-\delta}{2}\oth(\tilde y,y)\geq\frac{1-\delta}{2\alpha}d_\G(\tilde y,y),
    \end{equation*}
    which is a contradiction since $v$ is a Monge supersolution to $H(x,Xv)=0$.
\end{proof}
\begin{proof}[Proof of Theorem \ref{comparison}]
    We follow  \cite[Theorem 5.8]{Davinitesi}. Up to replacing  $u$ with $u+c$ and $v$ with $v+c$ for a positive constant $c$ large enough, we may assume $u$ and $v$ positive in $\bar{\Omega}$. Let $\delta \in (0,1)$ and $H_{\delta} (x,\xi)=H(x,\frac{\xi}{\delta})$. Notice that $Z_{H_{\delta}}(x)= \delta Z_H (x)$ for each $x$ in $\Omega$, whence $d_{\sigma_{ H_\delta}^\star}=\delta d_{\sigma_H^\star}$. This implies that $\delta u$ is a Monge subsolution to $H_{\delta}(x, Xw)=0$. Moreover, $\delta u \le u \le v$ in $\partial \Omega$, so we can apply  Lemma \ref{hk} with $K=H_{\delta}$ to obtain that $\delta u \le v$ in $\Omega$. By letting $\delta \to 1^-$ we get the desired result.
\end{proof}
\subsection{Stability} Finally, following \cite{monge}, we prove Theorem \ref{stab}, which is the analogue of \cite[Theorem 6.4]{monge}.

\begin{proof}[Proof of Theorem \ref{stab}]
 Fix $x_0\in\Om$ and let $r>0$ be such that $B_r(x_0,d_\Om)\Subset\Om$ and Proposition \ref{locgeo} holds. Then $H_n\in \mathcal K(H_n,B_r(x_0,d_\Om))$ for any $n\in\mathbb N$ and $H_\infty\in \mathcal K(H_\infty,B_r(x_0,d_\Om))$, \ being $H_n$ and $H_\infty$ extended as in \eqref{numeraperdopo}. Moreover, in view of Proposition \ref{MonforLip}, $
     u_n(x)-u_n(y)\leq d_{\sigma^\star_{H_n}}(x,y)
$
 for any $n\in\mathbb N$ and for any $x,y\in\partial B_r(x_0,d_\Om)$. Hence, in view of Theorem \ref{mongedir},
 \begin{equation*}
     u_n(x)=\inf_{y\in\partial B_r(x_0,d_\Om)}\{d_{\sigma^\star_{H_n}}(x,y)+u_n(y)\}
 \end{equation*}
 for any $x\in\overline{B_r(x_0,d_\Om)}$ and any $n\in\mathbb N$. By the local uniform convergence assumptions we infer that 
  \begin{equation*}
     u_\infty(x)=\inf_{y\in\partial B_r(x_0,d_\Om)}\{d_{\sigma^\star_{H_\infty}}(x,y)+u_\infty(y)\}
 \end{equation*}
 for any $x\in\overline{B_r(x_0,d_\Om)}$, and so we conclude thanks to Theorem \ref{mongedir}.
\end{proof}
\begin{remark}
    The convergence condition in the hypotheses of Theorem \ref{stab} is based on the optical length functions rather than on the Hamiltonians. Arguing as in \cite{monge}, one can easily find sufficient conditions on the Hamiltonians in order to guarantee the local uniform convergence of the optical length functions.
\end{remark}

\bibliographystyle{abbrv} 
\bibliography{sub-finsler}

\begin{thebibliography}{10}

\bibitem{AF14}
L.~Ambrosio and J.~Feng.
\newblock On a class of first order {H}amilton--{J}acobi equations in metric
  spaces.
\newblock {\em J. Differential Equations}, 256(7):2194--2245, 2014.

\bibitem{ags}
L.~Ambrosio, N.~Gigli, and G.~Savar\'{e}.
\newblock {\em Gradient flows in metric spaces and in the space of probability
  measures}.
\newblock Lectures in Mathematics ETH Z\"{u}rich. Birkh\"{a}user Verlag, Basel,
  second edition, 2008.

\bibitem{ambrosio}
L.~Ambrosio and P.~Tilli.
\newblock {\em Topics on analysis in metric spaces}, volume~25 of {\em Oxford
  Lecture Series in Mathematics and its Applications}.
\newblock Oxford University Press, Oxford, 2004.

\bibitem{MR3168637}
Z.~M. Balogh, A.~Calogero, and R.~Pini.
\newblock The {H}opf-{L}ax formula in {C}arnot groups: a control theoretic
  approach.
\newblock {\em Calc. Var. Partial Differential Equations}, 49(3-4):1379--1414,
  2014.

\bibitem{bardicapuzzo}
M.~Bardi and I.~Capuzzo-Dolcetta.
\newblock {\em Optimal control and viscosity solutions of
  {H}amilton--{J}acobi-{B}ellman equations}.
\newblock Systems \& Control: Foundations \& Applications. Birkh\"{a}user
  Boston, Inc., Boston, MA, 1997.

\bibitem{bcess}
M.~Bardi, M.~G. Crandall, L.~C. Evans, H.~M. Soner, and P.~E. Souganidis.
\newblock {\em Viscosity solutions and applications}, volume 1660 of {\em
  Lecture Notes in Mathematics}.
\newblock Springer-Verlag, Berlin; Centro Internazionale Matematico Estivo
  (C.I.M.E.), Florence, 1997.

\bibitem{BARDI}
M.~Bardi and L.~C. Evans.
\newblock On {H}opf's formulas for solutions of {H}amilton--{J}acobi equations.
\newblock {\em Nonlinear Anal.}, 8(11):1373--1381, 1984.

\bibitem{BG19}
M.~Bardi and A.~Goffi.
\newblock New strong maximum and comparison principles for fully nonlinear
  degenerate elliptic {PDE}s.
\newblock {\em Calc. Var. Partial Differential Equations}, 58(6):Paper No. 184,
  20, 2019.

\bibitem{Barles94}
G.~Barles.
\newblock {\em Solutions de viscosit\'{e} des \'{e}quations de
  {H}amilton-{J}acobi}, volume~17 of {\em Math\'{e}matiques \& Applications
  (Berlin) [Mathematics \& Applications]}.
\newblock Springer-Verlag, Paris, 1994.

\bibitem{barles}
G.~Barles.
\newblock {\em An Introduction to the Theory of Viscosity Solutions for
  First-Order Hamilton–Jacobi Equations and Applications. In: Hamilton-Jacobi
  Equations: Approximations, Numerical Analysis and Applications}, volume 2074.
\newblock Springer, Berlin, Heidelberg, lecture notes in mathematics edition,
  2013.

\bibitem{bp90}
G.~Barles and B.~Perthame.
\newblock Comparison principle for {D}irichlet-type {H}amilton--{J}acobi
  equations and singular perturbations of degenerated elliptic equations.
\newblock {\em Appl. Math. Optim.}, 21(1):21--44, 1990.

\bibitem{bj90}
E.~N. Barron and R.~Jensen.
\newblock Semicontinuous viscosity solutions for {H}amilton--{J}acobi equations
  with convex {H}amiltonians.
\newblock {\em Comm. Partial Differential Equations}, 15(12):1713--1742, 1990.

\bibitem{bdm09}
T.~Bieske, F.~Dragoni, and J.~Manfredi.
\newblock The {C}arnot--{C}arathéodory distance and the infinite laplacian
  equation.
\newblock {\em Journal of Geometric Analysis}, 19(4):737--754, 2009.

\bibitem{Biro}
M.~Biroli.
\newblock Subelliptic {H}amilton--{J}acobi equations: the coercive evolution
  case.
\newblock {\em Appl. Anal.}, 92(1):1--14, 2013.

\bibitem{BonLanUgu}
A.~Bonfiglioli, E.~Lanconelli, and F.~Uguzzoni.
\newblock {\em Stratified {L}ie groups and potential theory for their
  sub-{L}aplacians}.
\newblock Springer Monographs in Mathematics. Springer, Berlin, 2007.

\bibitem{monge}
A.~Briani and A.~Davini.
\newblock Monge solutions for discontinuous {H}amiltonians.
\newblock {\em ESAIM Control Optim. Calc. Var.}, 11(2):229--251, 2005.

\bibitem{MR1835418}
D.~Burago, Y.~Burago, and S.~Ivanov.
\newblock {\em A course in metric geometry}, volume~33 of {\em Graduate Studies
  in Mathematics}.
\newblock American Mathematical Society, Providence, RI, 2001.

\bibitem{cs03}
F.~Camilli and A.~Siconolfi.
\newblock Hamilton--{J}acobi equations with measurable dependence on the state
  variable.
\newblock {\em Adv. Differential Equations}, 8(6):733--768, 2003.

\bibitem{sine}
P.~Cannarsa and C.~Sinestrari.
\newblock {\em Semiconcave functions, {H}amilton--{J}acobi equations, and
  optimal control}, volume~58 of {\em Progress in Nonlinear Differential
  Equations and their Applications}.
\newblock Birkh\"{a}user Boston, Inc., Boston, MA, 2004.

\bibitem{CDPTbook}
L.~Capogna, D.~Danielli, S.~D. Pauls, and J.~T. Tyson.
\newblock {\em An introduction to the {H}eisenberg group and the
  sub-{R}iemannian isoperimetric problem}, volume 259 of {\em Progress in
  Mathematics}.
\newblock Birkh\"{a}user Verlag, Basel, 2007.

\bibitem{CGPV}
L.~Capogna, G.~Giovannardi, A.~Pinamonti, and S.~Verzellesi.
\newblock The asymptotic p-{P}oisson equation as $p\to\infty$ in
  {C}arnot-carathéodory spaces.
\newblock {\em Math. Ann.}, 2024.
\newblock \url{https://doi.org/10.1007/s00208-024-02805-z}.

\bibitem{Chow}
W.-L. Chow.
\newblock {{\"U}ber {S}ysteme von linearen partiellen {D}ifferentialgleichungen
  erster {O}rdnung}.
\newblock {\em Math. Ann.}, 117:98--105, 1939.

\bibitem{clarke}
F.~H. Clarke.
\newblock {\em Methods of dynamic and nonsmooth optimization}, volume~57 of
  {\em CBMS-NSF Regional Conference Series in Applied Mathematics}.
\newblock Society for Industrial and Applied Mathematics (SIAM), Philadelphia,
  PA, 1989.

\bibitem{CraEvaLi}
M.~G. Crandall, L.~C. Evans, and P.-L. Lions.
\newblock Some properties of viscosity solutions of {H}amilton--{J}acobi
  equations.
\newblock {\em Trans. Amer. Math. Soc.}, 282(2):487--502, 1984.

\bibitem{users}
M.~G. Crandall, H.~Ishii, and P.-L. Lions.
\newblock User's guide to viscosity solutions of second order partial
  differential equations.
\newblock {\em Bull. Amer. Math. Soc. (N.S.)}, 27(1):1--67, 1992.

\bibitem{CL}
M.~G. Crandall and P.-L. Lions.
\newblock Viscosity solutions of {H}amilton--{J}acobi equations.
\newblock {\em Trans. Amer. Math. Soc.}, 277(1):1--42, 1983.

\bibitem{Davinitesi}
A.~Davini.
\newblock Finsler metrics in optimization problems and {H}amilton--{J}acobi
  equations, 2004.
\newblock cvgmt preprint.

\bibitem{ds06}
A.~Davini and A.~Siconolfi.
\newblock A generalized dynamical approach to the large time behavior of
  solutions of {H}amilton-{J}acobi equations.
\newblock {\em SIAM J. Math. Anal.}, 38(2):478--502, 2006.

\bibitem{D2007}
F.~Dragoni.
\newblock Metric {H}opf--{L}ax formula with semicontinuous data.
\newblock {\em Discrete Contin. Dyn. Syst.}, 17(4):713--729, 2007.

\bibitem{esspinpas}
F.~Essebei and E.~Pasqualetto.
\newblock Variational problems concerning sub-{F}insler metrics in {C}arnot
  groups.
\newblock {\em ESAIM Control Optim. Calc. Var.}, 29:Paper No. 21, 31, 2023.

\bibitem{FaSi05}
A.~Fathi and A.~Siconolfi.
\newblock P{DE} aspects of {A}ubry-{M}ather theory for quasiconvex
  {H}amiltonians.
\newblock {\em Calc. Var. Partial Differential Equations}, 22(2):185--228,
  2005.

\bibitem{FS}
G.~B. Folland and E.~M. Stein.
\newblock {\em Hardy spaces on homogeneous groups.}
\newblock Princeton University Press, Princeton, N.J.; University of Tokyo
  Press, Tokyo,,, 1982.

\bibitem{GS15}
W.~Gangbo and A.~\'{S}wipolhk{e}ch.
\newblock Metric viscosity solutions of {H}amilton--{J}acobi equations
  depending on local slopes.
\newblock {\em Calc. Var. Partial Differential Equations}, 54(1):1183--1218,
  2015.

\bibitem{GN}
N.~Garofalo and D.-M. Nhieu.
\newblock Lipschitz continuity, global smooth approximations and extension
  theorems for {S}obolev functions in {C}arnot-{C}arath\'{e}odory spaces.
\newblock {\em J. Anal. Math.}, 74:67--97, 1998.

\bibitem{GHN15}
Y.~Giga, N.~Hamamuki, and A.~Nakayasu.
\newblock Eikonal equations in metric spaces.
\newblock {\em Trans. Amer. Math. Soc.}, 367(1):49--66, 2015.

\bibitem{gromov}
M.~Gromov.
\newblock Carnot-{C}arath\'{e}odory spaces seen from within.
\newblock In {\em Sub-{R}iemannian geometry}, volume 144 of {\em Progr. Math.},
  pages 79--323. Birkh\"{a}user, Basel, 1996.

\bibitem{i85}
H.~Ishii.
\newblock {H}amilton--{J}acobi equations with discontinuous hamiltonians on
  arbitrary open sets.
\newblock {\em Bull. Facul. Sci. and Eng., Chuo Univ.}, 28:33--77, 1985.

\bibitem{lions}
P.-L. Lions.
\newblock {\em Generalized solutions of {H}amilton--{J}acobi equations.}
\newblock Pitman (Advanced Publishing Program), Boston, Mass.-London,,, 1982.

\bibitem{LSZ21}
Q.~Liu, N.~Shanmugalingam, and X.~Zhou.
\newblock Equivalence of solutions of eikonal equation in metric spaces.
\newblock {\em J. Differential Equations}, 272:979--1014, 2021.

\bibitem{LSX}
Q.~{Liu}, N.~{Shanmugalingam}, and X.~{Zhou}.
\newblock {Discontinuous eikonal equations in metric measure spaces}.
\newblock {\em arXiv e-prints}, page arXiv:2308.06872, Aug. 2023.

\bibitem{StrofMan}
J.~J. Manfredi and B.~Stroffolini.
\newblock A version of the {H}opf--{L}ax formula in the {H}eisenberg group.
\newblock {\em Comm. Partial Differential Equations}, 27(5-6):1139--1159, 2002.

\bibitem{monti}
R.~Monti.
\newblock Distances, boundaries and surface measures in
  {C}arnot--{C}arath\'{e}odory spaces, 2001.
\newblock PhD Thesis, University of Trento.

\bibitem{Nagel}
A.~Nagel, E.~M. Stein, and S.~Wainger.
\newblock Balls and metrics defined by vector fields. {I}. {B}asic properties.
\newblock {\em Acta Math.}, 155(1-2):103--147, 1985.

\bibitem{NewSu}
R.~T. Newcomb, II and J.~Su.
\newblock Eikonal equations with discontinuities.
\newblock {\em Differential Integral Equations}, 8(8):1947--1960, 1995.

\bibitem{Pansu}
P.~Pansu.
\newblock M\'{e}triques de {C}arnot--{C}arath\'{e}odory et quasiisom\'{e}tries
  des espaces sym\'{e}triques de rang un.
\newblock {\em Ann. of Math. (2)}, 129(1):1--60, 1989.

\bibitem{PVW}
A.~Pinamonti, S.~Verzellesi, and C.~Wang.
\newblock The {A}ronsson equation for absolute minimizers of supremal
  functionals in {C}arnot--{C}arath\'{e}odory spaces.
\newblock {\em Bull. Lond. Math. Soc.}, 55(2):998--1018, 2023.

\bibitem{MR3587666}
F.~Serra~Cassano.
\newblock Some topics of geometric measure theory in {C}arnot groups.
\newblock In {\em Geometry, analysis and dynamics on sub-{R}iemannian
  manifolds. {V}ol. 1}, EMS Ser. Lect. Math., pages 1--121. Eur. Math. Soc.,
  Z\"{u}rich, 2016.

\bibitem{s02}
P.~Soravia.
\newblock Boundary value problems for {H}amilton--{J}acobi equations with
  discontinuous {L}agrangian.
\newblock {\em Indiana Univ. Math. J.}, 51(2):451--477, 2002.

\bibitem{soravia}
P.~Soravia.
\newblock Viscosity and almost everywhere solutions of first-order
  {C}arnot--{C}arath\`eodory {H}amilton--{J}acobi equations.
\newblock {\em Boll. Unione Mat. Ital. (9)}, 3(2):391--406, 2010.

\bibitem{Strof}
B.~Stroffolini.
\newblock Homogenization of {H}amilton--{J}acobi equations in {C}arnot groups.
\newblock {\em ESAIM Control Optim. Calc. Var.}, 13(1):107--119, 2007.

\end{thebibliography}

\end{document}